\numberwithin{equation}{subsection}
\theoremstyle{plain}
\newtheorem{theorem}[subsubsection]{Theorem}
\newtheorem{proposition}[subsubsection]{Proposition}
\newtheorem{lemma}[subsubsection]{Lemma}
\newtheorem{corollary}[subsubsection]{Corollary}
\theoremstyle{definition}
\newtheorem{definition}[subsubsection]{Definition}
\theoremstyle{remark}
\newtheorem{example}[subsubsection]{Example}
\newtheorem{remark}[subsubsection]{Remark}
\newcommand*{\relrelbarsep}{.386ex}
\newcommand*{\relrelbar}{%
  \mathrel{%
    \mathpalette\@relrelbar\relrelbarsep
  }%
}
\newcommand*{\@relrelbar}[2]{%
  \raise#2\hbox to 0pt{$\m@th#1\relbar$\hss}%
  \lower#2\hbox{$\m@th#1\relbar$}%
}
\providecommand*{\rightrightarrowsfill@}{%
  \arrowfill@\relrelbar\relrelbar\rightrightarrows
}
\providecommand*{\leftleftarrowsfill@}{%
  \arrowfill@\leftleftarrows\relrelbar\relrelbar
}
\providecommand*{\xrightrightarrows}[2][]{%
  \ext@arrow 0359\rightrightarrowsfill@{#1}{#2}%
}
\providecommand*{\xleftleftarrows}[2][]{%
  \ext@arrow 3095\leftleftarrowsfill@{#1}{#2}%
}
\renewcommand{\1}{\mathbbm{1}}
\DeclareFontFamily{OT1}{wncyi}{}
\DeclareFontShape{OT1}{wncyi}{m}{it}{
<5> <6> <7> <8> <9> gen * wncyi
<10> <10.95> <12> <14.4> <17.28> <20.74> <24.88> wncyi10
}{}
\DeclareSymbolFont{cyrletters}{OT1}{wncyi}{m}{it}
\DeclareSymbolFontAlphabet{\cyrmath}{cyrletters}
\DeclareMathSymbol{\rE}{\cyrmath}{cyrletters}{003}
\DeclareMathSymbol{\rD}{\cyrmath}{cyrletters}{068}
\DeclareMathSymbol{\rG}{\cyrmath}{cyrletters}{017}
\DeclareMathSymbol{\rI}{\cyrmath}{cyrletters}{073}
\DeclareMathSymbol{\rL}{\cyrmath}{cyrletters}{076}
\DeclareMathSymbol{\rZ}{\cyrmath}{cyrletters}{090}
\renewcommand{\phi}{\varphi}
\newcommand{\Oc}{\mathcal{O}}
\newcommand{\Dc}{\mathcal{D}}
\newcommand{\Bc}{\mathcal{B}}
\newcommand{\Mc}{\mathcal{M}}
\newcommand{\Vc}{\mathcal{V}}
\newcommand{\Mrm}{\mathrm{M}}
\newcommand{\Drm}{\mathrm{D}}
\renewcommand{\O}{\mathrm{O}}
\newcommand{\B}{\mathbb{B}}
\newcommand{\Mfk}{\mathfrak{M}}
\newcommand{\Sc}{\mathcal{S}}
\newcommand{\Kc}{\mathcal{K}}
\newcommand{\Ec}{\mathcal{E}}
\newcommand{\cof}{\mathrm{cof}}
\newcommand{\Obf}{\mathbf{O}}
\newcommand{\Cbf}{\mathbf{C}}
\newcommand{\Rbf}{\mathbf{R}}
\newcommand{\Kbf}{\mathbf{K}}
\newcommand{\KMbf}{\mathbf{KM}}
\newcommand{\HCbf}{\mathbf{HC}}
\newcommand{\Lbf}{\mathbf{L}}
\newcommand{\Zbf}{\mathbf{Z}}
\newcommand{\Abf}{\mathbf{A}}
\newcommand{\Qbf}{\mathbf{Q}}
\newcommand{\Hbf}{\mathbf{H}}
\newcommand{\F}{\mathbb{F}}
\newcommand{\Rc}{\mathcal{R}}
\newcommand{\Gc}{\mathcal{G}}
\newcommand{\Xc}{\mathcal{X}}
\newcommand{\Cc}{\mathcal{C}}
\newcommand{\Uc}{\mathcal{U}}
\newcommand{\Ac}{\mathcal{A}}
\newcommand{\Db}{\mathbb{D}}
\newcommand{\A}{\mathbb{A}}
\newcommand{\Lb}{\mathbb{L}}
\renewcommand{\C}{\mathbb{C}}
\newcommand{\R}{\mathbb{R}}
\newcommand{\Q}{{\mathbb{Q}}}
\newcommand{\Z}{\mathbb{Z}}
\newcommand{\N}{\mathbb{N}}
\newcommand{\G}{\mathbb{G}}
\newcommand{\Sb}{\mathbb{S}}
\newcommand{\Pb}{\mathbb{P}}
\newcommand{\Mb}{\mathbb{M}}
\newcommand{\Tc}{\mathcal{T}}
\newcommand{\Sp}{\textsc{Sp}}
\newcommand{\QCoh}{\textsc{QCoh}}
\newcommand{\DQCoh}{\textsc{DQCoh}}
\newcommand{\Spec}{\mathrm{Spec}}
\newcommand{\Core}{\mathrm{Core}}
\newcommand{\Gap}{\mathrm{Gap}}
\newcommand{\Arr}{\textsc{Arr}}
\newcommand{\HH}{\mathrm{HH}}
\newcommand{\KM}{\mathrm{KM}}
\newcommand{\Rings}{\textsc{Rings}}
\newcommand{\CatRig}{\textsc{CatRig}}
\newcommand{\CatAnRig}{\textsc{CatAnRig}}
\newcommand{\coker}{\mathrm{coker}}
\newcommand{\Mod}{\textsc{Mod}}
\newcommand{\Shv}{\textsc{Shv}}
\newcommand{\Perf}{\textsc{Perf}}
\newcommand{\PreShv}{\textsc{PreShv}}
\newcommand{\AnSm}{\textsc{AnSm}}
\newcommand{\ProjAn}{\textsc{ProjAn}}
\newcommand{\An}{\textsc{An}}
\newcommand{\Anrm}{\mathrm{An}}
\newcommand{\SH}{\textsc{SH}}
\newcommand{\RigSH}{\mathrm{RigSH}}
\newcommand{\ind}{\textrm{ind-}}
\newcommand{\pro}{\textrm{pro-}}
\newcommand{\Rat}{\mathrm{Rat}}
\newcommand{\Hom}{\mathrm{Hom}}
\newcommand{\Stab}{\mathrm{Stab}}
\newcommand{\Map}{\mathrm{Map}}
\newcommand{\Ber}{\mathrm{Ber}}
\newcommand{\uHom}{\underline{\mathrm{Hom}}}
\newcommand{\Var}{\textsc{Var}}
\newcommand{\Lotimes}{\overset{\Lb}{\otimes}}
\renewcommand{\1}{\mathbbm{1}}
\newcommand{\Mon}{\textsc{Mon}}
\DeclareMathOperator*{\colim}{\mathrm{colim}}
\DeclareMathOperator*{\holim}{\mathrm{holim}}
\DeclareMathOperator*{\hocolim}{\mathrm{hocolim}}
\newcommand{\Func}{\mathrm{Func}}
\renewcommand{\Proj}{\mathrm{Proj}}
\newcommand{\Gr}{\mathrm{Gr}}
\newcommand{\ib}{\mathrm{ib}}
\newcommand{\Aff}{\textsc{Aff}}
\newcommand{\Grpd}{\textsc{Grpd}}
\newcommand{\Sch}{\textsc{Sch}}
\newcommand{\RatAlg}{\textsc{RatAlg}}
\newcommand{\DAlg}{\textsc{DAlg}}
\newcommand{\DAff}{\textsc{DAff}}
\newcommand{\DAn}{\textsc{DAn}}
\newcommand{\Algrm}{\textrm{Alg}}
\newcommand{\Ringrm}{\textrm{Ring}}
\newcommand{\Alg}{\textsc{Alg}}
\newcommand{\Sets}{\textsc{Sets}}
\newcommand{\SNRings}{\textsc{SNRings}}
\newcommand{\SNMod}{\textsc{SNMod}}
\newcommand{\BanRings}{\textsc{BanRings}}
\newcommand{\BanMod}{\textsc{BanMod}}
\newcommand{\SSets}{\textsc{SSets}}
\newcommand{\End}{\mathrm{End}}
\newcommand{\Cat}{\textsc{Cat}}
\newcommand{\id}{\mathrm{id}}
\renewcommand{\ch}{\mathrm{ch}}
\newcommand{\Td}{\mathrm{Td}}
\newcommand{\gr}{\mathrm{gr}}
\newcommand{\Tr}{\mathrm{Tr}}
\newcommand{\Gal}{\mathrm{Gal}}
\newcommand{\DA}{\mathrm{DA}}
\newcommand{\KV}{\mathrm{KV}}
\newcommand{\BGLbf}{\mathbf{BGL}}
\newcommand{\HC}{\mathrm{HC}}
\newcommand{\GL}{\mathrm{GL}}
\newcommand{\Sh}{\mathrm{Sh}}
\newcommand{\DR}{\mathrm{DR}}
\renewcommand{\H}{\textsc{H}}
\title{Overconvergent global analytic geometry}
\author{Fr\'ed\'eric Paugam}
\begin{document}


\maketitle

\begin{flushright}
``The rain has stopped, the clouds have drifted away,\\
\;and the weather is clear again.''\\
-- Ry\={o}kan, {\it One robe, one bowl}
\end{flushright}

\begin{abstract}
We define a notion of global analytic space with overconvergent structure sheaf.
This gives an analog on a general base Banach ring of Gro{\ss}e-Kl\"onne's
overconvergent $p$-adic spaces and of Bambozzi's generalized affinoid varieties over $\R$.
This also gives an affinoid version of Berkovich's and Poineau's global
analytic spaces. This affinoid approach allows the introduction
of a notion of strict global analytic space, that has some relations with the ideas of Arakelov
geometry, since the base extension along the identity morphism on $\Z$ (from
the archimedean norm to the trivial norm) sends a strict global analytic space to a usual
scheme over $\Z$, that we interpret here as a strict analytic space over $\Z$ equipped
with its trivial norm. One may also interpret some particular analytification functors as mere
base extensions.
We use our categories to define overconvergent motives and an overconvergent
stable homotopy theory of global analytic spaces.
These have natural Betti, de Rham and pro-\'etale realizations.
Motivated by problems in global Hodge theory and integrality questions in the theory of special
values of arithmetic $L$-functions, we also define derived overconvergent global analytic spaces
and their (derived) de Rham cohomology. Finally,
we use Toen and Vezzosi's derived geometric methods to define a natural (integral)
Chern character on analytic Waldhausen's $K$-theory with values in analytic cyclic homology.
The compatibility of our constructions with Banach base extensions gives
new perspectives both on global analytic spaces and on the various realizations
of the corresponding motives.
\end{abstract}

\newpage
\tableofcontents
\newpage

\section{Introduction}
Berkovich defined analytic spaces over a general Banach ring $(R,|\cdot|_R)$ in \cite{Berkovich1},
and Poineau studied them in a refined way in \cite{Poineau1} and \cite{Poineau2}.
Their local models are given by coherent sheaves of ideals in the ring of analytic functions
on an open subset of the analytic affine space $\A^n_{(R,|\cdot|_R)}$. This affine space
is given by the space of multiplicative seminorms
$$|\cdot|:R[X_1,\dots,X_n]\to \R_+$$
on the polynomial ring whose restriction to $R$ is bounded by $|\cdot|_R$, and
equipped with its natural sheaf of analytic functions, defined as local uniform limits
of rational functions without poles.

On a $p$-adic field, it is convenient to work with a refined notion of analytic space,
that was also defined by Berkovich in \cite{Berkovich-etale-cohomology},
using building blocs similar to those of Tate's rigid analytic geometry \cite{Tate2}:
affinoid algebras.
These are given by quotients of rings of power series $\Q_p\{\rho^{-1} T\}$
that converge (in a $p$-adic sense) on a given polydisc of arbitrary positive real
radius $\rho$ (Tate's theory reduces to $\rho=1$, but this is not so well adapted
to trivially valued fields, or to more general Banach rings, like the Banach ring
$\Zbf=(\Z,|\cdot|_\infty)$ of integers with its archimedean absolute value).

The natural question that we answer in this paper is to give a setting for global analytic
geometry that allows to change the base Banach ring along a bounded morphism
$\phi:(A,|\cdot|_A)\to (B,|\cdot|_B)$, and also to work with a notion of
affinoid algebra over a general base Banach ring, in a way that is
compatible to Berkovich's original constructions when the base Banach ring
is $\Qbf_p=(\Q_p,|\cdot|_p)$.

If we want to get back Berkovich's analytic spaces over $\Qbf_p$, we have to
work with uniform base Banach rings, given by Banach rings whose seminorm
is power-multiplicative. Indeed, in this setting, we will have
$$\Qbf_p\{\rho^{-1}T\}\cong \Zbf\{\rho^{-1}T\}\otimes_\Zbf\Qbf_p,$$
where $\otimes$ denotes the coproduct of uniform Banach rings, and we do
the base change along the bounded morphism $\Zbf=(\Z,|\cdot|_\infty)\to (\Q_p,|\cdot|_p)=\Qbf_p$.

It was already quite clear in Berkovich's original work \cite{Berkovich1} that, if the
base Banach ring is $\Cbf=(\C,|\cdot|_\infty)$, the naive definition of affinoid spaces
over $\Cbf$ would lead to important difficulties: the uniform ring of convergent power series on
the unit disc identifies with the ring of continuous functions on the disc that
are holomorphic in its interior. A simple quotient of such a ring will give
the ring of all continuous functions on the unit circle, that is not something
we would like to call an affinoid algebra over $\Cbf$. 
As shown by Bambozzi in his thesis \cite{Bambozzi}, rings of overconvergent power
series on closed polydiscs over an archimedean field are better behaved than usual rings
of convergent power series, and allow the definition of a complete archimedean analog 
of Berkovich's $p$-adic analytic spaces. They will also allow us, in the $p$-adic setting,
to circumvent in a very natural way all the boundary-related difficulties that appear in
Berkovich's theory.

It is known since the work of Monsky and Washnitzer \cite{Monsky-Washnitzer} that
the de Rham cohomology of non-proper $p$-adic analytic spaces does not work well with convergent
power series, because the Poincar\'e Lemma fails in this setting. Its correct formulation
must be done in terms of rings of overconvergent power series (Gro{\ss}e-Kl\"onne subsequently
developed the overconvergent analogs of rigid analytic varieties in \cite{Grosse-Kloenne},
and studied their de Rham cohomology in \cite{Grosse-Kloenne2} and \cite{Grosse-Kloenne3}).

The ring $R\{\rho^{-1}T\}^\dagger$ of overconvergent power series of radius $\rho$ is
a formal filtered colimit of the uniform Banach rings $R\{\nu^{-1}T\}$
of convergent power series of radii $\nu>\rho$. The natural setting
to study such formal filtered colimits is the setting of uniform ind-Banach rings.
We will thus define overconvergent power series rings as particular kinds of
uniform ind-Banach rings.

However, a uniform Banach ring is necessarily reduced. This prevents us from
using nilpotent elements in uniform ind-Banach rings to formulate
differential calculus algebraically, as one does in scheme theory and in complex
analytic geometry.

We thus need to define a category of overconvergent analytic rings over a given
Banach ring $(R,|\cdot|_R)$ that contains the rings of overconvergent power series,
but that also allows us to use nilpotent elements. The construction of this
``completion'' of the category of overconvergent power series rings is done in
a way similar to the one used by Lawvere in synthetic differential geometry
\cite{Lawvere-categorical-dynamics}, by Dubuc and Zilber in synthetic analytic
geometry \cite{Dubuc-Zilber} and by Lurie in derived geometry in \cite{Lurie-DAG-V},
using ``functors of functions''.
The advantage of this categorical approach to analytic rings is that it generalizes
directly to the derived setting, and allows a natural (i.e., functorial in the $\infty$-categorical
sense) definition of (derived) de Rham cohomology for non-smooth spaces.
Remark that there is also in \cite{Ben-Bassat-Kremnitzer} an approach to
non-archimedean analytic geometry using a geometry relative to the
symmetric monoidal category of Banach spaces.

Once a convenient category of overconvergent rings is defined, it is easy to
use the ``functor of point'' approach to define a natural notion of overconvergent
analytic space over a given Banach ring.

The great interest of this affinoid approach to global analytic geometry is that
it allows the definition of strict global analytic spaces that is stable by base
extension along a bounded morphism of Banach ring. For example,
strict dagger analytic spaces over over $(\Z,|\cdot|_\infty)$ have a
base extension to $(\Z,|\cdot|_0)$ given by usual schemes. Such strict global
analytic models for schemes over $\Z$ may be considered as
``Arakelov type'' models. This will be discussed further in Section
\ref{Dagger-analytic-Arakelov}. 

Using this new setting for global analytic geometry, we will define in Section
\ref{global-analytic-motives} various cohomological invariants, as \'etale,
analytic motivic cohomology, and global analytic $K$-theory. We will explain the relation
of these invariants to the ones that were already developed before in the
theory of schemes, that is given in our theory by a mere base extension
along a bounded morphism of Banach rings, in the case of schemes
that admit ``Arakelov type'' models in our sense.

Finally, motivated by applications to global comparison isomorphisms between
\'etale cohomology and de Rham cohomology, and by integrality questions in
the theory of special values of $L$-functions, we will define global analytic
derived analytic spaces, their derived de Rham cohomology, and their
cyclic Chern character.

{\flushleft{\it\bf Acknowledgments: }}

During the preparation of this work, the author was supported by the university
Pierre and Marie Curie and the ANR project ``Espaces de Berkovich globaux''.
I thank J. Ayoub, F. Bambozzi, O. Ben-Bassat, B. Bhatt, B. Conrad, D.-C. Cisinski, F. Deglise,
B. Drew, A. Ducros, M. Flach, E. Gro{\ss}e-Kl\"onne, F. Ivorra, M. Karoubi, K. Kedlaya,
T. Lemanissier, R. Liu, F. Loeser, B. Morin, J. Poineau, M. Porta, J. Riou, M. Robalo, G. Tamme,
M. Temkin, P. Schapira, J. Scholbach, P. Scholze and A. Vezzani for useful discussions.
Special thanks are due to \foreignlanguage{russian}{мудрец}.
 
\section{Seminormed algebraic objects}
\subsection{$\R_+$-graded sets and seminorms}
\label{R-plus-graded-sets}
Let $\R_+:=\R_{\geq 0}$ be the set of positive real numbers.
We will now define various categories of $\R_+$-graded sets that
give the natural setting for the theory of seminormed algebraic objects.
\begin{definition}
An \emph{$\R_+$-graded set} is a pair $(X,|\cdot|_X)$ composed of
a set $X$ and a map $|\cdot|_X:X\to \R_+$. A \emph{graded
map} (resp. \emph{contracting map}, resp. \emph{bounded map}) of
$\R_+$-graded set is a map $f:X\to Y$ such that
$$
\begin{array}{c}
|f(x)|_Y=|x|_X\\
\textrm{(resp. }|f(x)|_Y\leq |x|_X \textrm{ for all $x\in X$,}\\
\textrm{resp. there exists $C>0$ such that }|f(x)|_Y\leq C\cdot |x|_X\textrm{ for all $x\in X$).}
\end{array}
$$
The category of $\R_+$-graded sets with graded (resp. contracting, resp. bounded) maps
is denoted $\R_+^\Sets$ (resp. $\R_{+\leq 1}^\Sets$, resp. $\R_{+\leq}^\Sets$).
\end{definition}

\begin{lemma}
The category $\R_+^\Sets$ has arbitrary limits and colimits and has internal homomorphisms
for the product monoidal structure.
The category $\R_{+\leq 1}^\Sets$ has arbitrary colimits,
and in particular, the degree zero set $\{0_0\}$ as terminal object.
The category $\R_{+\leq}^\Sets$ has finite colimits and in particular
$\{0_0\}$ as a terminal object. More generally, $\R_{+\leq}^\Sets$ has
uniformly bounded colimits, meaning that if $X:I\to \R_{+\leq}^\Sets$ is a uniformly bounded
diagram (i.e., a diagram such that there exists $C$ such that for all $\phi:i\to j$ in $I$,
$|X(\phi)(x)|_{X_j}\leq C\cdot|x|_{X_i}$ for all $x\in X_i$) and $f:X\to Z$ is a uniformly
bounded cocone (meaning that there exists $D>0$ such that for every $i\in I$,
$f(i):X_i\to Z$ is $D$-bounded), then $\colim_{i\in I}f_i:\colim_{i\in I} X_i\to Z$ exists.
The categories $\R_{+\leq 1}^\Sets$ and $\R_{+\leq}^\Sets$ have finite limits
and internal homorphisms for the product monoidal structure.
\end{lemma}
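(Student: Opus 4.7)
The plan is to reduce all assertions to properties of the forgetful functor $U$ to $\Sets$, equipping set-theoretic (co)limits with canonical gradings dictated by universal properties. First, I would identify $\R_+^\Sets$ with the slice category $\Sets/\R_+$: an object $(X,|\cdot|_X)$ corresponds to the arrow $|\cdot|_X\colon X\to\R_+$, and grade-preserving maps correspond to morphisms over $\R_+$. Since $\Sets$ is complete, cocomplete, and locally cartesian closed, every slice $\Sets/S$ inherits arbitrary limits and colimits and is cartesian closed; this settles the assertions for $\R_+^\Sets$.

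For $\R_{+\leq 1}^\Sets$ the slice description fails because morphisms are only required to decrease grades. Given a diagram $X\colon I\to\R_{+\leq 1}^\Sets$, I would take $\colim_i U(X_i)$ in $\Sets$ and equip each class $[x]$ with the infimum grading
$$|[x]|:=\inf\{|x'|_{X_i} : x'\in X_i\text{ represents }[x]\}.$$
The structure maps are tautologically contracting, and any contracting cocone $f\colon X\to Z$ descends since the image of $[x]$ is bounded by $|x'|_{X_i}$ for every representative, hence by the infimum. For finite limits in $\R_{+\leq 1}^\Sets$ and $\R_{+\leq}^\Sets$, the limit in $\Sets$ is equipped with the pointwise supremum grading, which is finite because the diagram is finite; the terminal object in both categories is $\{0_0\}$, since any map into a singleton of grade $0$ is automatically contracting (resp. $1$-bounded).

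The clause on uniformly bounded colimits in $\R_{+\leq}^\Sets$ is the main new point, and I expect the bookkeeping of constants to be the principal obstacle. The same construction via set-theoretic colimit plus infimum grading applies: for a uniformly $D$-bounded cocone $f$, the descended map on $\colim_i X_i$ satisfies $|f([x])|_Z = |f_i(x')|_Z \leq D\,|x'|_{X_i}$ for every representative $x'$, whence $|f([x])|_Z \leq D\,|[x]|$. The uniform bound $C$ on the diagram itself ensures the transition maps admit a common boundedness constant, which is what makes this restricted colimit formalism internally coherent; finite colimits arise as the special case where $I$ is finite (take $C$ to be the maximum of the finitely many transition constants, and similarly for any finite cocone).

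Finally, for internal homs with respect to the product monoidal structure, I would define $\uHom(X,Y)$ as the set of morphisms of the appropriate type equipped with the operator grading
$$|f|:=\sup\{|f(x)|_Y/|x|_X : x\in X,\ |x|_X>0\},$$
with suitable conventions on the grade-zero fibre, and verify the exponential adjunction $\Hom(X\times Z,Y)\cong\Hom(Z,\uHom(X,Y))$ by reducing to the set-theoretic adjunction and matching the grading conditions on both sides in each of the three categories.
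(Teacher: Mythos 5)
Your treatment of limits and colimits is correct and follows essentially the same strategy as the paper: form the (co)limit on underlying sets and endow it with the grading forced by the universal property. The differences are minor but real. You identify $\R_+^\Sets$ with the slice $\Sets/\R_+$ where the paper uses the equivalent functor category $\uHom_{\Cat}(\R_{+,disc},\Sets)$; both yield completeness, cocompleteness and cartesian closedness in one stroke. For colimits in $\R_{+\leq 1}^\Sets$ and for uniformly bounded colimits in $\R_{+\leq}^\Sets$ you grade the set-theoretic colimit by the infimum over \emph{all} representatives in a single step, whereas the paper assembles colimits from coproducts (disjoint-union grading) and coequalizers (infimum over the fibre); the resulting gradings coincide, and your version handles the zigzags in the generated equivalence relation uniformly via the cocone identity $f_j\circ X(\phi)=f_i$, which is the key point in checking that the descended map is $D$-bounded. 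Your sup-graded finite limits agree, up to canonical isomorphism, with the paper's max-graded products and induced-grading equalizers, and your recovery of finite colimits as the uniformly bounded case with $C$ and $D$ the maxima of finitely many constants is exactly right.

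The one step that would not go through as written is the internal hom for the \emph{product} monoidal structure on $\R_{+\leq 1}^\Sets$ and $\R_{+\leq}^\Sets$. The operator grading $|f|=\sup_{|x|_X>0}|f(x)|_Y/|x|_X$ on the hom-set is right adjoint to the \emph{multiplicative} tensor product $\otimes_m$ (in $\R_{+\leq}^\Sets$ one has $|g(z,x)|_Y\leq C\,|z|_Z\,|x|_X$ if and only if $z\mapsto g(z,-)$ is $C$-bounded for the operator grading), but not to the categorical product, which in these two categories is the max-graded product $\otimes_\infty$. Concretely, in $\R_{+\leq 1}^\Sets$ take $X=\{x\}$ and $Y=\{y\}$ of degree $1$ and $Z=\{z\}$ of degree $0$: the unique map $Z\times X\to Y$ is contracting since $|(z,x)|=\max(0,1)=1$, yet $\uHom(X,Y)$ is a single point of operator grading $1$, so $\Hom(Z,\uHom(X,Y))$ is empty and the exponential adjunction $\Hom(Z\times X,Y)\cong\Hom(Z,\uHom(X,Y))$ fails. (The paper's own proof shares this defect: it exhibits the operator-graded hom-set but never verifies the adjunction for $\times$.) If one insists on the product monoidal structure, the correct hom-object in the contracting case has a different underlying set, namely all set maps $h:X\to Y$ admitting a constant $c$ with $|h(x)|_Y\leq\max(c,|x|_X)$ for all $x$, graded by the least such $c$; otherwise the clean fix is to prove closedness for $\otimes_m$, which is what the operator grading actually delivers and what the rest of the paper uses.
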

\begin{proof}
The category $\R_+^\Sets=\uHom_{\Cat}(\R_{+,disc},\Sets)$,
being the functor category of $\Sets$-valued functors on
the discrete category with set of objects $\R_+$, has arbitrary limits and colimits.
The set $\R_+$, graded by the identity, is its final object.
The disjoint union $\coprod_i X_i$ of  the underlying sets of a family $(X_i,|\cdot|_i)$
of $\R_+$-graded sets, equipped with the grading $\coprod |\cdot|_{X_i}$ is always
a graded and a contracting coproduct. If the family is finite, it is a bounded coproduct.
If we have an $\N$-indexed family $\{X_n\}$ of non-empty sets of degree $1$, we may
define a family of bounded maps to $Y=\R_+$ by sending $X_n$ to $n$. There is no
bounded map that extends this family to the coproduct set. From this counter-example,
we see that the obstruction to having a colimit (i.e., a coproduct) for a cocone
$f:X\to Z$, with $X:I\to \R_{+\leq}^\Sets$
a discrete diagram, disappears if the cocone is uniformly bounded.
If $f,g:(X,|\cdot|_X)\to (Y,|\cdot|_Y)$ are two parallel bounded maps, and
$\pi:Y\to Z=\coker(f,g)$ is the coequalizer of the underlying sets, one may equip it
with the grading (the infimum is that of a constant if both maps are graded)
$$|z|_Z=\inf_{y\in \pi^{-1}(z)}|y|_Y.$$
This gives a coequalizer in the categories $\R_{+\leq}^\Sets$ and
$\R_{+\leq 1}^\Sets$. This proves all the desired results about colimits.
The product $\prod_{i=1}^n X_i$ of the underlying sets of a finite family $(X_i,|\cdot|_i)_{i=1,\dots,n}$
of $\R_+$-graded sets, equipped with the grading
$$|(x_1,\dots,x_n)|:=\max_i|x_i|$$
is a product in the categories $\R_{+\leq 1}^\Sets$ and $\R_{+\leq}^\Sets$.
Given a parallel pair $f,g:(X,|\cdot|_X)\to (Y,|\cdot|_Y)$ of bounded (resp. contracting) maps,
the kernel of the pair $(f,g)$ of set maps equipped with the grading induced
by that of $X$ is a kernel for the pair $(f,g)$ in the category $\R_{+\leq}^\Sets$
(resp. $\R_{+\leq 1}^\Sets$).
Indeed, if $h:(Z,|\cdot|_Z)\to (X,|\cdot|_X)$ is a bounded (resp. contracting) map such that
$f\circ h=g\circ h$, then $h$ factors set theoretically in a unique way through the kernel,
and this factorization is bounded (resp. contracting).
The internal homomorphisms between two objects $X$ and $Y$
of $\R_{+\leq}^\Sets$ (resp. $\R_{+\leq1}^\Sets$)
are given by the set $\uHom(X,Y)$ of bounded (resp. contracting) maps with the grading given by
$$|f|_{\uHom(X,Y)}:=\inf\left\{C>0,\;|f(x)|_Y\leq C\cdot |x|_X\textrm{ for all $x\in X$}\right\}.$$
\end{proof}

The set $\R_+$ has a natural family of commutative monoid structures indexed by $p\in ]0,+\infty]$
given by $(+_p,0)$, where
$$r+_p s:=\sqrt[p]{r^p+s^p}$$
for $p<+\infty$ and
$$r+_\infty s:=\max(r,s).$$
Remark that we have $x+_p s\leq r+_{p'}s$ if $p'\leq p$.
One also has the multiplicative monoid structure $(\times,1)$,
given by $r\times s:=r.s$.
We now define the associated symmetric monoidal structures on $\R_+$-graded sets.
\begin{definition}
The \emph{multiplicative (resp. $p$-additive, resp. maximum) tensor product} of
two $\R_+$-graded sets $(X,|\cdot|_X)$ and $(Y,|\cdot|_Y)$ is the product $X\times Y$ equipped
with the grading
$$
\begin{array}{c}
|(x,y)|_{X\otimes_m Y}:=|x|_X\cdot|y|_Y\\
\textrm{(resp. }|(x,y)|_{X\otimes_p Y}:=|x|_X+_p |y|_Y,\\
\textrm{resp. }|(x,y)|_{X\otimes_\infty Y}:=\max(|x|_X,|y|_Y)).
\end{array}
$$
\end{definition}
The three tensor products give monoidal structures on the categories $\R_+^\Sets$,
$\R_{+\leq}^\Sets$ and $\R_{+\leq 1}^\Sets$. The unit object of the multiplicative
monoidal structures is the one element set $\{1_1\}$. The unit object of the
$p$-additive and maximum monoidal structure is the one element set $\{0_0\}$.

Remark that the maximal tensor product $\otimes_\infty$ is equal to the
product $\times$ in the categories $\R_{+\leq}^\Sets$ and
$\R_{+\leq1}^\Sets$. 

\begin{definition}
\begin{enumerate}
\item A \emph{weakly seminormed additive monoid (resp. abelian group)}
is a commutative monoid (resp. an abelian group) object in
the symmetric monoidal category $(\R_{+\leq}^\Sets,\otimes_\infty,\{0_0\})$. More concretely,
this is a monoid (resp. an abelian group) $(M,+,0)$ equipped with a grading $|\cdot|_M$ with
$|0|_M=0$, and such that there exists $C>0$ with
$$|m+n|\leq C\cdot \max(|m|,|n|) \textrm{ (resp. $|m-n|\leq C\cdot \max(|m|,|n|)$).}$$
\item A \emph{seminormed monoid (resp. abelian group)} is a commutative monoid
(resp. an abelian group) object in the symmetric
monoidal category $(\R_{+\leq 1}^\Sets,\otimes_1,\{0_0\})$. More concretely, this
is a monoid (resp. an abelian group) $(M,+,0)$ equipped with a grading $|\cdot|_M$ such that
$|0|_M=0$ and
$$|m+n|\leq |m|+|n| \textrm{ (resp. $|m-n|\leq |m|+|n|$).}$$
\item A \emph{weakly seminormed multiplicative monoid} is a commutative monoid object
in the symmetric monoidal category $(\R_{+\leq}^\Sets,\otimes_m,\{1_1\})$. More concretely,
this is a monoid $(M,\times,1)$ equipped with a grading $|\cdot|_M$ such that there exists $C>0$
with
$$|m\cdot n|_M\leq C\cdot|m|_M\cdot|n|_M.$$
\item A \emph{seminormed multiplicative monoid} is a commutative monoid object
in the symmetric monoidal category $(\R_{+\leq 1}^\Sets,\otimes_m,\{1_1\})$. More concretely,
this is a monoid $(M,\times,1)$ equipped with a grading $|\cdot|_M$ such that $|1|_M\leq 1$
and
$$|m\cdot n|_M\leq |m|_M\cdot|n|_M.$$
\end{enumerate}
\end{definition}

If $(\Cc,\otimes)$ is one of the above symmetric monoidal categories, we will
denote $\Mon(\Cc,\otimes)$ the category of (commutative) monoids in $\Cc$.
There is a natural forgetful functor
$$\Mon(\Cc,\otimes)\to \Cc.$$

Remark that if $(M,|\cdot|_M)$ is a weakly seminormed abelian group
(resp. a weakly seminormed multiplicative monoid)
then $(M,|\cdot|_M^t)$ is also a weakly seminormed abelian group
(resp. multiplicative monoid) for every $t>0$.

\begin{definition}
A \emph{weakly seminormed ring (resp. seminormed ring)} is a tuple
$(R,|\cdot|_R,\times,+)$ composed of a weakly seminormed (resp. seminormed)
abelian group $(R,|\cdot|_R,+)$ and a weakly seminormed (resp. seminormed)
multiplicative monoid $(R,|\cdot|_R,\times)$ such that $(R,+,\times)$ is a ring
in the classical sense. More concretely, an $\R_+$-graded set $(R,|\cdot|_R)$
equipped with a ring structure $(R,\times,+)$ such that $|0|_R=0$ is a weakly seminormed ring
if there exists $C>0$ and $D>0$ such that
$$|a-b|_R\leq C\cdot\max(|a|_R,|b|_R)\;\textrm{ and }\;|a\cdot b|_R\leq D\cdot |a|_R\cdot|b|_R.$$
It is a seminormed ring if we can choose $C=2$, $D=1$, we have $|1|_R\leq 1$,
and we further have
$$|a-b|_R\leq |a|_R+|b|_R.$$
\end{definition}

By definition, a weakly seminormed ring has an underlying $\R_+$-graded set
in $\R_{+\leq}^\Sets$, whereas a seminormed ring has an underlying $\R_+$-graded
set in $\R_{+\leq 1}^\Sets$.

A seminormed ring is called complete (or a Banach ring) if it is complete for
the topology induced by its seminorm.
We will denote $\SNRings$ (resp. $\BanRings$) the category of seminormed
(resp. complete seminormed) rings.

\subsection{Seminormed modules}
\begin{definition}
Let $(R,|\cdot|_R)$ be a seminormed ring. A \emph{seminormed module} over $(R,|\cdot|_R)$ is
a module over $(R,|\cdot|_R)$ in  $(\R_{+\leq 1}^\Sets,\otimes_1,\otimes_m)$. More
concretely, this is a seminormed
abelian group $(M,+,|\cdot|_M)$ together with a multiplication map
$\cdot:R\times M\to M$ that makes $M$ an $R$-module in the usual sense
and such that
$$|a\cdot m|_M\leq |a|_A\cdot |m|_M.$$
We will denote $\SNMod(R,|\cdot|_R)$ the category of modules over
$(R,|\cdot|_R)$
\end{definition}

Let $(R,|\cdot|_R,+,\times)$ be a seminormed ring,
and let $X$ be an object in $\R_{+\leq1}^\Sets$.
One may equip the set
$$R^{(X)}:=\Hom_{\Sets-fs}(X,R)$$
of all finitely supported maps from $X$ to $R$ with the $\ell^1$-grading given by
$$\left\|\sum_X a_x\{x\}\right\|_1:=\textstyle\sum |a_x|_R\cdot |x|_X.$$
This gives a seminormed $R$-module structure $(R^{(X)},\|\cdot\|_1)$
called the free seminormed module on $R$.
If $f:X\to Y$ is a morphism in $\R_{+\leq 1}^\Sets$, and
$$f_*:R^{(X)}\to R^{(Y)}$$
is the associated module map, given by
$$(f_*a)_y:=\sum_{x\in X,\;f(x)=y} a_x,$$
then we have
$$
\|f_* a\|_1:=\left\|\sum_{y\in Y}\left(\sum_{f(x)=y} a_x\right)\{y\}\right\|_1=
\sum_{y\in Y}\left|\sum_{f(x)=y} a_x\right|_R\cdot |y|_Y\leq 
\sum_{y\in Y}\sum_{f(x)=y} (|a_x|_R\cdot |f(x)|_Y)
$$
so that
$$
\|f_*a\|_1\leq \sum_{y\in Y}\sum_{f(x)=y} (|a_x|_R\cdot |x|_X)= \sum_{x\in X}|a_x|_R\cdot |x|_X=
\|a\|_1,
$$
which implies that $f_*$ is a morphism in $\R_{+\leq 1}^\Sets$, so that
$X\mapsto R^{(X)}$ gives a functor
$$R^{(\cdot)}:\R_{+\leq 1}^\Sets\longrightarrow \SNMod(R,|\cdot|_R).$$
The free seminormed module $R^{(X)}$ on an $\R_+$-graded set $X$ has the following
universal property: if $f:X\to M$ is a contracting morphism from $X$ to a seminormed
$R$-module, there exists a unique extension of $f$ to a morphism of seminormed $R$-modules
$$[f]:R^{(X)}\to M.$$
The extension is given by $[f](a)=\sum_x a_x\cdot f(x)$. It is a contracting
map since
$$
|f(a)|_M=|\sum_x a_xf(x)|_M\leq
\sum_x |a_x|_R\cdot |f(x)|_M\leq \sum_x|a_x|_R\cdot|x|_X=
\|a\|_1.
$$
Composition with the forgetful functor $\SNMod(R,|\cdot|_R)\to \R_{+\leq 1}^\Sets$
thus gives an endofunctor
$$\Sigma^m_R:\R_{+\leq 1}^\Sets\to \R_{+\leq 1}^\Sets$$
which is monadic (using the usual composition of linear combinations).
One may recover $R$ from $\Sigma^m_R$ by setting $R=\Sigma_R(\{1_1\})$.
This gives an embedding $R\mapsto \Sigma^m_R$ of seminormed rings into
monads in $\R_{+\leq 1}^\Sets$, similar to the embedding of usual rings
in monads in $\Sets$, used by Durov \cite{Durov-2007} in his theory of
generalized rings.

If $R$ is a Banach ring, a seminormed module over $R$ is called a Banach module
if the underlying abelian group is complete for the topology induced by its group
seminorm. We denote $\BanMod(R,|\cdot|_R)$ the category of Banach modules over $R$.
There is a natural completion functor
$$\hat{-}:\SNMod(R,|\cdot|_R)\to \BanMod(R,|\cdot|_R),$$
and the composition of the free seminormed module functor
$R^{(\cdot)}$ with $\hat{-}$ gives an endofunctor
$$\widehat{\Sigma}^m_R:\R_{+\leq 1}^\Sets\to \R_{+\leq 1}^\Sets$$
that is monadic. One may still recover $R$ from $\widehat{\Sigma}^m_R$ by
setting $R=\widehat{\Sigma}^m_R(\{1_1\})$.

One may wonder what happens to these monadic embeddings if we work in the
category $\R_{+\leq}^\Sets$ of $\R_+$-graded sets with bounded maps, and with
a weakly seminormed ring $(R,|\cdot|_R,+,\times)$.
We will denote $C$ the norm of the map $(x,y)\mapsto x+y$
and $D$ the norm of the multiplication map $(x,y)\mapsto x\cdot y$ on $R$.
In this setting, the free $R$-module $R^{(X)}$ on an $\R_+$-graded set $X$,
defined as the set of finitely supported maps $a:X\to R$, gives a functor
$$R^{(\cdot)}:\R_{+\leq}^\Sets\to \Mod(R)$$
with values in (non-graded) $R$-modules.
One may define the $\ell^\infty$ grading on $R^{(X)}$ by setting
$$\left|\sum_{x\in X} a_x\{x\}\right|_\infty:=\max_{x\in X}(|a_x|_R\cdot |x|_X).$$
The problem is that for $f:X\to Y$ a bounded map, the associated map
$$f_*:R^{(X)}\to R^{(Y)}$$
given by
$$f_*a_y:=\sum_{x\in X,\;f(x)=y} a_x$$
is not anymore bounded in general. Similarly, the composition of linear combinations
$$\mu:R^{\left(R^{(X)}\right)}\to R^{(X)}$$
is often not bounded. This shows that $X\mapsto R^{(X)}$ induces only a monad
$\Sigma_R^m:\R_{+\nleq}^\Sets\to \R_{+\nleq}^\Sets$ in the category $\R_{+\nleq}^\Sets$ of
$\R_+$-graded sets with unbounded (i.e., arbitrary) maps, not in the
category $\R_{+\leq}^\Sets$.
If $f:R\to S$ is a bounded morphism of weakly seminormed rings with norm $C_f$,
and $X$ is an $\R_+$-graded set, we get a natural map
$$[f]:R^{(X)}\to S^{(X)}$$
that is $\R_+$-bounded for the $\ell^\infty$-gradings.
Indeed, we have
$$
|[f](a)|_\infty=|\sum_x f(a_x)\{x\}|_\infty:=\max_x(|f(a_x)|_S\cdot |x|_X)\leq
C_f\cdot \max_x(|a_x|_R\cdot |x|_X)=C_f\cdot |a|_\infty.
$$
We thus have a natural embedding $R\mapsto \Sigma_R^m$
of the category of weakly seminormed rings
into the category of monads on $\R_{+\nleq}^\Sets$ with morphisms given by
monad morphisms $f:\Sigma\to \Sigma'$ such that for all $\R_+$-graded
set, $f_X:\Sigma(X)\to \Sigma'(X)$ is bounded.

Even if $\Sigma_R^m$ is not a monad in $\R_{+\leq}^\Sets$, it has a complete $\R_+$-filtration
by a natural family of monads on $\R_{+\leq}^\Sets$, defined in the following way:
for $\rho\geq 0$, let $R^{(X)}_{\leq \rho}\subset R^{(X)}$ be the subset
of finitely supported maps $a:X\to R$ such that
for every partition $\coprod_i Z_i=Z$ of a subset $Z$ of $X$, we have
$$\left|\sum_{z\in Z} a_z\right|_R\leq \rho\cdot\max_i\left(\left|\sum_{z\in Z_i}a_{z}\right|_R\right),$$
together with the grading induced by the $\ell^\infty$ grading on $R^{(X)}$.
If $\nu\leq \rho$, we will have
$$R^{(X)}_{\leq \nu}\subset R^{(X)}_{\leq \rho}.$$
Moreover, the free module $R^{(X)}$ may be described as the union
$$R^{(X)}=\cup_{\rho\to \infty}R^{(X)}_{\leq \rho}.$$
If $t>0$ and $\rho>0$, we will have
$$(R,|\cdot|_R)^{(X)}_{\leq \rho}=(R,|\cdot|_R^t)^{(X)}_{\leq \rho^t}.$$
Remark that by definition, the natural embedding $R^{(X)}_{\leq \rho} \hookrightarrow R^{(X)}$
is a bijection if $\rho\geq C$ and $X$ has a cardinal in $\{0,1,2\}$.
Remark also that $R^{(X)}_{\leq \rho}$ is not an $R$-module in general.
For example, if $(R,|\cdot|_R)=(\Z,|\cdot|_\infty)$, $\rho=C=2$,
and $X=\{1,2,3\}$, we get that $\Z^{(X)}_{\leq 2}\subset \Z^3$ is the set of triples
$(n_1,n_2,n_3)$ of integers such that
$$|n_1|_\infty+|n_2|_\infty+|n_3|_\infty\leq 2\cdot\max_i(|n_i|_\infty),$$
and this is not stable by addition, because $(1,1,2)$ and $(0,0,-1)$ are in it, but not
$(1,1,1)$. We will see that in spite of these defects, the $\R_{+\nleq}^\Sets$-monad structure
on $X\mapsto R^{(X)}$ given by the composition of linear combinations
extends to an $\R_{+\leq}^\Sets$ monad structure on each $R_{\leq \rho}^{(X)}$.
If $f:(X,|\cdot|_X)\to (Y,|\cdot|_Y)$ is a bounded map of norm $\|f\|=C_f$,
then the map
$$f_*:R^{(X)}_{\leq\rho} \to R^{(Y)}_{\leq\rho}$$
given by
$$(f_*a)_y:=\sum_{x\in X,\;f(x)=y} a_x$$
is well defined. Indeed, a partition $\coprod_i Z_i'$ of a subset $Z'$ of $Y$ gives
a partition of the subset $Z=f^{-1}(Z')$ of $X$ by $Z_i=f^{-1}(Z'_i)$, so that
$$
\left|\sum_{z'\in Z'} (f_*a)_{z'}\right|_R:=\left|\sum_{z\in Z'}\sum_{f(z)=z'} a_z\right|_R=
\left|\sum_{a\in Z} a_{z}\right|_R
\leq \rho\cdot\max_i\left(\left|\sum_{z\in Z_i}a_{z}\right|_R\right)=
\rho\cdot\max_i\left(\left|\sum_{z'\in Z_i'}f_*a_{z'}\right|_R\right).
$$
It is also bounded since
$$
\|f_* a\|_\infty:=\left\|\sum_{y\in Y}\left(\sum_{f(x)=y} a_x\right)\{y\}\right\|_\infty=
\max_{y\in Y}\left(\left|\sum_{f(x)=y} a_x\right|_R\cdot |y|_Y\right)\leq 
\rho\cdot\max_{\underset{f(x)=y}{y\in Y}} (|a_x|_R\cdot |f(x)|_Y)
$$
so that
$$
\|f_*a\|_\infty\leq
\rho\cdot C_f\cdot \max_{\underset{f(x)=y}{y\in Y}} (|a_x|_R\cdot |x|_X)=
\rho\cdot C_f\cdot \max_{x\in X}(|a_x|_R\cdot |x|_X)=
\rho\cdot C_f\cdot \|a\|_\infty,
$$
which implies that $f_*$ is a morphism in $\R_{+\leq}^\Sets$ of norm smaller than $\rho\cdot C_f$.
The endofunctor
$$R_{\leq \rho,}^{(\cdot)}:\R_{+\leq}^\Sets\to \R_{+\leq}^\Sets$$
is also monadic since the composition map for linear combinations
$$\mu:R^{\left(R^{(X)}_{\leq\rho}\right)}_{\leq\rho}\to R^{(X)}_{\leq\rho}$$
is bounded.
Indeed, if $a\in R^{\left(R^{(X)}_{\leq\rho}\right)}_{\leq\rho}$ is given by
$$
a=\displaystyle\sum_{b\in R^{(X)}_{\leq\rho}}
\textstyle a_{\sum_x b_x\{x\}} \left\{\sum_x b_x\{x\}\right\},
$$
then we have
$$
\begin{array}{ccl}
|a|_\infty & = &
\displaystyle\max_{b\in R^{(X)}_{\leq\rho}}
\textstyle\left(|a_{\sum_x b(x)\{x\}}|_R\cdot|\sum b_x\{x\}|_\infty\right)\\
& = &
\displaystyle\max_{b\in R^{(X)}_{\leq\rho}}
\textstyle\left(|a_{\sum_x b(x)\{x\}}|_R\cdot \max_{x}(|b_x|_R\cdot|x|_X)\right)\\
& = &
\displaystyle\max_{\underset{x\in X}{b\in R^{(X)}_{\leq\rho}}}
\textstyle\left(|a_{\sum_x b_x\{x\}}|_R\cdot |b_x|_R\cdot|x|_X\right)
\end{array}
$$
and also
$$
\begin{array}{ccl}
|\mu(a)|_\infty
& = &
\left|\displaystyle\sum_{x\in X}
\left(\sum_{\underset{b_x\neq 0}{b\in R^{(X)}_{\leq\rho}}}
a_{\sum_x b_x\{x\}}\cdot b_x\right)\{x\}\right|_\infty\\
& = &
\displaystyle\max_{x\in X}
\left(\left|\sum_{\underset{b_x\neq 0}{b\in R^{(X)}_{\leq\rho}}}
a_{\sum_x b_x\{x\}}\cdot b_x\right|_R\cdot|x|_X\right)\\
& \leq &\rho\cdot D\cdot
\displaystyle\max_{x\in X}
\left(\max_{\underset{b_x\neq 0}{b\in R^{(X)}_{\leq\rho}}}
\left(|a_{\sum_x b_x\{x\}}|_R\cdot |b_x|_R\right)\cdot|x|_X\right)\\
& = &\rho\cdot D\cdot 
\displaystyle\max_{\underset{x\in X}{b\in R^{(X)}_{\leq\rho}}}
\textstyle\left(|a_{\sum_x b_x\{x\}}|_R\cdot |b_x|_R\cdot|x|_X\right)
\end{array}
$$
so that
$$|\mu(a)|_\infty\leq \rho\cdot D\cdot |a|_\infty.$$
One may recover $(R,+,\times,|\cdot|_R)$ as $R=R_{\leq C}^{(\{1_1\})}$
(where $C$ denotes, as before, the norm of the addition map $+$ on $R$).
The addition and multiplication maps can be found back
by composition of the monadic multiplication
$$\mu:R_{\leq C}^{\left(R_{\leq C}^{(\{1_1\})}\right)}\to R_{\leq C}^{(\{1_1\})}$$
with the (bounded) embeddings
$$
\begin{array}{cccc}
[+]:	& R^2	& \to 		& R_{\leq C}^{\left(R_{\leq C}^{(\{1_1\})}\right)}\\
	& (a,b)	& \mapsto		& 1\cdot\{a\}+1\cdot\{b\}
\end{array}
$$
and
$$
\begin{array}{cccc}
[\times]:	& R^2	& \to 		& R_{\leq C}^{\left(R_{\leq C}^{(\{1_1\})}\right)}\\
		& (a,b)	& \mapsto		& a\cdot\{b\}
\end{array}
$$
Recall however that a bounded morphism $f:R\to S$ of weakly seminormed
rings only induces a morphism $R^{(\cdot)}\to S^{(\cdot)}$ of monads in $\R_{+\nleq}^\Sets$,
and not any morphism $R^{(\cdot)}_{\leq \rho}\to S^{(\cdot)}_{\leq \rho}$ in general.
This makes the theory of weakly seminormed rings algebraically much more complicated than
the theory of seminormed rings, and explains why we will mostly work with seminormed
structures and the category $\R_{+\leq 1}^\Sets$ from now on.
\begin{remark}
\label{Durov-ring-of-integers}
We may think of
$$R^\circ_\rho:X\mapsto R_{\leq \rho}^{(X)}$$
as a kind of ring of $\rho$-integers in $R$, analogous to Durov's archimedean ring of
integers $\Z_\infty\subset \R$ from \cite{Durov-2007}. It is possible to define an ideal
$$R^{\circ\circ}_\rho:X\mapsto R_{<\rho}^{(X)}$$
in this monad, with quotient given denoted by $\tilde{R}_\rho:=R^{\circ}_\rho/R^{\circ\circ}_\rho$.
The $\R_+^*$-graded monad
$$\Gr(R,|\cdot|_R):=\oplus_{r\in |R|_R\backslash 0} \tilde{R}_\rho$$
then gives a natural archimedean analog of Temkin's reduction from \cite{Temkin-local-properties-II},
Section 3, that may be useful to describe the archimedean points of the dagger analytic topoi,
if we start from a usual seminormed ring $R$.
\end{remark}

\subsection{Seminormed polynomials and convergent power series}
Let $(X,|\cdot|_X)$ be an object of $\R_{+\leq 1}^\Sets$ and $(\N,+,0)$ be
the additive monoid of non-negative integers. Let
$$(X)^\N:=\Hom_{\Sets-fs}(X,\N)$$
be the monoid of monomials on $X$, given by the set of finitely supported maps from
$X$ to $\N$, seen as a multiplicative monoid whose generic element is of the
form $X^\alpha$ for $\alpha:X\to \N$.
The multiplicative grading on $(X)^\N$ is given by
$$\|X^\alpha\|_m:=\prod_{x\in X,\;\alpha(x)\neq 0}|x|_X^{\alpha(x)},$$
with the convention that the empty product is equal to $1$.
This is a multiplicative monoid map $\|\cdot\|_m:(X)^\N\to \R_+$.
Indeed, if $\alpha,\beta:X\to \N$ are two finitely supported maps,
then we have
$$
\begin{array}{ccl}
\|X^\alpha\cdot X^\beta\|_m	& :=	& \|X^{\alpha+\beta}\|_m\\
						& :=	& \prod_{x\in X,(\alpha+\beta)(x)\neq 0}|x|_X^{(\alpha+\beta)(x)}\\
			& =	& \displaystyle
				   \left(\prod_{\underset{\beta(x)=0}{\underset{\alpha(x)\neq 0}{x\in X}}}
				   |x|_X^{\alpha(x)}\right)\cdot
				   \left(\prod_{\underset{\beta(x)\neq 0}{\underset{\alpha(x)\neq 0}{x\in X}}}
				   |x|_X^{(\alpha+\beta)(x)}\right)\cdot
				   \left(\prod_{\underset{\alpha(x)=0}{\underset{\beta(x)\neq 0}{x\in X}}}
				   |x|_X^{\beta(x)}\right)
\end{array}
$$
and also
$$
\begin{array}{ccl}
\|X^\alpha\|_m\cdot \|X^\beta\|_m	& :=	&
				   \left(\prod_{x\in X,\alpha(x)\neq 0} |x|_X^{\alpha(x)}\right)\cdot
				   \left(\prod_{x\in X,\beta(x)\neq 0}|x|_X^{\beta(x)}\right)\\
			& =	& \displaystyle
				   \left(\prod_{\underset{\beta(x)=0}{\underset{\alpha(x)\neq 0}{x\in X}}}
				   |x|_X^{\alpha(x)}\right)\cdot
				   \left(\prod_{\underset{\beta(x)\neq 0}{\underset{\alpha(x)\neq 0}{x\in X}}}
				   |x|_X^{\alpha(x)}\right)\cdot
				   \left(\prod_{\underset{\alpha(x)\neq 0}{\underset{\beta(x)\neq 0}{x\in X}}}
				   |x|_X^{\beta(x)}\right)\cdot
				   \left(\prod_{\underset{\alpha(x)=0}{\underset{\beta(x)\neq 0}{x\in X}}}
				   |x|_X^{\beta(x)}\right)
\end{array}
$$
so that
$$\|X^\alpha\cdot X^\beta\|_m=\|X^\alpha\|_m\cdot \|X^\beta\|_m.$$
Now suppose given a contracting map $f:X\to Y$ and $\alpha\in (X)^\N$.
As before, we define
$$f_*\alpha(y)=\sum_{x\in f^{-1}(y)}\alpha(x).$$
Suppose given a fixed $y\in Y$ such that $f_*\alpha(y):=\sum_{x\in f^{-1}(y)}\alpha(x)\neq 0$.
Then
$$
|y|_Y^{f_*\alpha(y)}=|y|_Y^{\sum_{x\in f^{-1}(y)} \alpha(x)}=
\prod_{x\in f^{-1}(y),\alpha(x)\neq 0} |f(x)|_Y^{\alpha(x)}.
$$
This implies that
$$
\|f_*\alpha\|_m:=\prod_{y\in Y,f_*\alpha(y)\neq 0}|y|_Y^{f_*\alpha(y)}=
\prod_{x\in f^{-1}(Y),\alpha(x)\neq 0}|f(x)|_Y^{\alpha(x)},
$$
and since $f:X\to Y$ is contracting, we get
$$
\|f_*\alpha\|_m\leq
\prod_{x\in f^{-1}(Y),\alpha(x)\neq 0}|x|_X^{\alpha(x)}=\prod_{x\in X,\alpha(x)\neq 0}|x|_X^{\alpha(x)}=
\|\alpha\|_m.
$$
This shows that $X\mapsto (X)^{\N}$ gives a functor
$$(\cdot)^\N:\R_{+\leq 1}^\Sets\longrightarrow \Mon(\R_{+\leq 1}^\Sets,\otimes_m).$$
This free multiplicatively seminormed monoid has the following universal property:
if \mbox{$f:X\to N$} is a contracting map from an $\R_+$-graded set to a (commutative)
multiplicatively seminormed monoid, there is a unique extension
$$[f]:(X)^\N\to N$$
of $f$ to a morphism of multiplicatively seminormed monoids, given by
$$[f](\alpha):=\prod_{x\in X,\alpha(x)\neq 0}f(x)^{\alpha(x)}.$$

Now we may combine the above two constructions: if $(X,|\cdot|_X)$ is an $\R_+$-graded set
and $(R,|\cdot|_R,+,\times)$ is a seminormed ring, we may define the seminormed polynomial
ring $R[X]$ by setting $R[X]:=R^{((X)^\N)}$, together with the $\ell^1$-seminorm $\|\cdot\|_1$
associated to the multiplicative seminorm $\|\cdot\|_m$ on $(X)^\N$. By construction, this is a
seminormed $R$-module
together with a multiplication given by the usual polynomial multiplication
$$
(\sum a_\alpha X^\alpha)\cdot (\sum b_\alpha X^\alpha):=
\sum_\gamma \left(\sum_{\alpha+\beta=\gamma} a_\alpha\cdot b_\beta\right)X^\gamma.
$$
We also have
$$
\left|(\sum a_\alpha X^\alpha)\cdot (\sum b_\alpha X^\alpha)\right|\leq
\sum_\gamma \left(\sum_{\alpha+\beta=\gamma} |a_\alpha|_R\cdot |b_\beta|_R\right)
|X^\alpha|.|X^\gamma|=\left|\sum a_\alpha X^\alpha\right|\cdot \left|\sum b_\alpha X^\alpha\right|.
$$
Combined with our previous results, this shows that we have defined a functor
$$R[\cdot]_1:\R_{+\leq1}^\Sets\to \SNRings.$$
The polynomial $R$-algebra has the following universal property: if $X$ is an $\R_+$-graded
set, $R\to S$ is a (contracting) morphism of seminormed rings, and $f:X\to S$ is a contracting map,
then $f$ extends uniquely to a morphism of seminormed $R$-algebras
$$[f]:R[X]_1\to S.$$
Composition with the forgetful functor $\SNRings\to \R_{+\leq 1}^\Sets$ gives an endofunctor
$$\Sigma^r_R:\R_{+\leq 1}^\Sets\to \R_{+\leq 1}^\Sets$$
which is monadic, since it is constructed as the composition of two monadic functors.
One may recover $R$ from $\Sigma^r_R$ by setting $R=\Sigma^r_R(\{\emptyset\})$.

There is a natural completion functor
$$\hat{-}:\SNRings\to \BanRings$$
that sends a seminormed ring to its completion.
If $R$ is a Banach ring, we will denote by
$$R\langle \cdot\rangle:\R_{+\leq1}^\Sets\to \BanRings$$
the composition of the completion functor with $R[\cdot]_1$, that
sends $X$ to $R\langle X\rangle:=\widehat{R[X]}$.
It still induces a monadic functor on $\R_{+\leq}^\Sets$.

\begin{definition}
A weakly seminormed ring $(A,|\cdot|_A)$ is called \emph{uniform} if its seminorm
is \emph{power-multiplicative}, meaning that
$$|a^n|_A=|a|_A^n\textrm{ for all $a\in A$ and $n\in \N$}.$$
We denote $\SNRings_{u}$ (resp. $\BanRings_u$) the category of uniform
(resp. complete uniform) seminormed rings.
\end{definition}

We will now show that, if we work with the category of uniform weakly seminormed rings,
which are the basic building blocs of global analytic geometry, we may restrict to
the category of uniform seminormed rings without loosing any information.
\begin{lemma}
The natural functor
$$\SNRings_u\to \SNRings_{w,u}$$
from uniform seminormed rings to uniform weakly seminormed rings is
fully faithful and every weakly seminormed ring is of the form
$(R,|\cdot|^t_R)$ for some $t>0$ and $(R,|\cdot|_R)$ a seminormed ring.
\end{lemma}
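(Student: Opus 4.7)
The plan has two independent parts.

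\textbf{Full faithfulness.} A uniform seminormed ring is, by definition, a uniform weakly seminormed ring in which the addition constant can be taken to equal $2$, the multiplication constant equals $1$, one has $|1|_R \leq 1$, and the strong triangle inequality $|a-b|_R \leq |a|_R + |b|_R$ holds. Morphisms in both $\SNRings_u$ and $\SNRings_{w,u}$ are bounded ring homomorphisms, and boundedness of a given ring map between fixed underlying seminormed rings is a property independent of the ambient category. So the hom-sets coincide and the inclusion is fully faithful.

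\textbf{Essential image up to rescaling.} Let $(R, |\cdot|_R)$ be a uniform weakly seminormed ring with addition constant $C$ and multiplication constant $D$. The strategy is to rescale the seminorm by a suitable exponent $1/t$ so that it becomes a seminorm in the strict sense. First, two consequences of power-multiplicativity: from
\[|ab|_R^n = |(ab)^n|_R = |a^n b^n|_R \leq D\, |a^n|_R\, |b^n|_R = D\, |a|_R^n\, |b|_R^n,\]
taking $n$-th roots and letting $n \to \infty$ yields $|ab|_R \leq |a|_R |b|_R$, so submultiplicativity is automatic. Similarly $|1|_R = |1|_R^2$, hence $|1|_R \in \{0,1\}$ and in particular $|1|_R \leq 1$.

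Now choose $t \geq \max(1, \log_2 C)$ and set $|\cdot|' := |\cdot|_R^{1/t}$. Power-multiplicativity and submultiplicativity are preserved under taking the $(1/t)$-th power, and $|1|' \leq 1$ still. For the strong triangle inequality, using $C^{1/t} \leq 2$:
\[|a+b|' = |a+b|_R^{1/t} \leq C^{1/t} \max(|a|_R, |b|_R)^{1/t} \leq 2\, \max(|a|', |b|') \leq |a|' + |b|'.\]
Hence $(R, |\cdot|')$ is a uniform seminormed ring, and $|\cdot|_R = (|\cdot|')^t$, as required.

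\textbf{Main obstacle.} The subtle step is the automatic improvement of the multiplication constant to $D=1$ via power-multiplicativity: without it, replacing $|\cdot|_R$ by $|\cdot|_R^{1/t}$ would only replace $D$ by $D^{1/t}$, which is not necessarily $\leq 1$ for any $t$. Once $D=1$ is secured, the strong triangle inequality falls out cleanly from the bound $C^{1/t} \leq 2$.
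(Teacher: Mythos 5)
Your proposal has two genuine gaps, one in each half.

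\textbf{Full faithfulness.} You assert that the hom-sets in $\SNRings_u$ and $\SNRings_{w,u}$ coincide because "morphisms in both categories are bounded ring homomorphisms." In the paper's setup this is not so: seminormed rings are monoid objects in $\R_{+\leq 1}^{\Sets}$, so morphisms of seminormed rings are \emph{contracting} maps, whereas morphisms of weakly seminormed rings are merely \emph{bounded}. Fullness is therefore a nontrivial statement: one must show that any bounded ring homomorphism $f:R\to S$ between uniform seminormed rings is automatically contracting. This is where uniformity is actually used: from $|f(a)|_S^n=|f(a^n)|_S\leq C_f\,|a^n|_R=C_f\,|a|_R^n$ one takes $n$-th roots and lets $n\to\infty$ to get $|f(a)|_S\leq |a|_R$. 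This is exactly the argument the paper gives; your version skips the point of the statement.

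\textbf{Triangle inequality.} Your chain ends with $2\max(|a|',|b|')\leq |a|'+|b|'$, which is false whenever $|a|'\neq|b|'$ (e.g.\ $|b|'=0$, $|a|'>0$ gives $2|a|'\leq |a|'$); the inequality between $2\max$ and the sum goes the other way. Getting from the bound $|a+b|'\leq 2\max(|a|',|b|')$ to the genuine triangle inequality is not a one-line rescaling: it is Artin's lemma, proved by expanding $|(a+b)^n|$ binomially, using $|m|\leq 2m$ for integers $m$ and submultiplicativity to bound it by (a constant times $n$ times) $(|a|'+|b|')^n$, and then taking $n$-th roots as $n\to\infty$. The paper simply cites Artin (Theorem 3 of the reference) for the existence of the exponent $t$. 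Your reduction of $D$ to $1$ via power-multiplicativity is correct and matches the paper, and your observation that this step is needed before rescaling is a fair point, but the two issues above must be repaired for the proof to stand.
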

\begin{proof}
Let $(R,|\cdot|_R,+,\times)$ be a uniform weakly seminormed ring. Then by \cite{E-Artin1},
Theorem 3, there exists $t>0$
such that $|\cdot|_R^t$ fulfills the triangle inequality
$$|a+b|_R^t\leq |a|_R^t+|b|_R^t.$$
Moreover, if we have $|a\cdot b|_R\leq C|a|_R\cdot|b|_R$, then taking $n$-th powers
of the arguments, $n$-th roots of the terms in the inequality, and passing to the
limit $n\to \infty$, we get $|a\cdot b|_R\leq |a|_R\cdot |b|_R$. This shows
the last statement.
Similarly,  if $f:R\to S$ is a bounded morphism of uniform weakly seminormed rings,
the above ``$n$-th roots of unity argument'' shows that $f$ is contracting.
This shows that the inclusion functor is full. It is clearly faithful, which finishes the
proof that it is an equivalence of categories.
\end{proof}

There is a natural ``uniformization'' functor
$$U:\SNRings\to \SNRings_{u}$$
that sends $(R,|\cdot|_R)$ to $R$ equipped with the seminorm given on $a\in A$ by
$$|a|_{R,u}:=\lim_{n\to +\infty} \sqrt[n]{|a^n|}.$$

We will denote by
$$R\{\cdot\}:\R_{+\leq1}^\Sets\to \BanRings_{u}$$
the composition of $U$ with $R\langle\cdot\rangle$.
This still induces a monadic functor on $\R_{+\leq1}^\Sets$.

If $X=\rho=(\rho_1,\dots,\rho_n)\in \R_+^n$, we denote $\rho^{-1}T$ the set of variables
$\{\rho^{-1}T_1,\dots,\rho_n^{-1}T_n\}$ equipped with the $\R_+$-grading given
by $|\rho_i^{-1}T_i|=\rho_i$. We will denote
$$R\langle \rho^{-1}T\rangle:=R\langle X\rangle\;\textrm{ and }\;R\{\rho^{-1}T\}:=R\{X\}.$$

\begin{proposition}
If $(R,|\cdot|_R)$ is a seminormed ring, the category $\Mod(R,|\cdot|_R)$ has arbitrary colimits,
finite limits and internal homomorphisms. It also has a natural symmetric monoidal structure
called the projective tensor product.
Seminormed rings (resp. uniform seminormed rings) have finite colimits and finite limits.
\end{proposition}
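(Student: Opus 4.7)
The plan is to realize each structure by descent from the corresponding structures already available on $\R_{+\leq 1}^\Sets$, using the monadic presentations $\Sigma^m_R$ of modules and $\Sigma^r_R$ of rings constructed above, together with the uniformization functor $U$.

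For $\Mod(R,|\cdot|_R)$, arbitrary coproducts are given by the abstract direct sum $\bigoplus_i M_i$ equipped with the $\ell^1$-seminorm $\|(m_i)\|:=\sum_i |m_i|_{M_i}$, and general colimits are built by combining these with coequalizers defined as quotients of $N$ by the $R$-submodule generated by the image of the difference, endowed with the infimum (quotient) seminorm exactly as in the earlier lemma on coequalizers in $\R_{+\leq 1}^\Sets$. Finite limits come from finite products with sup-seminorm and componentwise $R$-action, and from kernels carrying the restriction of the source seminorm. The internal hom $\Homc_R(M,N)$ is the $R$-submodule of $\Homc(M,N)$ consisting of $R$-linear contracting maps, equipped with the operator seminorm
$$|f|:=\inf\{C>0\mid |f(m)|_N\leq C\cdot |m|_M\text{ for all }m\in M\}.$$

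For the projective tensor product, I would define $M\otimes_R N$ as the coequalizer, in $\Mod(R,|\cdot|_R)$, of the standard pair of bilinearity and $R$-balancing maps between suitable free seminormed $R$-modules, which unfolds to the classical infimum formula
$$|x|_{M\otimes_R N}=\inf\Bigl\{\sum_i |m_i|_M\cdot |n_i|_N\;\Big|\;x=\sum_i m_i\otimes n_i\Bigr\}.$$
The universal property (contracting $R$-bilinear maps $M\times N\to P$ correspond bijectively to contracting $R$-linear maps $M\otimes_R N\to P$) is a direct consequence of the universal property of the free seminormed module $R^{(\cdot)}$ established in the previous subsection, and the monoidal coherence data (associator, unit $R$, symmetry) then follow formally.

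For $\SNRings$, finite products with the sup-seminorm and componentwise multiplication, and equalizers with the induced seminorm, furnish all finite limits. Finite coproducts are projective tensor products over the initial object $(\Z,|\cdot|_\infty)$, which is initial since $|n|_R\leq n\cdot |1|_R\leq n=|n|_\infty$ in any seminormed ring $R$, and coequalizers are quotients by the seminormed ideal generated by the image of the difference; together these give all finite colimits. For the uniform case, the preceding lemma presents the inclusion $\SNRings_u\hookrightarrow \SNRings$ as fully faithful with left adjoint $U$, so finite colimits in $\SNRings_u$ are obtained by applying $U$ to the corresponding colimit in $\SNRings$, while finite limits are already uniform since a sup-seminorm on a product of uniform rings is power-multiplicative and uniformity descends to seminormed subrings. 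The main delicate step I expect is verifying that the infimum formula for the projective tensor product truly defines a sub-multiplicative seminorm — in particular the triangle inequality, obtained by concatenating two representations of a sum — and that the resulting tensor product is symmetric monoidal in $\Mod(R,|\cdot|_R)$ compatibly with the underlying monoidal structure $\otimes_1$ on $\R_{+\leq 1}^\Sets$.
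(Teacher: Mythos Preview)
Your proposal is essentially correct and follows the same route as the paper: build (co)limits and the projective tensor product explicitly from the corresponding structures on $\R_{+\leq 1}^\Sets$, take $(\Z,|\cdot|_\infty)$ as the initial seminormed ring, and obtain the uniform case by applying $U$. The paper defines the projective tensor seminorm as the residue seminorm of the $\ell^1$-seminorm on $R^{(M\times N)}$ along the quotient map $R^{(M\times N)}\to M\otimes_R N$, which unravels to the infimum formula you wrote, so the two presentations agree.

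One small slip: your internal hom $\Homc_R(M,N)$ is described as ``the $R$-submodule consisting of $R$-linear contracting maps,'' but contracting maps are not stable under multiplication by scalars $r$ with $|r|_R>1$, so they do not form an $R$-submodule. The object carrying the operator seminorm you wrote down should be the module of \emph{bounded} $R$-linear maps; the contracting ones are then exactly its unit ball, and morphisms in $\Mod(R,|\cdot|_R)$ into it recover contracting bilinear maps as required. The paper itself does not spell this out either, so this is a minor point, but the wording should be corrected.
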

\begin{proof}
The fact that seminormed modules have arbitrary colimits, finite limits and internal homorphisms
follows from the fact that the category $\R_{+\leq 1}^\Sets$ has these properties.
For example, if $(M,|\cdot|_M)$ is a seminormed abelian group and $R\subset M$ be a submodule,
the map $M/R\to \R_+$ defined by
$$|x|=\inf_{m\in \pi^{-1}(x)}|m|_M$$
is a group seminorm on $M/R$ called the residue seminorm.
If $R$ is a seminormed ring and $I\subset R$ is an ideal, then the residue seminorm
on $R/I$ is a ring seminorm that makes $A/I$ the quotient seminormed ring.
Similarly, if $f,g:(R,|\cdot|_R)\to (S,|\cdot|_S)$ are two parallel morphisms of seminormed rings,
then the coequalizer $\coker(f,g)$ is given by the ring $S/I$, where $I$ is the ideal generated by
elements of the form $f(x)-g(x)$ for $x\in R$, equipped with the residue seminorm.
Let $(R,|\cdot|_R)$ be a seminormed ring and $(M,|\cdot|_M)$ and $(N,|\cdot|_N)$ be
two seminormed modules over $(R,|\cdot|_R)$.
The seminorm on $M\otimes_R N$ given by the residue
seminorm of the $\ell^1$ module seminorm
$$
\begin{array}{cccc}
\|\cdot\|_1:	& R^{(M\times N)}		& \to 	 &\R_+\\
			& \sum a_{(m,n)}(m,n)	& \mapsto & \sum |a_{(m,n)}|_R\cdot|m|_M\cdot |n|_N
\end{array}
$$
along the quotient map $R^{(M\times N)}\to M\otimes_A N$ is called the projective
tensor product seminorm and denoted $\|\cdot\|_p:M\otimes_A N\to \R_+$.
This gives a symmetric monoidal structure $\otimes_{R,1}$ on $\Mod(R)$.
The initial seminormed ring (resp. uniform seminormed ring) is given by the ring
of integers $\Z$ together with its archimedean seminorm $|\cdot|_\infty$.
Indeed, $\Z$ is the initial ring, and the maximal ring seminorm on $\Z$
must fulfill $|1|=1$ and $|1+1|=|1|+|1|=2$, so that all ring seminorms on $\Z$
are bounded by this one.
If $(R_i,|\cdot|_{i})_{i\in I}$ is a finite family of seminormed rings,
the projective tensor product seminorm on $\otimes_{(\Z,|\cdot|_\infty)} (R_i,|\cdot|_i)$
is a ring seminorm that gives the coproduct of the family.
We already showed that seminormed rings have coequalizers.
The same is true for uniform seminormed rings using the uniformization functor $U$.
This shows the statement about finite colimits of seminormed rings.
Now the non-empty finite limits of the underlying $\R_+$-graded sets in
$\R_{+\leq 1}^\Sets$ give limits in seminormed rings and uniform seminormed rings.
\end{proof}

\section{Dagger algebras}
We will start this section by explaining the motivations underlying the introduction
of the category of dagger algebras.

An important drawback of Banach rings like $R\langle \rho^{-1}T\rangle$ or $R\{\rho^{-1}T\}$
is that they are often non-noetherian (for example, if $\rho=1$ and
$R=\Cbf$, one gets the ring of continuous functions on the complex unit disc
that are analytic on its interior).
Moreover, Banach rings of convergent power series don't have a nice differential calculus,
at least over $\Qbf_p$, since the Poincar\'e lemma fails for them.
These two facts are our main motivations for the introduction of rings of
overconvergent power series.

A power series $f\in R[[T]]$ in one variable over a Banach ring $R$ is overconvergent
on a disc of radius $\rho$ if it converges on a disc of radius $\nu>\rho$.
We would like to define the ring of overconvergent power series as the
filtered colimit of the rings $R\langle \nu^{-1}T\rangle$. However, this
filtered colimit in the category of Banach rings is simply $R\langle\rho^{-1} T\rangle$.
So we need to work in a category that contains Banach rings and that allows us to keep
track of the overconvergence properties. This will be the category of ind-Banach rings,
that is already discussed (in the complex situation) in Bambozzi's thesis
\cite{Bambozzi}, Section 3.3. We will thus define the ring $R\langle \rho^{-1}T\rangle^\dagger$
of $\ell^1$-overconvergent power series on a given polydisc of radius $\rho$
as the uniform ind-Banach ring given by the filtered colimit of the Banach rings
$R\langle \nu^{-1}T\rangle$ for $\nu>\rho$. One may then define $\ell^1$-dagger
affinoid algebras as ind-Banach rings that are isomorphic to algebras of the form
$R\langle \rho^{-1}T\rangle^\dagger/I$ for $I$ a finitely presented ideal.

Since we want to get back $p$-adic analytic spaces in the sense of Berkovich
when we work over the Banach ring $\Qbf_p=(\Q_p,|\cdot|_p)$, we will not work with general Banach
rings, but with uniform Banach rings. If we denote $\otimes$ the coproduct
in the category $\BanRings_u$ of uniform Banach rings, we get for example
for $\Zbf=(\Z,|\cdot|_\infty)$, natural isomorphisms
$$\Qbf_p\{\rho^{-1}T\}\cong \Zbf\{\rho^{-1}T\}\otimes_\Zbf\Qbf_p$$
and
$$\Cbf\{T\}\cong \Zbf\{\rho^{-1}T\}\otimes_\Zbf\Cbf,$$
where we use the two contracting morphisms $\Zbf=(\Z,|\cdot|_\infty)\to (\C,|\cdot|_\C)=\Cbf$
and $\Zbf=(\Z,|\cdot|_\infty)\to (\Q_p,|\cdot|_p)=\Qbf_p$ to extend the scalars.
The coproduct of uniform Banach rings thus exactly gives, by scalar extension,
the rings of analytic functions on a disc that we would like to use both in the
$p$-adic and in the complex case.

We will thus define the ring $R\{\rho^{-1}T\}^\dagger$ of $\ell^\infty$-overconvergent power series
on a given polydisc of radius $\rho$ as the uniform ind-Banach ring given by the filtered
colimit of the uniform Banach rings $R\{\nu^{-1}T\}$ for $\nu>\rho$.

An important drawback of the category of uniform Banach rings is that they
are reduced, so that they exclude the use of nilpotent elements, that has proved (since Weil)
to be so useful to do differential calculus in a geometric setting.
This will lead us to the definition of the category of $\ell^\infty$-dagger algebras
over a given Banach ring, that is inspired by the work of Dubuc-Zilber \cite{Dubuc-Zilber} on
analytic models for synthetic analytic geometry, of Lurie \cite{Lurie-DAG-V} on
general geometries and of Porta on complex derived geometry \cite{Porta-these}.
A dagger rational domain in an overconvergent polydisc $D^\dagger(0,\rho)$ (given by
the Berkovich spectrum of the ind-Banach ring $R\{\rho^{-1}T\}^\dagger$, to be defined
in this section) is a subspace
defined by finitely many inequalities of the form
$$
D(f_0,\rho_0|f_1,\rho_1,\dots,f_n,\rho_n)=\{x\in D^\dagger(0,\rho),\;\rho_0|f_i(x)|\leq \rho_i|f_0(x)|\},
$$
for $(f_i)$ generating the underlying ring of $R\{\rho^{-1}T\}^\dagger$ as an ideal and $\rho_i>0$.
One may define the ring of overconvergent functions on $D$ as the quotient
$$R\{(\rho_i/\rho_0)^{-1}T_i\}^\dagger/(f_0T_1-f_1,\dots,f_0T_n-f_n)$$
in the category of uniform ind-Banach rings, and morphisms of dagger rational
domains will be defined as morphisms of the corresponding uniform ind-Banach algebras
over $R$.

A dagger algebra $A$ over a Banach ring $R$ will be a ``functor of functions''
$$A:\Rat^\dagger_{(R,|\cdot|_R)}\to \Sets$$
on the category $\Rat_{(R,|\cdot|_R)}^\dagger$ of dagger rational domains in finitely
generated overconvergent power series rings $R\{\rho^{-1}T\}^\dagger$ for varying $\rho$
and $T$, that commutes with finite products and sends pullback diagrams of the form
$$
\xymatrix{
D_1\ar@{^{(}->}[r]\ar[d] 	& D_2\ar[d]\\
D_3\ar@{^{(}->}[r]^i		& D_4}
$$
with $i$ an embedding of a sub-rational domain, to pullbacks.
There is a natural fully faithful embedding
$$\RatAlg_{(R,|\cdot|_R)}^\dagger:=\Rat_{(R,|\cdot|_R)}^{\dagger,op}\to \Alg^\dagger_{(R,|\cdot|_R)}$$
of the category of rational domain algebras to the category of dagger algebras.

\subsection{Overconvergent power series}
We will first use the indization of the category of Banach and uniform Banach rings to
get a convenient category of overconvergent algebras over a general
Banach ring. We will use \cite{Kashiwara-Schapira-categories-and-sheaves}, Chapter 6,
as a general reference on indization of categories.

There is a natural Yoneda embedding
$$
\begin{array}{ccc}
\BanRings		& \to		 &\BanRings^\vee=\Hom(\BanRings^{op},\Sets)\\
(A,|\cdot|_A)	& \mapsto & (B,|\cdot|_B)\mapsto
					     \Hom_{\BanRings}\left((B,|\cdot|_B),(A,|\cdot|_A)\right)
\end{array}
$$
and a natural Yoneda embedding
$$
\begin{array}{ccc}
\BanRings_u		& \to		 &\BanRings_u^\vee=\Hom(\BanRings_u^{op},\Sets)\\
(A,|\cdot|_A)		& \mapsto & (B,|\cdot|_B)\mapsto
					     \Hom_{\BanRings_u}\left((B,|\cdot|_B),(A,|\cdot|_A)\right).
\end{array}
$$

\begin{definition}
The categories $\ind\BanRings$ (resp. $\ind\BanRings_u$) of \emph{ind-Banach (resp. uniform
ind-Banach) rings} is the full subcategory of $\BanRings^\vee$ (resp. $\BanRings_u^\vee$)
whose objects are isomorphic to filtered colimits of Banach rings (resp. uniform Banach rings).
\end{definition}

Recall that the natural uniformization functor
$$\BanRings\to \BanRings_u$$
commutes with finite colimits, but that this is not the case of the natural embedding
$$\BanRings_u\to \BanRings.$$
For example, the uniform tensor product is not isomorphic to the projective tensor product.

Recall some basic facts about ind-objects in a given category $\Cc$.
If $\alpha:I\to \Cc$ and $\beta:J\to \Cc$ are two filtrant systems in $\Cc$,
we may compute the morphisms between the corresponding ind-objects by setting
$$
\Hom_{\ind\Cc}(\colim_i \alpha(i),\colim_j \beta(j)):=
\lim_i\colim_j\Hom_{\Cc}(\alpha(i),\beta(j)).
$$
If $f:A\to B$ is a morphism between two ind-objects in $\Cc$,
there exists a filtrant category $I$, two functors $\alpha:I\to \Cc$ and $\beta:I\to\Cc$
and a morphism of functors $\phi:\alpha\to \beta$ such that $\colim_I\, \phi=f$ in $\ind\Cc$.
More generally, if $f,g:A\to B$ is a pair of parallel morphisms between two ind-objects
in $\Cc$, there exists a filtrant category $I$, two functors $\alpha:I\to \Cc$ and
$\beta:I\to \Cc$ and two morphisms of functors $\phi,\psi:\alpha\to \beta$
such that $f=\colim_I\,\phi$ and $g=\colim_I\, \psi$. This implies that the cokernel in $\ind\Cc$
of $f$ and $g$ may be computed as the colimit of the objectwise cokernels
$$\coker\left(f,g:A\to B\right)=\colim_I\left(\coker\left(\phi(i),\psi(i):\alpha(i)\to \beta(i)\right)\right).$$
We have more generally the following result (see \cite{Kashiwara-Schapira-categories-and-sheaves},
Proposition 6.1.18):
\begin{proposition}
\label{indization-commute-colimits}
Let $\Cc$ be a category that admits cokernels (resp. finite coproducts, resp. finite colimits).
Then $\ind\Cc$ admits cokernels (resp. small coproducts, resp. small colimits)
and the natural embedding $\Cc\to \ind\Cc$ commutes with cokernels (resp. finite
coproducts, resp. finite colimits).
\end{proposition}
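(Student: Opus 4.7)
The strategy is to reduce everything to manipulations of filtered colimits, exploiting the two key facts recalled just before the statement: morphisms in $\ind\Cc$ are computed as $\lim_i\colim_j\Hom_\Cc(\alpha(i),\beta(j))$, and every parallel pair in $\ind\Cc$ can be lifted to a filtered system of parallel pairs in $\Cc$. I will treat the three cases separately (cokernels, small coproducts, small colimits), and in each case build the candidate colimit as a filtered colimit in $\ind\Cc$ of objects that exist in $\Cc$ by the respective hypothesis.

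For cokernels, given a parallel pair $f,g\colon A\to B$ in $\ind\Cc$, I would invoke the lifting statement recalled in the text to obtain a filtrant category $I$, functors $\alpha,\beta\colon I\to\Cc$, and morphisms of functors $\phi,\psi\colon \alpha\to\beta$ with $f=\colim_I\phi$ and $g=\colim_I\psi$. Since $\Cc$ admits cokernels, set $\gamma(i):=\coker(\phi(i),\psi(i))$; functoriality of cokernels in $\Cc$ makes $\gamma$ into a functor $I\to\Cc$, and $C:=\colim_I\gamma\in\ind\Cc$ is the candidate. The universal property is verified by a direct computation: for any $X\in\ind\Cc$,
$$\Hom_{\ind\Cc}(C,X)=\lim_i\Hom_{\ind\Cc}(\gamma(i),X)=\lim_i\mathrm{eq}\!\left(\Hom(\beta(i),X)\rightrightarrows\Hom(\alpha(i),X)\right),$$
and limits of equalizers identify this with the equalizer of the two maps $\Hom(B,X)\rightrightarrows\Hom(A,X)$ induced by $f$ and $g$. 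Commutation with the embedding $\Cc\to\ind\Cc$ is immediate since for $f,g$ already in $\Cc$ one may take $I=\{*\}$.

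For small coproducts $(A_j)_{j\in J}$ with $A_j=\colim_{i\in I_j}\alpha_j(i)$, the plan is to set
$$\coprod_{j\in J}A_j := \colim_{K\Subset J}\;\colim_{(i_j)\in\prod_{j\in K}I_j}\;\coprod_{j\in K}\alpha_j(i_j),$$
where $K\Subset J$ ranges over finite subsets directed by inclusion; each inner finite coproduct exists in $\Cc$ by hypothesis, each product of filtrant categories is again filtrant, and the outer colimit is filtrant over the filtrant poset of finite subsets. Hence the double colimit is a filtered colimit of objects of $\Cc$ and defines an ind-object. Verifying the universal property reduces, using the formula for $\Hom$ in $\ind\Cc$ and the universal property of finite coproducts in $\Cc$, to the identity $\Hom(\coprod_{j\in K}\alpha_j(i_j),X)=\prod_{j\in K}\Hom(\alpha_j(i_j),X)$ and the fact that limits commute with limits; commutation with the embedding on finite coproducts follows by taking each $I_j=\{*\}$ and $K=J$.

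Finally, for general small colimits, I would combine the previous two constructions via the standard presentation of a colimit of a diagram $D\colon \Jc\to\ind\Cc$ as the cokernel of a pair of maps between two small coproducts (one indexed by morphisms, one by objects of $\Jc$). Since $\ind\Cc$ has both small coproducts and cokernels by the previous steps, it has all small colimits, and commutation with the embedding on finite colimits follows from commutation on finite coproducts and cokernels together with the same coequalizer presentation applied to a finite diagram. The main technical obstacle I anticipate is bookkeeping in the coproduct case: one must check that the apparently large double colimit really lands in the essential image of filtered colimits of objects of $\Cc$, which hinges on the observation that the product of finitely many filtrant categories is filtrant and that a filtrant colimit of filtrant colimits is again filtrant — a standard but slightly delicate point in the theory of ind-categories.
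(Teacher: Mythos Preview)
Your argument is correct and follows the standard route. Note, however, that the paper does not actually prove this proposition: it states it as a citation of \cite{Kashiwara-Schapira-categories-and-sheaves}, Proposition 6.1.18, and the only piece it spells out is the cokernel case in the paragraph immediately preceding the statement (lifting a parallel pair to a filtrant system and taking the objectwise coequalizers), which is exactly what you do. Your treatment of small coproducts via finite subsets of the index set, and of small colimits via the coequalizer presentation, is the standard argument one finds in Kashiwara--Schapira, so there is no divergence to comment on.
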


Since $\BanRings$ and $\BanRings_u$ have finite colimits,
$\ind\BanRings$ and $\ind\BanRings_u$ have small colimits and
the natural embeddings
$$\BanRings\to \ind\BanRings\;\textrm{ and }\;\BanRings_u\to \ind\BanRings_u$$
send finite colimits to finite colimits,
so that the coproduct of a diagram of (uniform) Banach rings seen as a (uniform) ind-Banach
ring is simply given by their (uniform) projective tensor product, and the cokernel of a pair
of morphisms of (uniform) Banach rings is given by the quotient (uniform) Banach ring.
More generally, the colimit of a finite diagram of (uniform) ind-Banach rings may be computed
by using filtered colimits of (uniform) projective tensor products and (uniform) quotients of the 
component (uniform) Banach rings of the given diagram.

In particular, the Banach ring $\Zbf=(\Z,|\cdot|_\infty)$ is both the initial ind-Banach and
uniform ind-Banach ring.

\begin{example}
Here is an interesting example of a non-trivial ind-Banach algebra.
For every finite \'etale extension $K$ of $\Q$, we denote
$\Obf_K=(O_K,\|\cdot\|_\infty)$ its ring of integers equipped with the norm
$$\|f\|_\infty:=\max_{\sigma:K\to \C}|\sigma(f)|_\infty.$$
Then the ring of integers $\bar{\Z}$ of $\bar{\Q}$ may be equipped with
an ind-Banach ring structure $\bar{\Zbf}$ given by the fact that it is the union of
all $O_K$ for $K$ finite \'etale over $\Q$. There is a natural action
of $\Gal(\bar{\Q}/\Q)$ on $\bar{\Zbf}$.
\end{example}

The forgetful functor $\Ringrm:\BanRings\to \Rings$ giving the underlying ring of a Banach ring
extends to two ``underlying ring'' functors
$$\Ringrm:\ind\BanRings\to \Rings\;\textrm{ and }\;\Ringrm:\ind\BanRings_u\to \Rings,$$
which is given by taking the filtered colimit of the underlying rings of a given diagram
of (uniform) Banach rings.

\begin{definition}
The \emph{Berkovich spectrum} $\Mc(A)$ of an ind-Banach ring $A$ is the set of equivalence
classes of morphisms $\chi:A\to (K,|\cdot|)$ where $(K,|\cdot|)$ is a multiplicatively
normed Banach field. If $A$ is a Banach ring, we equip $\Mc(A)$ with the coarsest topology
that makes all evaluation maps
$$
\begin{array}{ccccc}
|a(\cdot)|:	& \Mc(A)		& \to 	& \R_+\\
		& |\cdot(x)|	& \mapsto	& |a(x)|
\end{array}
$$
for $a\in A$ continuous.
If $A$ is an ind-Banach ring, we equip $\Mc(A)$ with the projective limit topology
$$\Mc(A)=\lim\Mc(A_i)$$
for a description $A=\colim A_i$ of $A$ as a colimit of Banach rings.
If $x$ is a point of $\Mc(A)$, the minimal Banach field $(\Kc(x),|\cdot|_x)$ in the corresponding
equivalence class is called the \emph{residue field at $x$}.
\end{definition}

The Berkovich spectrum is clearly functorial.

We now define overconvergent power series over a given Banach ring.
\begin{definition}
Let $(X,|\cdot|_X)$ be an $\R_+$-graded set. A grading $|\cdot|_1$ on $X$ is called
an \emph{over-grading} if
$$|x|_X<|x|_1\textrm{ for all $x\in X$.}$$
If $(R,|\cdot|_R)$ is a (uniform) seminormed ring, an over-grading of $|\cdot|_R$ that is
also a (uniform) seminorm will be called a (uniform) over-seminorm.
Let $R$ be a Banach ring and $X$ be an $\R_+$-graded set.
The ind-Banach ring of \emph{$\ell^1$-overconvergent power series} on $R$ is defined as
the (formal) filtered colimit, i.e., ind-Banach ring given by
$$
R\langle X\rangle^\dagger:=
\underset{|\cdot|_1:X\to \R_+\textrm{ over-seminorms}}{\colim} R\langle(X,|\cdot|_1)\rangle.
$$
If $R$ is unifom, the uniform ind-Banach ring of \emph{overconvergent power series} on $R$
is defined as the (formal) filtered colimit, i.e., ind-uniform Banach ring given by
$$R\{X\}^\dagger:=\underset{|\cdot|_1:X\to \R_+\textrm{ over-seminorms}}{\colim} R\{(X,|\cdot|_1)\}.$$
\end{definition}
In particular, if $\rho=(\rho_1,\dots,\rho_n)\in \R_{>0}^n$ is a polyradius, the
ind-Banach ring $R\langle\rho_1^{-1}T_1,\dots,\rho_n^{-1}T_n\rangle^\dagger$ of
$\ell^1$-overconvergent power series of radius $\rho$ is given by the colimit
$$
R\langle\rho_1^{-1}T_1,\dots,\rho_n^{-1}T_n\rangle^\dagger:=
\colim_{\nu>\rho} k\langle\nu_1^{-1}T_1,\dots,\nu_n^{-1}T_n\rangle,
$$
and the uniform ind-Banach ring $R\{\rho_1^{-1}T_1,\dots,\rho_n^{-1}T_n\}^\dagger$ of
overconvergent power series of radius $\rho$ is given by the colimit
$$
R\{\rho_1^{-1}T_1,\dots,\rho_n^{-1}T_n\}^\dagger:=
\colim_{\nu>\rho} k\{\nu_1^{-1}T_1,\dots,\nu_n^{-1}T_n\}.
$$

We will now recall basic properties of uniform Banach rings of overconvergent
power series. Similar properties also hold for their $\ell^1$ versions.
Recall that the Banach ring $R\{\rho^{-1}T\}$ has the following universal property:
for every bounded morphism of uniform Banach rings $R\to R'$ and every element $f\in R'$ such that
$|f|_{R'}\leq \rho$, there exists a unique commutative diagram:
$$
\xymatrix{R\{\rho^{-1}T\}\ar[r] & R'\\
R\ar[u]\ar[ur]}
$$
The uniform ind-Banach ring $R\{\rho^{-1}T\}^\dagger$ has the following universal property:
there is an isomorphism natural in $R'=\colim R_i'$:
$$
\Hom_R(R\{\rho^{-1}T\}^\dagger,\colim_i R_i')\cong
\Hom(R,R')\times \lim_{\rho'>\rho}\colim_i \{f\in R_i',\;|f|_{R_i'}\leq \rho'\}.
$$
There is a natural coproduct on the category of overconvergent power series algebras which
is given by
$$
R\{\rho^{-1} T\}^\dagger\otimes_R R\{\nu^{-1} S\}^\dagger\cong R\{\rho^{-1} T,\nu^{-1} S\}^\dagger
$$
More generally, this formula also works for quotients by finitely generated ideals
$$
R\{\rho^{-1} T\}^\dagger/I\otimes_R R\{\nu^{-1} S\}^\dagger/J\cong
R\{\rho^{-1} T,\nu^{-1} S\}^\dagger/(I,J)
$$
because the indization functor commutes with finite colimits.

The formation of overconvergent power series also commutes with extensions of the
base Banach ring, meaning that for $f:R\to S$ a bounded morphism, we have
$$S\{X\}^\dagger\cong R\{X\}^\dagger\otimes_R S.$$
This allows us to define overconvergent power series over general ind-Banach rings.
\begin{definition}
Let $R$ be an ind-Banach (resp. a uniform ind-Banach) ring and $(X,|\cdot|_X)$ be
an $\R_+$-graded set.
Let $\Zbf=(\Z,|\cdot|_\infty)\to R$ be the canonical morphism.
The ind-Banach (resp. uniform ind-Banach) ring of overconvergent power series
on $R$ with variables in $X$ is defined by
$$
\begin{array}{c}
R\langle X\rangle^\dagger:=\Zbf\langle X\rangle^\dagger\otimes_\Zbf R\\
\textrm{ (resp. } R\{X\}^\dagger:=\Zbf\{X\}^\dagger\otimes_\Zbf R\textrm{).}
\end{array}
$$
\end{definition}

\subsection{Rational domains and dagger algebras}
In this subsection, we will denote $R$ a fixed ind-Banach ring.
We want to describe rational domains in the overconvergent polydisc
$D^\dagger_R(0,\nu):=\Mc(R\{\nu^{-1}T\}^\dagger)$
defined by finitely many inequalities of the form
$$D(f_0,\rho_0|\rho_1,f_1,\dots,\rho_n,f_n)=\{x\in D^\dagger(0,\nu),\;\rho_0|f_i(x)|\leq \rho_i|f_0(x)|\},$$
for $(f_i)$ generating $\Ringrm(R\{\nu^{-1}T\}^\dagger)$ as an ideal and $\rho_i>0$.
This will be done by defining directly the associated ind-Banach algebras.

\begin{definition}
Let $A$ be an ind-Banach ring and
let $(f_0,\dots,f_n)\in \Ringrm(A)^{n+1}$ be a finite family of
elements of the underlying ring of $A$ that generate $\Ringrm(A)$ as an ideal, and
$\rho=(\rho_0,\dots,\rho_n)\in \R_{>0}^{n+1}$.
\begin{enumerate}
\item The \emph{rational domain (resp. uniform rational domain) algebra} $A\langle\rho_0,f_0|\rho_1,f_1,\dots,\rho_n,f_n\rangle$
(resp. $A\{\rho_0,f_0|\rho_1,f_1,\dots,\rho_n,f_n\}$)
is the Banach (resp. uniform Banach) algebra over $A$ given by the quotient
$$
\begin{array}{c}
A\langle(\rho_1/\rho_0)^{-1}T_1,\dots,(\rho_n/\rho_0)^{-1}T_n\rangle/(f_0T_1-f_1,\dots,f_0T_n-f_n)\\
\left(\textrm{resp. }
A\{(\rho_1/\rho_0)^{-1}T_1,\dots,(\rho_n/\rho_0)^{-1}T_n\}/(f_0T_1-f_1,\dots,f_0T_n-f_n)
\right).
\end{array}
$$
\item The \emph{overconvergent rational domain (resp. uniform overconvergent rational domain)
algebra} $A\langle\rho_0,f_0|\rho_1,f_1,\dots,\rho_n,f_n\rangle^\dagger$
(resp. $A\{\rho_0,f_0|\rho_1,f_1,\dots,\rho_n,f_n\}^\dagger$)
is the ind-Banach (resp. uniform ind-Banach) algebra over $A$ given by the quotient
$$
\begin{array}{c}
A\langle(\rho_1/\rho_0)^{-1}T_1,\dots,(\rho_n/\rho_0)^{-1}T_n\rangle^\dagger/(f_0T_1-f_1,\dots,f_0T_n-f_n)\\
\left(\textrm{resp. }
A\{(\rho_1/\rho_0)^{-1}T_1,\dots,(\rho_n/\rho_0)^{-1}T_n\}^\dagger/(f_0T_1-f_1,\dots,f_0T_n-f_n)
\right).
\end{array}
$$\item The \emph{associated rational domain (resp. dagger rational domain)} is the morphism
$$
\begin{array}{c}
D(\rho_0,f_0|\rho_1,f_1,\dots,\rho_n,f_n):=
\Mc(A\{\rho_0,f_0|\rho_1,f_1,\dots,\rho_n,f_n\})
\longrightarrow
\Mc(A)\\
\left(\textrm{resp. }
D^\dagger(\rho_0,f_0|\rho_1,f_1,\dots,\rho_n,f_n):=
\Mc(A\{\rho_0,f_0|\rho_1,f_1,\dots,\rho_n,f_n\}^\dagger)
\longrightarrow
\Mc(A)
\right).
\end{array}
$$
\item The family of morphisms
$$
\begin{array}{c}
\left\{D(f_i,\rho_i|f_0,\rho_0\dots,\widehat{f_i,\rho_i},\dots,f_n,\rho_n)\longrightarrow
\Mc(A)\right\}_i\\
\left(\textrm{resp. }
\left\{D^\dagger(f_i,\rho_i|f_0,\rho_0\dots,\widehat{f_i,\rho_i},\dots,f_n,\rho_n)\longrightarrow
\Mc(A)\right\}_i
\right)
\end{array}
$$
is called the \emph{standard covering} associated to the family $\{(f_i,\rho_i)\}_{i=0,\dots,n}$.
\item A dagger rational domain algebra (resp. rational domain covering) is called \emph{strict} if
the polyradius $\rho=(\rho_0,\dots,\rho_n)$ has all components equal to $1$.
\end{enumerate}
\end{definition}

It is clear from the definition and properties of the coproduct of ind-Banach and
uniform ind-Banach rings that the pushout of a rational domain (resp. uniform rational domain)
algebra along an arbitrary morphism of ind-Banach (resp. uniform ind-Banach)
rings is again a rational domain (resp. uniform rational domain) algebra,
and that a rational domain (resp. uniform rational domain) algebra
on a rational domain (resp. uniform rational domain) algebra over $R$ is a
rational domain (resp. uniform rational domain) algebra over $R$.
The same results are also true in the overconvergent setting.

We will denote $\RatAlg_R^{an}$ (resp. $\RatAlg^\dagger_R$)
the category of uniform ind-Banach algebras that are isomorphic to uniform rational domain algebras
over power series algebras $R\{\nu^{-1}T\}$ (resp. uniform overconvergent rational domain
algebras over overconvergent power series algebras $R\{\nu^{-1}T\}^\dagger$) for various
multiradii $\nu$.
We will denote $\RatAlg^{an,s}_R$ (resp. $\RatAlg^{\dagger,s}_R$)
the subcategory of strict rational domain algebras (resp. strict dagger rational domain
algebras) over power series (resp. overconvergent power series) algebras
of polyradii $(1,\dots,1)$. We will also denote $\RatAlg_R^{\dagger^1}$ the category
of ind-Banach algebras that are isomorphic to dagger rational domain algebras over
overconvergent power series algebras $R\langle\nu^{-1}T\rangle^\dagger$ for
various multiradii $\nu$ and $\RatAlg^{\dagger^1,s}_R$ the subcategory of
strict algebras.
To treat all these categories in a unified formalism, we introduce the following
notation.
\begin{definition}
A \emph{type of analytic spaces} is an element $t$ of the set
$$\{an,\{an,s\},\dagger,\{\dagger,s\},\{\dagger^1\},\{\dagger^1,s\}\}.$$
The category of rational domain $t$-algebras is denoted $\RatAlg^t_R$.
If $t$ is a type of analytic spaces and $A$ is an ind-Banach ring,
we denote $A(\rho^{-1}T)^t$ the algebra of $t$-convergent power series over $A$
and $A(\rho_0,f_0|\rho_1,f_1,\dots,\rho_n,f_n)^t$ the standard $t$-rational
domain algebra.
\end{definition}
For example, we will have
$$
\begin{array}{rcl}
A(T)^{an} & := & A\{T\},\\
A(\rho^{-1}T)^{\dagger} & :=	& A\{\rho^{-1}T\}^\dagger,\\
A(\rho_0,f_0|\rho_1,f_1,\dots,\rho_n,f_n)^{\dagger^1} & := & A\langle \rho_0,f_0|\rho_1,f_1,\dots,\rho_n,f_n\rangle^\dagger,
\end{array}
$$
and the notation $A(\rho^{-1}T)^{an,s}$ means that $\rho=1$ and that we work with $A\{T\}$.

\begin{proposition}
\label{uniform-completion-rational}
The uniformization and Banach completion functors give a
natural diagram
$$
\xymatrix{
\RatAlg_R^{\dagger^1,s}\ar[d]\ar[r]^u 	&
\RatAlg_R^{\dagger,s}\ar[d]\ar[r]		& \RatAlg_R^{an,s}\ar[d]\\
\RatAlg_R^{\dagger^1}\ar[r]^u			&
\RatAlg_R^\dagger\ar[r]				& \RatAlg_R^{an}}
$$
whose arrows commute with finite coproducts and pushouts along
rational domain embeddings and whose vertical arrows are fully faithful.
If $R$ is a uniform ind-Banach ring, the uniformization arrows $u$ are equivalences.
\end{proposition}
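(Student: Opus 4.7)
The plan is to reduce every claim to the universal properties of the power-series algebras $R(\rho^{-1}T)^t$ and of rational domain algebras established in the preceding subsection, combined with two facts already available: the uniformization functor $U$ commutes with finite colimits, and by Proposition \ref{indization-commute-colimits} the indization functor commutes with finite colimits. First I would make each functor explicit on a rational domain algebra: writing $A = R\langle \rho^{-1}T\rangle^\dagger/I = \colim_{\nu>\rho} R\langle \nu^{-1}T\rangle/I$, applying $u$ termwise yields $\colim_{\nu>\rho} R\{\nu^{-1}T\}/I = R\{\rho^{-1}T\}^\dagger/I$, the corresponding uniform overconvergent rational domain algebra; the same argument with Banach completion sends the filtered colimit to the Banach rational domain algebra $R\{\rho^{-1}T\}/I$. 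The vertical arrows are tautologically well-defined as the inclusions of the strict subcategories.

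For commutation with finite coproducts and with pushouts along rational domain embeddings, I would note that both are finite colimits in $\RatAlg^t_R$; the horizontal compatibility then follows from the fact that $U$ and indization preserve finite colimits, applied to the basic coproduct isomorphism
$$
R(\rho^{-1}T)^t/I \otimes_R R(\nu^{-1}S)^t/J \cong R(\rho^{-1}T,\nu^{-1}S)^t/(I,J)
$$
and to its analogue for rational domain embeddings. The vertical compatibility is immediate, since the coproduct of two strict algebras (with all defining polyradii equal to $1$) is again strict. Full faithfulness of the vertical arrows is also automatic: by definition a $t$-morphism is a morphism of underlying (ind-)Banach $R$-algebras and does not remember the polyradii used to present either side, so the inclusion of a full subcategory is fully faithful.

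The main work is the equivalence claim when $R$ is uniform. Essential surjectivity of $u : \RatAlg_R^{\dagger^1} \to \RatAlg_R^\dagger$ (and its strict analogue) follows from the explicit formula above: every uniform dagger rational domain algebra is $u$ of the corresponding $\ell^1$-overconvergent one. The crux is full faithfulness. Using the universal property of the overconvergent power-series rings, both $\Hom(A,B)$ and $\Hom(u(A),u(B))$ classify finite families of elements of $\Ringrm(B)$ satisfying the defining relations of $A$ together with suitable over-bounds on their norms. By the lemma preceding this proposition, any bounded morphism between uniform weakly seminormed rings is automatically contracting, so when the target $B$ is uniform an $\ell^1$-contracting datum and a uniform-contracting datum determine the same family of elements; this identifies the two $\Hom$-sets. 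I expect this last identification of morphism sets to be the main obstacle, since it is here that uniformity of the base $R$, rather than merely of the algebras involved, plays an essential role through the previous ``$n$-th roots of unity'' argument.
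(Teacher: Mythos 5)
Your handling of the formal claims agrees with the paper, which likewise treats the compatibility with finite coproducts and pushouts along rational domain embeddings, the full faithfulness of the vertical inclusions, and the essential surjectivity of $u$ as consequences of the universal properties and of the fact that indization and uniformization preserve finite colimits. The gap is in your argument for full faithfulness of $u$, which is precisely the one non-formal point. You invoke the lemma that a bounded morphism between \emph{uniform} weakly seminormed rings is automatically contracting, but that lemma is not applicable here: the objects of $\RatAlg_R^{\dagger^1}$ are ind-objects built from the Banach rings $R\langle\nu^{-1}T\rangle$ carrying the $\ell^1$-norm, which is not power-multiplicative, so these rings are not uniform and the ``$n$-th roots'' argument says nothing about morphisms out of them. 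Identifying $\Hom(A,B)$ with $\Hom(u(A),u(B))$ via the universal property requires showing that an element whose \emph{uniform} norms are bounded by every $\rho'>\rho$ already admits an $\ell^1$-type bound on some slightly larger polyradius; that is a genuine norm-comparison (Cauchy-estimate) statement about the two filtrations, not a consequence of power-multiplicativity of the target, and nothing in your sketch supplies it.

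The paper closes exactly this gap by citing Poineau's Corollary 1.8 (and Bambozzi--Ben-Bassat, Theorem 8.2, in the case $R=\Zbf$): for $\rho<\nu<\mu$ there is a restriction morphism between the power-series rings of the two types which interleaves the filtered systems $\{R\langle\nu^{-1}T\rangle\}_{\nu>\rho}$ and $\{R\{\nu^{-1}T\}\}_{\nu>\rho}$, hence produces an inverse to the canonical comparison map of ind-objects $R\langle\rho^{-1}T\rangle^\dagger\leftrightarrow R\{\rho^{-1}T\}^\dagger$. Over $\Cbf$ this is the classical fact that a series uniformly convergent on a polydisc of radius $\mu$ is absolutely convergent on any polydisc of radius $\nu<\mu$ by the Cauchy estimates on its coefficients; over a general uniform Banach ring it is the cited nontrivial result, and it is also where uniformity of $R$ enters. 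Without this interleaving your proof of the equivalence does not close; the remaining parts of your proposal are fine.
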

\begin{proof}
The only non-formal point is the isomorphism statement.
It follows from Poineau's article \cite{Poineau2}, Corollary 1.8 (see also Bambozzi and
Ben-Bassat \cite{Bambozzi-Ben-Bassat}, Theorem 8.2 in the case $R=\Zbf$). The basic
idea is to show that for $\rho<\nu<\mu$, there is a restriction morphism
$$R\langle\mu^{-1}T\rangle\to R\{\nu^{-1}T\},$$
which will induce the inverse morphism of the canonical morphism
$$R\{\rho^{-1}T\}^\dagger\to R\langle\rho^{-1}T\rangle^\dagger.$$
\end{proof}

\begin{example}
\label{examples-rational-domain-algebras}
Let $(R,|\cdot|)$ be a uniform Banach ring.
\begin{enumerate}
\item If $R$ is an integral ring equipped with the trivial seminorm $|\cdot|_0$,
the strictly convergent or overconvergent power series are given by polynomials, and
we will see in Proposition \ref{strict-rational-algebraic} that strict
rational domain algebras are given by localizations.
\item One may look at $\Z_p$ and $\Q_p$ as (non-strict) rational domain algebras
over the base Banach ring $R=\Zbf_0:=(\Z,|\cdot|_0)$, by using the formulas
$$
\Z_p=\Oc(\{2\cdot|p|\leq |1|\}):=\Zbf_0\{2T\}/(T-p)
$$
and
$$
\Q_p=\Oc(\{2\cdot |p|\leq |1|,\;|1|\leq 3\cdot |p|\}):=\Zbf_0\{2T,(1/3)S\}/(T-p,pS-1).
$$
This will play an essential role in the analytic derived de Rham cohomology approach to
$p$-adic period rings, that we will discuss in Section \ref{derived-dagger-analytic-geometry}.
\item Over $R=\Qbf_p=(\Q_p,|\cdot|_p)$, the above notions of rational domains give back
the usual rational domains of Berkovich's geometry
\cite{Berkovich1},
and the strict rational domains of Tate's rigid analytic geometry \cite{Tate2}.
Their overconvergent analogs were already used by Gro{\ss}e-Kl\"onne \cite{Grosse-Kloenne}.
\item Over $R=\Cbf=(\C,|\cdot|_\infty)$, we find back essentially the same strict overconvergent
affinoid rational domains as those used by Bambozzi in his thesis \cite{Bambozzi}.
\end{enumerate}
\end{example}

\begin{example}
\label{examples-global}
Let $\Zbf:=(\Z,|\cdot|_\infty)$ be the Banach ring of integers with its archimedean absolute value.
\begin{enumerate}
\item The strict convergent (resp. overconvergent) power series are given by polynomials,
equipped with the sup norm (resp. over-seminorms of the sup norm) on the unit polydisc.
Non strict rational domain algebras over them include
ind-Banach rings like $\Z_p$ and $\Q_p$ (isomorphic to the ind-Banach rings
$\Z_p$ and $\Q_p$ described in Example \ref{examples-rational-domain-algebras}),
but also the ind-Banach ring
$$\R=\Oc(\{2\cdot|1|\leq |2|\}):=\Zbf\{2T\}/(2T-1).$$
\item Strict rational domains over the polynomial ring $\Zbf[X]$ (equipped with its sup norm
on the global unit disc $D(0,1)_\Z$), will be for example given by
$$\{|nX|\leq |1|\},\textrm{ and more generally }\{|qsX-ps|\leq |r|\},$$
with $(n,p,q,r,s)\in \N^5$ such that $qsX-ps$ and $r$ are relatively prime in $\Z[X]$.
The archimedean fiber of these two examples give the disc $D(0,1/n)$ and
(when $s$ and $q$ are non-zero) the discs $D(p/q,r/s)$ with arbitrary rational center
and arbitrary rational radius, intersected with the unit disc. Remark that this intersection
may be empty.
\item The natural map $\Zbf\{X_0,1/X_0\}^\dagger\to \Zbf\{X_1,1/X_1\}^\dagger$ given by
$X_0\mapsto 1/X_1$ gives a well defined isomorphism between the rational domains
$\{1\leq |X_0|\leq 1\}$ and $\{1\leq |X_1|\leq 1\}$, with underlying algebras of functions
the Zariski domain algebras $\Z[X_0,1/X_0]$ and $\Z[X_1,1/X_1]$ on the polynomial
algebras $\Z[X_0]$ and $\Z[X_1]$. We will see in Proposition \ref{projective-Arakelov} that
this allows us to put on the algebraic  projective space $\Proj(\Z[X_1,X_1])$ the structure of
a strict dagger analytic space over the Banach ring $\Zbf$, giving a kind of additional
``Arakelov structure'' on it.
\item The natural ``multiplicative'' comultiplication $\Delta_m$ on $\Z[X_0,1/X_0]$ naturally
extends to a comultiplication $\Delta_m^{\dagger}$ on $\Zbf\{X_0,1/X_0\}^\dagger$.
We will call the corresponding analytic group over $\Zbf$ simply $U(1)$. Over
the non-archimedean base $\Zbf_0=(\Z,|\cdot|_0)$, one has $U(1)=\G_m$, but
over $\R$, we get $U(1)_\R$ and over $\Q_p$, we also get $U(1)_{\Q_p}$.
This may be quite disturbing to an algebraic geometer that the natural base
extension (i.e., analytification) of the algebraic multiplicative group
$\G_m$ to $K=\R$ or $\Q_p$ is not $\G_{m,K}^{an}$, but $U(1)_{K}$.
\end{enumerate}
\end{example}

\begin{proposition}
\label{rational-pre-geometries}
The opposite category to $\RatAlg_R^t$ for
$t\in \{an,\{an,s\},\dagger,\{\dagger,s\},\dagger^1,\{\dagger^1,s\}\}$
a type of analytic spaces, equipped with the admissible subcategories of rational domain
embeddings and the Grothendieck topology generated by rational domain coverings
is a pre-geometry in the sense of Lurie \cite{Lurie-DAG-V}, Definition 3.1.1.
\end{proposition}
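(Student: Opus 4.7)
The plan is to check each of the axioms in Lurie's Definition 3.1.1 (DAG-V) for the pair
$(\RatAlg_R^{t,op}, \text{rational domain embeddings})$ equipped with the topology generated by standard coverings, treating all six values of $t$ in parallel since the arguments are formally identical.

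First I would verify that $\RatAlg_R^{t,op}$ admits finite products, i.e.\ that $\RatAlg_R^t$ admits finite coproducts. The terminal object of $\RatAlg_R^{t,op}$ is $R$ (which is the empty rational domain $t$-algebra over itself). For binary coproducts, given two rational domain $t$-algebras $A(\rho_0,f_0|\ldots)^t$ and $A(\nu_0,g_0|\ldots)^t$ over $A$, I would use the identity $R(\rho^{-1}T)^t \otimes_R R(\nu^{-1}S)^t \cong R(\rho^{-1}T,\nu^{-1}S)^t$ together with the compatibility of quotients with tensor products (Proposition \ref{indization-commute-colimits}, since indization commutes with finite colimits) to identify the coproduct with another rational domain $t$-algebra. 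For mixed $t$, Proposition \ref{uniform-completion-rational} lets me transfer between uniformized and non-uniformized versions.

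Second, I would check the admissibility axioms for the class of rational domain embeddings. Closure under composition is the statement already recorded in the text that a rational domain $t$-algebra over a rational domain $t$-algebra over $R$ is again one over $R$; this follows from the explicit presentation, since stacking generators $(f_i,\rho_i)$ and $(g_j,\nu_j)$ yields another finite standard presentation after using base change along $\Zbf \to R$. Stability under pullback along an arbitrary morphism in $\RatAlg_R^{t,op}$ is the dual statement that pushouts of rational domain embeddings are rational domain embeddings, which is the base-change formula $S(X)^t \cong R(X)^t \otimes_R S$ combined with quotient compatibility. The inclusion of equivalences and the 2-out-of-3 property for composable pairs follow directly from these closure properties.

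Third, I would verify that the Grothendieck topology generated by standard coverings $\{D^t(f_i,\rho_i|\ldots) \to \Mc(A)\}_i$ satisfies the required axioms: every covering sieve contains a basic covering by rational domain embeddings (by construction) and pullback-stability, which follows from the pullback formula for rational domain algebras together with the observation that the images of a generating family $\{(f_i,\rho_i)\}$ under a morphism $A \to B$ remain a generating family in $B$ (up to passing to a suitable refinement by including units obtained after base change). The axioms involving refinement and local character are then formal consequences of the fact that the basic coverings are given by standard coverings which visibly compose.

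The main obstacle I anticipate is the pullback-stability of the topology: while the pushout of a single rational domain algebra along an arbitrary morphism of (uniform) ind-Banach rings is a rational domain algebra of the same standard shape, one must check that the pushed-forward generators still generate the unit ideal (so that one still has a genuine standard covering and not merely a family of embeddings). This uses that generation by a finite family of elements is preserved under the morphism $\Ringrm(A) \to \Ringrm(B)$ of underlying rings, together with the extra flexibility of adjusting the radii $\rho_i$ which is available in all six types $t$.
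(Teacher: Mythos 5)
Your overall strategy (verify Lurie's axioms one by one, using the already-established facts that finite coproducts exist and that pushouts along rational domain embeddings are again rational domain embeddings) matches the paper's, and your treatment of finite products, pullback stability, and the covering axioms is fine. But there are two genuine gaps where you dismiss precisely the parts that carry the real content of the paper's proof.

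First, you claim that ``the 2-out-of-3 property for composable pairs follow[s] directly from these closure properties.'' It does not. The axiom in question is left cancellation: if $g$ and $h=g\circ f$ are admissible, then $f$ is admissible. This is not a formal consequence of closure under composition and pullback; the paper proves it by an explicit computation. Given $R\to R\{\rho^{-1}T\}/(sT-r)\xrightarrow{f} R\{\nu^{-1}S\}/(uS-v)$ with the outer and first arrows rational domain embeddings, one base-changes the presentation of the composite along $g$ to get a map
$$\left(R\{\rho^{-1}T\}/(sT-r)\right)\{\nu^{-1}S\}/(uS-v,\,T-f(T))\longrightarrow R\{\nu^{-1}S\}/(uS-v)$$
and checks it is an isomorphism, thereby exhibiting $f$ itself as a rational domain embedding over its source. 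Some argument of this shape is required and cannot be waved away.

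Second, you omit entirely the axiom that admissible morphisms are stable under retracts (equivalently, the idempotent-completeness requirements in Lurie's Definition 1.2.1/3.1.1). The second half of the paper's proof is devoted to this: given a retraction diagram exhibiting $B\to B'$ as a retract of a rational domain embedding $R\to R\{\rho^{-1}T\}/(gT-f)$, one constructs the candidate rational domain algebra $A\{\rho^{-1}T\}/(r(g)T-r(f))$ over the retract and verifies that the induced maps are mutually inverse. Without this axiom you have not verified that you have an admissibility structure at all. I would also note that your worry about pullback stability of coverings (whether the pushed-forward generators still generate the unit ideal) is legitimate and correctly resolved, but it is the easier half of the verification; the cancellation and retract axioms are where the work lies.
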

\begin{proof}
To simplify notations, we will write the proof for the case $t=an$, and it applies
directly to other cases by replacing analytic power series $R\{\rho^{-1}T\}$ by
$t$-convergent power series $R(\rho^{-1}T)^t$.
We already know that there are finite coproducts and pushout diagrams along
rational domain embeddings.
Let us show that if a triangle
$$
\xymatrix{&Y\ar[dr]^g&\\
X\ar[ur]^f\ar[rr]^h&&Z}
$$
of rational domains is such that $g$ and $h$ are rational domain embeddings, then $f$
is also a rational domain embedding. Indeed, suppose we have a commutative
diagram (multi-index notation)
$$
\xymatrix{&R\{\rho^{-1}T\}/(sT-r)\ar[dr]^f&\\
R\ar[ur]^g\ar[rr]^h&&R\{\nu^{-1}S\}/(uS-v)}
$$
Then we get a natural morphism
$$
\left(R\{\rho^{-1}T\}/(sT-r)\right)\{\nu^{-1}S\}/(uS-v)\longrightarrow R\{\nu^{-1}S\}/(uS-v)
$$
that induces a natural morphism
$$
R\{\rho^{-1}T\}/(sT-r)\{\nu^{-1}S\}/(uS-v,T-f(T))\longrightarrow R\{\nu^{-1}S\}/(uS-v).
$$
This last map is an isomorphism, so that $f$ is indeed a rational domain embedding.
This argument extends to the overconvergent setting.
Let us now show that every retract of a rational domain embedding is a rational domain embedding.
Suppose given a retraction diagram
$$
\xymatrix{
A\ar[r]^i\ar[d]\ar@/^1.5pc/[rr]^{\id}	& R\ar[r]^r\ar[d]				& A\ar[d]\\
B\ar[r]^(0.22)i\ar@/_1.5pc/[rr]_{\id}		& R\{\rho^{-1}T\}/(gT-f)\ar[r]^(0.75)r	& B}
$$
By construction, there is a canonical retraction
$$
\xymatrix{
B\ar[r]_(0.22){i_1}\ar@/^1.8pc/[rr]^{\id}		& A\{\rho^{-1}T\}/(r(g)T-r(f))\ar[r]_(0.75){r_1}	& B}
$$
where the map $i_1$ is given by the composition
$$
B\overset{i}{\longrightarrow}R\{\rho^{-1}T\}/(gT-f)\overset{r}{\longrightarrow} A\{\rho^{-1}T\}/(r(g)T-r(f))
$$
and the map $r_1$ is given by the universal property of the quotient.
It remains to show that $i_1\circ r_1=\id$ to get that $i_1$ and $r_1$ are inverse isomorphisms,
which will finish the proof.
\end{proof}

For $t\in \{an,\{an,s\},\dagger,\{\dagger,s\},\dagger^1,\{\dagger^1,s\}\}$ a type of analytic spaces,
the ``underlying ring'' functor
$$\Ringrm:\ind\BanRings\to \Rings$$
restricts naturally to an ``underlying $R$-algebra'' functor
$$\Algrm:\RatAlg_R^t\to \Alg_R.$$
If $f:R\to S$ is a morphism of ind-Banach algebras, there are natural
base extension functor
$$\textrm{-}\otimes_R S:\RatAlg^t_R\to \RatAlg^t_S.$$
For $u\in \{an,\dagger\}$, there is also a natural fully faithful
embedding
$$\RatAlg^{u,s}_R\to \RatAlg^u_R.$$

We will now introduce the categories of analytic and dagger algebras, that will allow
the use of nilpotent elements, which is not possible with uniform ind-Banach rings.
We carefully inform the reader that the category $\Sets$ used in this definition
is a category of small enough sets.
\begin{definition}
For $t\in \{an,\dagger,\dagger^1\}$ a non-strict type of analytic spaces,
a \emph{$t$-algebra (resp. a very strict $t$-algebra)} will be a functor
$$
A:(\RatAlg^t_R)^{op}\to \Sets\;\textrm{ (resp. }A:(\RatAlg^{t,s}_R)^{op}\to \Sets\textrm{)}
$$
that sends finite coproducts of algebras to products, and pushout diagrams of the form
$$
\xymatrix{
A_1\ar[r]^i\ar[d] 	& A_2\ar[d]\\
A_3\ar[r]		& A_4}
$$
where $i:A_1\to A_2$ corresponds to the embedding of a rational sub-domain,
to pullbacks.
We denote $\Alg^t_R$ the category of $t$-algebras and $\Alg^{t,vs}_R$
the category of very strict $t$-algebras. Objects of
$\Alg^{an}_R$ (resp. $\Alg^{\dagger}_R$, resp. $\Alg^{\dagger^1}_R$)
will be called \emph{analytic (resp. uniform dagger, resp. dagger) algebras}
over $R$.
\end{definition}

We have thus introduced three new (very strict) types of analytic
spaces $\{an,vs\}$, $\{\dagger,vs\}$ and $\{\dagger^1,vs\}$.

For $t\in \{an,\dagger,\dagger^1\}$ a non-strict type of analytic spaces,
there is are natural Yoneda embeddings
$$\RatAlg_R^t\to \Alg^t_R\textrm{ and }\RatAlg_R^{t,s}\to \Alg^{t,vs}_R$$
that commutes by definition with finite coproducts and pushout diagrams of the form
$$
\xymatrix{
A_1\ar[r]^i\ar[d] 	& A_2\ar[d]\\
A_3\ar[r]			& A_4}
$$
where $i$ corresponds to an embedding of a rational sub-domain.

\begin{proposition}
Let $t\in \{an,\{an,vs\},\dagger,\{\dagger,vs\},\dagger^1,\{\dagger^1,vs\}\}$ be
a type of analytic spaces.
If $f:R\to S$ is a morphism of ind-Banach algebras,
there is a natural base extension functor
$$\textrm{-}\otimes_R S:\Alg^t_R\to \Alg^t_S.$$
\end{proposition}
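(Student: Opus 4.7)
The plan is to leverage the pre-geometry structure established in Proposition \ref{rational-pre-geometries} on both $(\RatAlg^t_R)^{op}$ and $(\RatAlg^t_S)^{op}$ together with a base extension functor at the level of rational domain algebras, and then invoke a standard adjointness argument in the spirit of Lurie's formalism for morphisms of pre-geometries.

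First I would establish the base extension functor $\phi : \RatAlg^t_R \to \RatAlg^t_S$, $C \mapsto C \otimes_R S$. This relies on the compatibility of rational domain algebras with base extension of the base ind-Banach ring: for any polyradius $\rho$ one has $R(\rho^{-1}T)^t \otimes_R S \cong S(\rho^{-1}T)^t$, and since $\otimes_R S$ is left adjoint to restriction of scalars it commutes with quotients by finitely generated ideals, producing a rational domain algebra over $S$. By the same base-change computation, $\phi$ preserves finite coproducts and pushouts along rational domain embeddings. Consequently, $\phi^{op} : (\RatAlg^t_R)^{op} \to (\RatAlg^t_S)^{op}$ is a morphism of pre-geometries, and precomposition gives a well-defined restriction functor $\phi^* : \Alg^t_S \to \Alg^t_R$ via $(\phi^* A')(C) := A'(C \otimes_R S)$; the $t$-algebra axioms for $\phi^* A'$ are inherited from those of $A'$ using the preservation properties of $\phi$.

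I would then define $- \otimes_R S : \Alg^t_R \to \Alg^t_S$ as a left adjoint to $\phi^*$, constructed explicitly as the pointwise left Kan extension
$$
(A \otimes_R S)(B) \; := \; \colim_{(C,\; \phi(C) \to B)} A(C),
$$
followed, if necessary, by reflection onto the subcategory $\Alg^t_S \hookrightarrow \mathrm{Fun}((\RatAlg^t_S)^{op}, \Sets)$. On Yoneda-representable $t$-algebras this specializes to $A_C \otimes_R S \cong A_{C \otimes_R S}$, as expected, and functoriality of $- \otimes_R S$ in $A$, as well as its compatibility with compositions of bounded morphisms of ind-Banach rings, follows automatically from the universal property of the adjunction.

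The main obstacle is verifying that the pointwise Kan extension already lies in $\Alg^t_S$, or, failing that, that the reflection onto $\Alg^t_S$ is well-defined. This is a sheaf-theoretic check that crucially uses the fact that $\phi^{op}$ preserves admissible morphisms, their pullbacks, and covering families, so that the finite product and admissible-pullback conditions defining $\Alg^t_S$ are stable under passing to the colimit and to the reflection. Once this sheaf-theoretic step is carried out, the existence of the natural base extension functor is immediate.
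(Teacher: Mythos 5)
Your construction is essentially the paper's own: the paper presents $A$ as a colimit of representables $\Hom_{\RatAlg^t_R}(-,A_i)$ and defines $A\otimes_R S$ as the colimit of $\Hom(-,A_i\otimes_R S)$ taken in $\Alg^t_S$, which is exactly your left Kan extension along $\phi^{op}$ followed by reflection into $\Alg^t_S$. Your extra care about $\phi$ preserving coproducts and pushouts along rational domain embeddings, and about the colimit landing in $\Alg^t_S$, makes explicit what the paper leaves implicit (it relies on the earlier observation that base extension preserves rational domain algebras and these colimit diagrams), so the proposal is correct and follows the same route.
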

\begin{proof}
The base extension functor is given by the following construction:
every $A$ in $\Alg^t_R$ may be presented
as a colimit of representable functors
$$
A=\colim_i \Hom_{\RatAlg^t_R}(-,A_i)=
\colim_i\Hom_{\ind\BanRings_{R\backslash}}(-,A_i),
$$
with $A_i\in \RatAlg^t_R$.
One then defines $A\otimes_R S$ by
$$A\otimes_R S:=\colim_i \Hom_{\ind\BanRings_{S\backslash}}(-,A_i\otimes_R S),$$
where the colimit is taken in the category $\Alg^t_S$.
\end{proof}
By construction,
the diagram
$$
\xymatrix{
\RatAlg^t_R\ar[rr]^{-\otimes_R S}\ar[d]	&& \RatAlg^t_S\ar[d]\\
\Alg^t_R\ar[rr]^{-\otimes_R S}			&& \Alg^t_S}
$$
commutes.

The following lemma will imply that strict analytic geometry on a
trivially seminormed ring is essentially equivalent to scheme theory.
\begin{proposition}
\label{strict-rational-algebraic}
If $R$ is an integral ring with its trivial seminorm, then the categories
of very strict dagger and of very strict analytic algebras are equivalent to the category of
usual $R$-algebras, and rational domains are given by localizations.
\end{proposition}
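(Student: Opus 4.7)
My plan is to reduce to the classical fact that $R$-algebras are exactly the functors on finitely generated polynomial $R$-algebras preserving finite products, via two steps: (i) identifying strict rational domain algebras with principal localizations of polynomial rings, and (ii) showing that the pullback preservation built into the definition of a very strict algebra is automatic for representable functors and, conversely, determines the whole functor. First, for $R$ trivially seminormed and $\rho \geq 1$, a power series $\sum a_\alpha T^\alpha \in R\{\rho^{-1}T\}$ has uniform Banach norm $\sup_\alpha |a_\alpha|_0 \rho^\alpha$; since $|a_\alpha|_0 \in \{0,1\}$ and $\rho^\alpha \geq 1$, the Banach condition forces $a_\alpha = 0$ for all but finitely many $\alpha$, so $R\{\rho^{-1}T\} = R[T]$, and $R\{\rho^{-1}T\}^\dagger = \colim_{\nu > \rho} R\{\nu^{-1}T\}$ has the same underlying ring $R[T]$. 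In particular $\RatAlg^{an,s}_R$ and $\RatAlg^{\dagger,s}_R$ coincide as abstract categories. A strict rational domain algebra is $R[T_1,\dots,T_n,S_1,\dots,S_k]/(f_0S_j - f_j)_j$ with $(f_0,\dots,f_k)$ generating the unit ideal: writing $1 = \sum g_i f_i$ and substituting $f_j = f_0S_j$ in the quotient yields $f_0(g_0 + \sum_j g_jS_j) = 1$, so $f_0$ becomes invertible and $S_j = f_j/f_0$, identifying the algebra with the principal localization $R[T_1,\dots,T_n][1/f_0]$. Conversely every such localization arises (take $k=1$, $f_1=1$). This proves the ``localizations'' half of the statement and identifies $\RatAlg^{t,s}_R$ with the full subcategory $\Loc_R \subset \Alg_R$ of principal localizations of finitely generated polynomial $R$-algebras, the admissible embeddings being the localization maps themselves.

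I would then define $\Phi : \Alg_R \to \Alg^{t,vs}_R$ by $\Phi(A)(B) := \Hom_{\Alg_R}(B,A)$, which lands in $\Alg^{t,vs}_R$ automatically because representable presheaves preserve all limits and send pushouts (in particular those along admissible embeddings) to pullbacks; and $\Psi : \Alg^{t,vs}_R \to \Alg_R$ by $\Psi(F) := F(R[T])$, with $R$-algebra structure reconstructed from the cogebra operations of $R[T]$ in $\Loc_R^{op}$ --- addition from $R[T] \to R[T_1,T_2]$, $T \mapsto T_1+T_2$; multiplication from $T \mapsto T_1T_2$; the scalar action from $T \mapsto rT$ --- via the product-preservation isomorphism $F(R[T_1,T_2]) \cong F(R[T])^2$. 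The identity $\Psi\Phi(A) = \Hom_{\Alg_R}(R[T],A) = A$ is immediate, so the content lies in the reverse composition.

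The content, and what I expect to be the main obstacle, is proving $\Phi\Psi(F) \cong F$, namely $F(B) \cong \Hom_{\Alg_R}(B, F(R[T]))$ naturally in $B \in \Loc_R$. Polynomial rings are handled by product preservation. For $B = R[T_1,\dots,T_n][1/f]$ I would realise $B$ as the pushout in $\Loc_R$ of the span $R[S,S^{-1}] \leftarrow R[S] \to R[T_1,\dots,T_n]$ sending $S \mapsto f$, whose left leg is an admissible embedding, so pullback preservation yields $F(B) \cong F(R[T_1,\dots,T_n]) \times_{F(R[S])} F(R[S,S^{-1}])$. The key technical step is the \emph{units lemma}: $F(R[S,S^{-1}]) \hookrightarrow F(R[S])$ injects with image the multiplicatively invertible elements. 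Injectivity follows by applying pullback preservation to the self-pushout $R[S,S^{-1}] \otimes_{R[S]} R[S,S^{-1}] = R[S,S^{-1}]$, the reduction using that $R[S] \to R[S,S^{-1}]$ is an epi in $\Alg_R$; the image description uses the comorphism $R[S] \to R[S,S^{-1}] \otimes_{R[S]} R[S,S^{-1}]$, $S \mapsto S \otimes S^{-1}$, combined with the multiplicative structure already recovered on $F(R[T])$. Granted this lemma, the pullback computes to $\{x \in F(R[T])^n : f(x) \in F(R[T])^\times\} = \Hom_{\Alg_R}(B, F(R[T]))$, closing the equivalence; the dagger and analytic cases coincide by the first paragraph.
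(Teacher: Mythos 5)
Your overall strategy coincides with the paper's: first identify strict rational domain algebras over a trivially seminormed integral ring with principal localizations of polynomial rings, then invoke the Lawvere-style correspondence between product-preserving functors on polynomial algebras and ordinary $R$-algebras. Where you differ is in the execution of both halves, and in both cases you supply more than the paper does. For the first half the paper argues via the Berkovich spectrum (the spectral seminorm on the polydisc is trivial, so the inequalities $|f_i(x)|\leq|f_0(x)|$ collapse to $f_0(x)\neq 0$), whereas your substitution $1=\sum g_if_i$, $f_j=f_0S_j$ gives a purely algebraic identification of $R[T,S]/(f_0S_j-f_j)$ with $R[T][1/f_0]$; this is more elementary and also directly produces the ring isomorphism rather than only an identification of spectra. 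For the second half the paper simply asserts that a functor sending localization pushouts to pullbacks is determined by its restriction to polynomial rings and cites Lawvere; you actually prove it, correctly isolating the units lemma as the crux and giving a clean argument for injectivity of $F(R[S,S^{-1}])\to F(R[S])$ via the self-pushout $R[S,S^{-1}]\otimes_{R[S]}R[S,S^{-1}]=R[S,S^{-1}]$ and the fact that localization is an epimorphism.

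The one thin spot is the surjectivity of $\iota:F(R[S,S^{-1}])\hookrightarrow F(R[S])=A$ onto $A^\times$. The comorphism you invoke, $S\mapsto S\otimes S^{-1}$ into the self-pushout, lands in a ring where the two copies of $S$ are already identified, so it is just $S\mapsto 1$; together with the recovered multiplication it yields only the \emph{containment} $U:=\mathrm{im}(\iota)\subseteq A^\times$ plus the facts that $1\in U$ and that $U$ is a subgroup closed under roots. That is not enough: $\{\pm 1\}\subset\Q^\times$ satisfies all of these constraints. The missing input is the compatibility between the two presentations of a double localization: $P[1/f][1/g]\cong P[1/(fg)]$ over $P$. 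Applying pullback preservation to both presentations with $P=R[X,Y]$, $f=X$, $g=Y$ gives
$$\{(a,b)\in A^2:\;ab\in U\}=\{(a,b)\in A^2:\;a\in U\textrm{ and }b\in U\},$$
so $ab\in U$ forces $a\in U$; taking $ab=uv=1\in U$ for a unit $u$ then gives $u\in U$, whence $U=A^\times$ and your pullback computation of $F(R[T][1/f])$ closes. With this repair the argument is complete, and it is in fact a genuine improvement on the paper, which leaves the entire reduction to Lawvere's theorem unproved.
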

\begin{proof}
This follows from the fact that rational domain algebras correspond to localization of
polynomial algebras, and that a functor that sends coproducts to products and
localization pushouts to pullbacks on them is equivalent to a functor on polynomial algebras
that sends coproduct to products,
which is well known (since Lawvere \cite{Lawvere-functorial-semantics}) to be
the same thing as an arbitrary algebra.
So let us prove that rational domain algebras in polynomial rings over a trivially
normed integral ring correspond to their localizations. First remark that the
spectral seminorm on the unit polydisc for the polynomial algebra on $R$
is given by the trivial seminorm. Indeed, it is the uniform seminorm associated
to the canonical Banach norm, given by
$$\left|\sum a_\alpha X^\alpha\right|:=\sum |a_\alpha|_0,$$
which gives
$$\left|\sum a_\alpha X^\alpha\right|_0=\max\left(|a_\alpha|_0\right).$$
If $A=R[X_1,\dots,X_n]$ is a polynomial algebra and $f_0,\dots,f_n$
are elements in $A$ such that there exist $a_i\in A$ with
$1=\sum_{i\in I} a_i f_i$ then for every point in $\Mc(A)=\Mc(A,|\cdot|_0)$,
we have $|a_i(x)|\leq |a_i|_0\leq 1$ and
$$1=|1|\leq \max_{i\in I}|f_i(x)|\leq \max_{i\in I}|f_i|_0\leq 1$$
so that
$$\{x,\;|f_i(x)|\leq |f_0(x)|\textrm{ for all }i>0\}=\{x,\;1\leq |f_0(x)|\}.$$
If we look at the corresponding rational domain algebra, this gives the quotient
$$R[X_1,\dots,X_n]\{f_0,1|1,1\}=R[X_1,\dots,X_n,T]/(f_0T-1),$$
equipped with the quotient seminorm of the trivial norm on the polynomial ring,
that is also the trivial norm in this case, so that we get
$$\{x,\;1\leq |f_0(x)|\}=\{x,\;f_0(x)\neq 0\}.$$
so that
$$A\{f_0,1|f_1,1,\dots,f_n,1\}=A\{f_0,1|1,1\}=A[1/f_0].$$
The same argument applies also to the overconvergent setting, since
the Berkovich spectrum does not change.
\end{proof}

For $t\in \{an,\dagger,\dagger^1\}$ a non-strict type of analytic spaces,
the underlying $R$-algebra functor $\Algrm:\RatAlg_R^t\to \Alg_R$ extends naturally
to a functor $\Algrm:\Alg^t_R\to \Alg_R$ given by
$$
\Algrm(A):=\colim_{\rho\to\infty}A(R(\rho^{-1}T)^t)
$$
with $T$ a variable.

\begin{definition}
If $A$ is an ind-Banach $R$-algebra and $t\in \{an,\dagger,\dagger^1\}$
is a type of analytic spaces,
the \emph{$t$-completion of $A$} is the $t$-algebra defined by setting its values on
a rational domain $t$-algebra $B$ in $\RatAlg^t_R$ to be
$$
A^t(B):=\Hom_{\ind\BanRings_{R\backslash}}(B,A).
$$
\end{definition}

By definition, the $t$-completion $A^t$ of $A$ naturally commutes with all pushout
diagrams, and in particular with those needed to be a $t$-algebra over $R$.
This construction thus gives a $t$-completion functor
$$
\begin{array}{cccc}
(\cdot)^t: 	& \ind\BanRings_{R}	& \to 	 & \Alg^t_R\\
		& A				& \mapsto & A^t
\end{array}
$$

\begin{example}
We may apply the dagger completion to the ind-Banach ring $\bar{\Zbf}$ over $(\Z,|\cdot|_\infty)$,
to get a dagger algebra $\bar{\Zbf}^\dagger$ equipped with a natural $\Gal(\bar{\Q}/\Q)$-action.
\end{example}

We may apply the $t$-completion to morphisms $R\to (K,|\cdot|)$ to a
multiplicatively normed Banach field $(K,|\cdot|)$, to define the Berkovich spectrum
of a $t$-algebra.
\begin{definition}
Let $A$ be a $t$-algebra over $R$ for $t\in \{an,\dagger,\dagger^1\}$ a non-strict type
of analytic spaces.
The \emph{berkovich spectrum of $A$} is the set $\Mc(A)$ of equivalence
classes of morphisms
$$\chi:A\to (K,|\cdot|)^t$$
for the family of morphisms $R\to (K,|\cdot|)$ from $R$ to a multiplicatively seminormed field,
together with the coarsest topology that makes the maps
$$\Mc(A)\to \R_+$$
given by $x\mapsto |a(x)|$ for $a\in \Algrm(A)$ continuous.
\end{definition}

The following proposition shows that using spectra of dagger algebras does not give
more general Berkovich spectra. As we said before, the interest is to allow the introduction of 
nilpotent elements, which will be useful for the development of differential calculus.
\begin{proposition}
Let $A$ be a $t$-algebra over $R$ for $t\in \{an,\dagger,\dagger^1\}$ a non-strict type
of analytic spaces.
There exists an ind-Banach ring (that is uniform if $A$ is uniform)
$\ib(A)$ that is initial with the property that there exists a morphism
$$A\to \ib(A)^t.$$
The natural morphism
$$\Mc(\ib(A))\to \Mc(A)$$
is an isomorphism.
\end{proposition}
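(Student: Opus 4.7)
The plan is to construct $\ib(A)$ as a colimit in the category of ind-Banach rings over $R$, and to derive both the universal property and the Berkovich spectrum statement from the resulting adjunction between $\ib$ and $(\cdot)^t$.

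First I would consider the category of elements $\int A$ of the functor $A$: its objects are pairs $(C,a)$ with $C\in\RatAlg^t_R$ and $a\in A(C)$, and its morphisms $(C,a)\to(C',a')$ are morphisms $\phi:C\to C'$ in $\RatAlg^t_R$ such that $A(\phi)(a')=a$. There is a natural forgetful functor $\int A\to \RatAlg^t_R\hookrightarrow \ind\BanRings_R$ sending $(C,a)\mapsto C$. I would define
$$\ib(A):=\colim_{(C,a)\in\int A} C$$
as the colimit in $\ind\BanRings_R$, which exists by Proposition~\ref{indization-commute-colimits}. When $A$ is uniform, the corresponding colimit taken in $\ind\BanRings_{u,R}$ produces a uniform ind-Banach ring.

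Next I would verify the universal property. For every ind-Banach $R$-algebra $B$, the defining property of the colimit gives
$$\Hom_{\ind\BanRings_R}(\ib(A),B)=\lim_{(C,a)}\Hom_{\ind\BanRings_R}(C,B)=\lim_{(C,a)} B^t(C).$$
On the other hand, the co-Yoneda formula expresses $A$ as the colimit $\colim_{(C,a)}\widehat{C}$ of representables in the category of $\Sets$-valued presheaves on $\RatAlg^t_R$, so since $B^t$ is itself a $t$-algebra and $A$ already preserves the limits defining $\Alg^t_R$,
$$\Hom_{\Alg^t_R}(A,B^t)=\lim_{(C,a)} B^t(C).$$
Combining these displays gives a natural bijection $\Hom_{\ind\BanRings_R}(\ib(A),B)\cong \Hom_{\Alg^t_R}(A,B^t)$. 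The image of $\id_{\ib(A)}$ under this bijection is the required unit morphism $A\to\ib(A)^t$, and initiality is then just the adjunction read off at $B=\ib(A)$.

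For the statement about Berkovich spectra, I would apply the adjunction to a multiplicatively seminormed Banach field $(K,|\cdot|)$: morphisms $\ib(A)\to (K,|\cdot|)$ are in natural bijection with morphisms $A\to(K,|\cdot|)^t$, so passing to equivalence classes gives a bijection of underlying sets $\Mc(\ib(A))\to\Mc(A)$. To upgrade this to a homeomorphism I would verify that the topologies coincide: by the explicit description of colimits in $\ind\BanRings$ given by Proposition~\ref{indization-commute-colimits}, the underlying ring $\Ringrm(\ib(A))=\colim_{(C,a)}\Ringrm(C)$ is generated by images of the $\Ringrm(R(\rho^{-1}T)^t)$ appearing in the category of elements, which via Yoneda coincides with $\Algrm(A)=\colim_\rho A(R(\rho^{-1}T)^t)$; hence the two generating families of evaluation maps produce the same coarsest topology.

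The main obstacle is likely to be this last topological verification, as the bijection of underlying point sets is a formal consequence of the adjunction. The key input needed is the fact that the underlying ring functor $\Ringrm:\ind\BanRings_R\to\Rings$ preserves the colimits built from filtered colimits of projective tensor products and quotients, so that $\Ringrm(\ib(A))$ is accessible from the diagram defining $\ib(A)$ in an explicit way that matches $\Algrm(A)$.
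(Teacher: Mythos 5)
Your proposal is correct and follows essentially the same route as the paper: write $A$ as the canonical colimit of representable rational domain algebras over its category of elements, take the corresponding colimit in (uniform) ind-Banach rings to define $\ib(A)$, deduce the universal property from the resulting adjunction, and get the bijection on Berkovich spectra by applying it to Banach fields. The only difference is that you spell out the bicontinuity check via the underlying rings, which the paper dismisses as "clearly bicontinuous."
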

\begin{proof}
By abstract nonsense, the $t$-algebra $A$ is a canonical colimit of (representable)
rational domain algebras $A_i$. Setting $\ib(A)$ to be the colimit of these rational domain algebras
$A_i$ in the category of uniform ind-Banach rings, we find that $\ib(A)$ has the desired universal
property. The map $\Mc(\ib(A))\to \Mc(A)$ is a bijection by the universal
property of $\ib(A)$ and it is clearly bicontinuous.
\end{proof}

\begin{definition}
We will say that $A$ is a \emph{uniform} $t$-algebra if the canonical morphism
$$A\to \ib(A)^t$$
is an isomorphism.
\end{definition}

\begin{proposition}
\label{uniform-completion-algebra}
The diagram described in Proposition \ref{uniform-completion-rational} induces
the following diagram of categories of algebras:
$$
\xymatrix{
\Alg_R^{\dagger^1,vs}			 	&
\Alg_R^{\dagger,vs}\ar[l]_u			& \Alg_R^{an,vs}\ar[l]\\
\Alg_R^{\dagger^1}\ar[u]				&
\Alg_R^\dagger\ar[l]_u\ar[u]			& \Alg_R^{an}\ar[u]\ar[l]}
$$
The arrows $u$ are equivalences if $R$ is uniform.
\end{proposition}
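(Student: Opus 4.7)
The proof is essentially formal, reducing to Proposition \ref{uniform-completion-rational} together with the definition of a $t$-algebra as a functor on $(\RatAlg_R^t)^{op}$ that turns finite coproducts into products and pushouts along rational embeddings into pullbacks.

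First I would isolate the following general principle: if $F:\mathcal{C}\to\mathcal{D}$ is a functor commuting with finite coproducts and with pushouts along a distinguished class of morphisms (the rational embeddings), then precomposition with $F^{op}$ sends those functors $\mathcal{D}^{op}\to\Sets$ satisfying the analogous preservation conditions to functors $\mathcal{C}^{op}\to\Sets$ with the same preservation. Concretely, for a coproduct $B_1\sqcup B_2$ in $\mathcal{C}$ and an object $A$ of the target category of algebras, one has
$$(A\circ F^{op})(B_1\sqcup B_2)=A(F(B_1\sqcup B_2))\cong A(F(B_1)\sqcup F(B_2))\cong A(F(B_1))\times A(F(B_2)),$$
and an analogous computation for rational pushouts. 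Hence precomposition restricts to a well-defined functor between the corresponding algebra categories.

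Second I would apply this principle to every arrow of the diagram in Proposition \ref{uniform-completion-rational}: the horizontal uniformizations $u$ and Banach completions, as well as the vertical fully faithful inclusions $\RatAlg_R^{t,s}\hookrightarrow\RatAlg_R^t$. Each of these arrows satisfies the preservation hypothesis by that proposition, so each induces, by precomposition with its opposite, an arrow in the reverse direction between the corresponding algebra categories. These arrows assemble into exactly the diagram stated in the proposition, and commutativity follows from commutativity of the rational algebra diagram together with the (pseudo)functoriality of precomposition.

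Third, for the equivalence claim when $R$ is uniform: Proposition \ref{uniform-completion-rational} tells us that the two uniformization functors $u$ (strict and non-strict) are categorical equivalences. Any such equivalence preserves all finite limits and colimits that exist in its source, so a quasi-inverse of $u$ automatically commutes with finite coproducts and pushouts along rational embeddings as well. Therefore precomposition with $u^{op}$ and with the opposite of a chosen quasi-inverse each restrict to the corresponding algebra categories, giving mutually inverse equivalences $\Alg_R^\dagger\simeq\Alg_R^{\dagger^1}$ and $\Alg_R^{\dagger,vs}\simeq\Alg_R^{\dagger^1,vs}$. The main point that requires attention, rather than a genuine obstacle, is the bookkeeping: one must verify that the image of a rational embedding under each rational algebra functor is again a rational embedding in the target, so that the preservation hypothesis translates correctly to the $\Sets$-valued level. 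This is already implicit in Proposition \ref{uniform-completion-rational}, and once invoked the entire statement reduces to the formal considerations above.
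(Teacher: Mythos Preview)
Your proof is correct and follows exactly the same approach as the paper, which simply states that the result follows from the fact that all functors in Proposition \ref{uniform-completion-rational} commute with coproducts and pushouts along rational domain immersions. You have merely unpacked this one-line argument in greater detail, making explicit the general principle about precomposition and the handling of the equivalence claim via quasi-inverses.
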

\begin{proof}
This follows from the fact that all the functors of loc. cit. commute with coproducts
and pushouts along rational domain immersions.
\end{proof}

\begin{remark}
\label{convergent-analytic-algebras}
Let $(R,|\cdot|_R)$ be a uniform Banach ring.
Proposition \ref{uniform-completion-algebra} allows us to relate
dagger algebras to usual analytic algebras, that are usually used on
a non-archimedean Banach field, e.g., by Berkovich \cite{Berkovich1} and
Tate \cite{Tate2}.
The problem with the approach to analytic geometry using convergent power
series and analytic algebras is that convergent
power series on polydiscs are not well behaved in the archimedean setting (e.g., not
Noetherian), and that it may lead to looking at the unit circle $S^1$ with its algebra of
continuous complex valued functions as an analytic space (see \cite{Berkovich1}, Remark 1.5.5).
As explained by Bambozzi in his thesis, the overconvergent setting solves this problem
with $S^1$ (see \cite{Bambozzi}, Example 6.4.38).
It is also well known that $p$-adic de Rham cohomology behaves better in the overconvergent
setting \cite{Grosse-Kloenne}.
\end{remark}

\subsection{Affinoid algebras}
We will now introduce the basic building blocks in our approach to analytic geometries,
that will be finitely generated, in some sense. A first natural idea is to use the 
($0$-truncated) envelopping geometries of the rational pre-geometries described in
Proposition \ref{rational-pre-geometries}, in the sense of Lurie \cite{Lurie-DAG-V}, Definition 3.4.9.
In the strict case, this will give a very restrictive notion of affinoid algebra over $\Zbf$,
that one may think of as a notion of affinoid algebra over the analytic field
with one element $\F_{\{\pm 1\}}$ (its algebraic version if described by Durov in \cite{Durov-2007}).
Since general projective varieties over $\Z$ may
not have a model over this deeper base, we will also introduce another notion of
strict affinoid algebra, that is better suited to our purpose of studying
projective schemes as strict global analytic spaces.
\begin{definition}
\label{affinoid-R-algebra}
Let $t\in \{an,\dagger,\dagger^1\}$ be a non-strict type of analytic spaces.
A $t$-algebra $A$ over $R$ that is (isomorphic to) a coequalizer
$$
\xymatrix{
C\ar@<0.7ex>[r]^f\ar@<-0.4ex>[r]_g & B\ar[r]	& A}
$$
of two morphisms in $\RatAlg^t_R$ (i.e., that is finitely presented)
is called a \emph{$t$-affinoid $R$-algebras}. We denote
$\Aff^t_R$ the category of $t$-affinoid $R$-algebras.
A $t$-affinoid $R$-algebra is called a \emph{strict $t$-affinoid $R$-algebra}
if $B$ above can be chosen to be a strict $t$-algebra. It is called a \emph{very
strict $t$-affinoid $R$-algebra} if the above diagram comes from a diagram of
strict rational $t$-algebras. We will denote $\Aff^{t,s}_R$ the category
of strict affinoid $t$-algebras and $\Aff^{t,vs}_R$ the category of
very strict $t$-affinoid $R$-algebras.
\end{definition}

By definition, for $t\in \{an,\dagger,\dagger^1\}$, there is a natural sequence of fully faithful functors
$$\RatAlg^t_R\to \Aff^t_R\to \Alg^t_R.$$
We also have a natural sequence of fully faithful functors
$$\RatAlg^{t,s}\to \Aff^{t,vs}_R\to \Aff^{t,s}_R\to \Aff^t_R\to \Alg^t_R$$
and a fully faithful functor
$$\Aff^{t,vs}_R\to \Alg^{t,vs}_R.$$

For all $t\in \{an,\dagger,\dagger^1\}$ the forgetful functor on $\Alg^t_R$ restricts to
a natural forgetful functor
$$
\Algrm: \Aff^t_R \to  \Alg_R
$$
that induces forgetful functors
$$\Algrm:\Aff^{t,s}_R\to \Alg_R\;\textrm{ and }\;\Algrm:\Aff^{t,vs}_R\to \Alg_R.$$

\begin{definition}
Let $t\in \{an,\dagger,\dagger^1\}$ be a non-strict type of analytic spaces.
The category of \emph{strict $t$-algebras} is the category
$$\Alg^{t,s}_R:=\ind\Aff^{t,s}_R.$$
\end{definition}

We now show that the difference between strict and very strict algebras vanishes
if we work on non-trivially valued fields. Of course, on Banach rings such as
$\Zbf=(\Z,|\cdot|_\infty)$, they give two distinct notions.
\begin{proposition}
Let $(R,|\cdot|_R)$ be a non-trivially seminormed field and $t\in \{an,\dagger,\dagger^1\}$
be a non-strict type of analytic spaces.
Then the natural functors
$$\Aff^{t,vs}_R\to \Aff^{t,s}_R\textrm{ and }\Alg^{t,vs}_R\to \Alg^{t,s}_R$$
are equivalences. 
\end{proposition}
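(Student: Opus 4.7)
The plan is to leverage the non-triviality of $|R^\times|$: since $R$ is a non-trivially seminormed field, there exists $a\in R$ with $|a|\notin\{0,1\}$, and after possibly replacing $a$ by $a^{-1}$ we have $|a^n|\to 0$ as $n\to\infty$. In particular, for any positive real $c$ one can find $\lambda\in R^\times$ with $|\lambda|\cdot c\leq 1$. This rescaling trick is what collapses the distinction between strict and very strict in the presence of a nontrivial valuation.

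First I would treat the affinoid equivalence $\Aff^{t,vs}_R\to\Aff^{t,s}_R$. Given $A\in\Aff^{t,s}_R$ presented as a coequalizer $C\rightrightarrows B\to A$ with $B\in\RatAlg^{t,s}_R$ strict but $C\in\RatAlg^t_R$ possibly non-strict, the key observation is that $C$ is generated as an $R$-algebra by finitely many elements $T_1,\dots,T_k$ (the variables and denominators of its rational-domain presentation), so the kernel $I\subset\Algrm(B)$ is generated by the finitely many $h_j:=f(T_j)-g(T_j)\in B$. Each $h_j$ has some finite norm at an appropriate Banach stage of $B$ (for $t\in\{\dagger,\dagger^1\}$, in $R\{\nu^{-1}T\}$ for some $\nu>1$). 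Using non-triviality, pick $\lambda_j\in R^\times$ with $|\lambda_j h_j|\leq 1$; since $\lambda_j$ is a unit, one still has $I=(\lambda_1 h_1,\dots,\lambda_k h_k)$. Taking $C':=R(S_1,\dots,S_k)^t$ to be the free strict $t$-rational domain algebra and defining $f',g'\colon C'\to B$ in $\RatAlg^{t,s}_R$ by $S_j\mapsto\lambda_j h_j$ and $S_j\mapsto 0$ (well-defined by the norm bound and the universal property of the free strict algebra) yields a coequalizer presentation of $A$ entirely within $\RatAlg^{t,s}_R$, so $A\in\Aff^{t,vs}_R$. Full faithfulness is immediate because both subcategories embed fully faithfully into $\Alg^t_R$ via the chain of embeddings recalled after Definition \ref{affinoid-R-algebra}.

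For the ind-level equivalence $\Alg^{t,vs}_R\to\Alg^{t,s}_R$, I would use that $\Alg^{t,s}_R=\ind\Aff^{t,s}_R$ by definition, together with the parallel identification of $\Alg^{t,vs}_R$ as ind-objects of $\Aff^{t,vs}_R$; the affinoid-level equivalence then transports through indization by functoriality of $\ind$.

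The main technical obstacle I anticipate lies in the overconvergent cases $t\in\{\dagger,\dagger^1\}$: verifying that the rescaled assignment $S_j\mapsto\lambda_j h_j$ really extends to a morphism of uniform ind-Banach rings $R(S)^t\to B$ in $\RatAlg^{t,s}_R$. One must pick indexing radii $\nu'>1$ for the source and $\nu>1$ for the target so that $\lambda_j h_j\in R\{\nu^{-1}T\}$ satisfies $|\lambda_j h_j|_\nu\leq\nu'$, as demanded by the universal property of overconvergent power series recalled in the paper. The rescaling forces $|\lambda_j h_j|_\nu\leq 1<\nu'$ for any $\nu'>1$, which is precisely what is needed, but one has to carry the choice through the colimit presentations compatibly on both sides.
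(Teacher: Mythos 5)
This is essentially the paper's argument: the proof there reduces to essential surjectivity at the affinoid level (full faithfulness and the ind-level statement being formal, exactly as you say) and invokes the non-triviality of the seminorm to replace the source $C$ of the coequalizer diagram by a strict rational domain algebra; you have simply made the rescaling explicit via the generators $\lambda_j h_j$ of the same ideal and checked the overconvergent universal property. The only difference is cosmetic — the paper cites the existence of arbitrarily large elements of $|R|_R$ where you use arbitrarily small ones to contract the $h_j$ into the unit polydisc, two formulations that coincide in the multiplicatively or power-multiplicatively seminormed setting the paper works in.
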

\begin{proof}
We only have to prove the essential surjectivity for affinoid algebras.
It follows from the fact that $|R|_R$ has arbitrarily large elements,
so that the coequalizer defining a strict affinoid algebra may be
chosen to be between two very strict rational algebras.
\end{proof}

\begin{proposition}
\label{strict-affinoid-algebraic}
Let $t\in \{an,\dagger,\dagger^1\}$ be a non-strict type of analytic spaces.
If $R=(R,|\cdot|_0)$ is an integral ring equipped with its trivial seminorm, then
the natural functors
$$\Aff^{t,vs}_R\to \Aff^{t,s}_R\textrm{ and }\Alg^{t,vs}_R\to \Alg^{t,s}_R$$
are equivalences that identify strict affinoid $t$-algebras to
finitely presented $R$-algebras and strict $R$-algebras to arbitrary $R$-algebras.
\end{proposition}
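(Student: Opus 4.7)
The plan is to deduce the statement from Proposition~\ref{strict-rational-algebraic} by observing that on a trivially seminormed ring the convergent and overconvergent power-series rings of every positive polyradius collapse onto the ordinary polynomial ring, so the ``strictness'' and ``radius'' decorations become invisible.

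First I would verify that for every $t\in\{an,\dagger,\dagger^1\}$ and every positive polyradius $\rho$, the underlying ring of $R(\rho^{-1}T)^t$ is simply $R[T]$. The uniform seminorm on $R[T]$ associated to any polydisc is the trivial seminorm, as computed in the proof of Proposition~\ref{strict-rational-algebraic}, so no new series appear either in the Banach completion or in the overconvergent colimit. Consequently, rational-domain algebras of arbitrary polyradius are nothing but localizations of polynomial $R$-algebras at a single element, and the fully faithful inclusion $\RatAlg^{t,s}_R\hookrightarrow \RatAlg^t_R$ becomes an equivalence of categories. Since the source category and the class of coproducts and rational-embedding pullbacks to be preserved in the definitions of $\Alg^{t,vs}_R$ and $\Alg^t_R$ then agree, the entire diagram of Proposition~\ref{uniform-completion-algebra} collapses, and by Proposition~\ref{strict-rational-algebraic} each of these categories is equivalent to $\Alg_R$.

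Next I would identify the subcategory of affinoid algebras inside $\Alg^{t,vs}_R\simeq \Alg_R$. The equivalence $\RatAlg^{t,s}_R\simeq \RatAlg^t_R$ above shows that the definitions of strict and very strict $t$-affinoid coincide, so the functor $\Aff^{t,vs}_R\to \Aff^{t,s}_R$ is automatically an equivalence. Its essential image is exactly the finitely presented $R$-algebras: a coequalizer in $\Alg_R$ of two morphisms between localized polynomial algebras $R[X_1,\dots,X_n][1/f]\rightrightarrows R[Y_1,\dots,Y_m][1/g]$ is visibly finitely presented, and conversely any finitely presented $R$-algebra $R[X_1,\dots,X_n]/(f_1,\dots,f_m)$ arises as the coequalizer of the evident pair $R[X_1,\dots,X_n,Z_1,\dots,Z_m]\rightrightarrows R[X_1,\dots,X_n]$ sending $Z_i$ to $f_i$ and to $0$ respectively.

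Finally, unwinding the definition $\Alg^{t,s}_R:=\ind\Aff^{t,s}_R$ and combining it with the previous identification, I obtain $\Alg^{t,s}_R\simeq \ind(\text{f.p. }R\text{-algebras})\simeq \Alg_R$, the last equivalence being the standard presentation of an arbitrary commutative $R$-algebra as a filtered colimit of its finitely presented sub-$R$-algebras. The equivalence $\Alg^{t,vs}_R\simeq \Alg_R$ obtained in the first step then yields the claimed equivalence $\Alg^{t,vs}_R\to \Alg^{t,s}_R$. There is no substantive obstacle; the whole argument is bookkeeping around the collapse induced by the trivial seminorm, and the only fact imported from outside Proposition~\ref{strict-rational-algebraic} is the familiar filtered-colimit presentation of commutative algebras by finitely presented ones.
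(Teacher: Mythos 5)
Your first step contains a genuine error that the rest of the argument leans on. Over a trivially seminormed integral ring it is \emph{not} true that $R(\rho^{-1}T)^t$ has underlying ring $R[T]$ for every polyradius $\rho$: the computation in the proof of Proposition \ref{strict-rational-algebraic} is carried out only for the \emph{unit} polydisc. For $\rho<1$ the $\ell^1$ (and likewise the uniform) completion acquires genuinely new elements --- with $|a|_0\in\{0,1\}$ one has $\sum_\alpha |a_\alpha|_0\,\rho^{|\alpha|}<\infty$ for \emph{every} formal power series, so $R\langle\rho^{-1}T\rangle$ is essentially $R[[T]]$. This is exactly how the paper realizes $\Z_p=\Zbf_0\{2T\}/(T-p)$ and $\Q_p$ as (non-strict) rational domain algebras over $\Zbf_0=(\Z,|\cdot|_0)$ in Example \ref{examples-rational-domain-algebras}. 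Hence the inclusion $\RatAlg^{t,s}_R\hookrightarrow\RatAlg^t_R$ is \emph{not} an equivalence, and your conclusion would force $\Aff^{t,s}_R=\Aff^t_R$, which is false ($\Q_p$ is $t$-affinoid over $\Zbf_0$ but not a finitely presented $\Z$-algebra, hence not strict affinoid once the proposition is proved).

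The statement nevertheless holds because of an asymmetry in Definition \ref{affinoid-R-algebra} that your argument does not exploit: a \emph{strict} affinoid algebra only requires the \emph{target} $B$ of the coequalizer pair to be strict; the source $C=R(\rho_1^{-1}Y_1,\dots,\rho_n^{-1}Y_n)^t$ may have arbitrary polyradius. The correct reduction is the one the paper uses: by Proposition \ref{strict-rational-algebraic}, $B$ is a localized polynomial algebra carrying the trivial seminorm, so the pair $(f,g):C\rightrightarrows B$ is determined by finitely many elements $f_i,g_i\in\Algrm(B)$, all of seminorm $\leq 1$; these same elements define a pair of morphisms out of the radius-one (hence very strict) ring $R(Y_1,\dots,Y_n)^t$ with the same coequalizer. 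This replaces $C$ by a strict rational algebra and shows strict $=$ very strict at the affinoid level. Your second and third steps (identifying the coequalizers of pairs of localized polynomial algebras with finitely presented $R$-algebras, and passing to ind-objects to recover all $R$-algebras) are fine as written once this repair is made.
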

\begin{proof}
Every strict affinoid $t$-algebra $A$ may be written as a coequalizer
$$
\xymatrix{
C\ar@<0.7ex>[r]^f\ar@<-0.4ex>[r]_g & B\ar[r]	& A}
$$
with $C=R(\rho_1^{-1}Y_1,\dots,\rho_n^{-1})^t$, and, by Proposition \ref{strict-rational-algebraic},
we may also suppose that $B=R(X_1,\dots,X_n,h^{-1})^t$ (with underlying ring
the localized polynomial algebra).
The pair of morphisms $(f,g)$ thus correspond to a family $\{f_i,g_i\}$ of elements in the
localized polynomial algebra, that may also be seen as a pair of morphisms $(f,g)$ as
above with $C=R(Y_1,\dots,Y_n)^t$. This shows that $A$ is a very strict $t$-algebra.
The rest of the proposition follows from Proposition \ref{strict-rational-algebraic}.
\end{proof}

\begin{proposition}
Let $t\in \{an,\dagger,\dagger^1\}$ be a non-strict type of analytic spaces.
The category (opposite to) $\Aff^t_R$ (resp. $\Aff^{t,vs}_R$) is the $0$-truncated
geometric envelope of the pre-geometry defined by $\RatAlg^t_R$ (resp. $\RatAlg^{t,s}_R$)
in the sense of Lurie, \cite{Lurie-DAG-V}, Definition 3.4.9.
The natural functor
$$
\ind\Aff^t_R\to \Alg^t_R\;
\textrm{ (resp. }\ind\Aff^{t,vs}_R\to \Alg^{t,vs}_R\textrm{)}
$$
is an equivalence of categories and the functor
$$
\Alg^{t,s}_R=\ind\Aff^{t,s}_R\to \Alg^t_R
$$
is a fully faithful embedding.
The category $\Aff^{t,s}_R$ is the (opposite) underlying category of a geometry in the sense of
Lurie, \cite{Lurie-DAG-V}, Definition 1.2.5.
\end{proposition}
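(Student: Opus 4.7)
The proposition comprises four assertions: (i) $(\Aff^t_R)^{op}$ and $(\Aff^{t,vs}_R)^{op}$ realize Lurie's $0$-truncated envelope of the pre-geometries on $(\RatAlg^t_R)^{op}$ and $(\RatAlg^{t,s}_R)^{op}$; (ii) the natural functor from ind-affinoid algebras to $t$-algebras is an equivalence; (iii) strict algebras embed fully faithfully into general $t$-algebras; (iv) $(\Aff^{t,s}_R)^{op}$ underlies a geometry in the sense of \cite{Lurie-DAG-V}, Definition 1.2.5. I would tackle them in this order; the envelope property is the crux, and the other three follow with less effort once it is in hand.

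For assertion (i), I would verify Lurie's universal property from \cite{Lurie-DAG-V}, Proposition 3.4.5 directly. By Definition \ref{affinoid-R-algebra}, each object of $\Aff^t_R$ is a coequalizer of a parallel pair in $\RatAlg^t_R$, hence on the opposite side an equalizer. I would then show that $\Aff^t_R$ is closed under finite colimits in $\Alg^t_R$: binary coproducts of affinoids $A_i = \coker(B_i' \rightrightarrows B_i)$ are presented as $\coker((B_1' \otimes_R B_2) \sqcup (B_1 \otimes_R B_2') \rightrightarrows B_1 \otimes_R B_2)$, and these tensor products stay in $\RatAlg^t_R$ by the coproduct formula $R(\rho^{-1}T)^t \otimes_R R(\nu^{-1}S)^t \cong R(\rho^{-1}T, \nu^{-1}S)^t$ of the previous subsection; coequalizers of parallel maps $f,g: A_1 \rightrightarrows A_2$ can be presented analogously as $\coker(B_1 \sqcup_R B_2' \rightrightarrows B_2)$ after lifting $f,g$ through $B_1 \to A_2$. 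Hence $(\Aff^t_R)^{op}$ is generated by $(\RatAlg^t_R)^{op}$ under finite limits, with admissibles and coverings inherited tautologically from Proposition \ref{rational-pre-geometries}, and the universal finite-limit-preserving extension to any $0$-truncated geometry is forced by these presentations.

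For assertion (ii), I would combine (i) with the standard fact that a functor on a small category with finite colimits preserving the designated limits is canonically a filtered colimit of representables: any $t$-algebra $A:(\RatAlg^t_R)^{op}\to\Sets$ extends uniquely to $\widetilde{A}:(\Aff^t_R)^{op}\to\Sets$ preserving finite limits (by the universal property of (i)), and the category of elements of $\widetilde{A}$ is filtered by the existence of finite colimits in $\Aff^t_R$, so $\widetilde{A}$ is an ind-affinoid algebra. For assertion (iii), full faithfulness of $\Aff^{t,s}_R \to \Aff^t_R$ is automatic since the former is by definition a full subcategory, and this extends to ind-completions by standard properties of ind-objects. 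For assertion (iv), I would verify Lurie's geometry axioms from Definition 1.2.5: existence of finite limits on $(\Aff^{t,s}_R)^{op}$ (assertion (i) restricted to the strict setting, noting that strictness of the $B$ in the presentation is preserved under the above constructions); a Grothendieck topology generated by images of standard strict rational domain coverings; and a class of admissible morphisms given by pushouts of strict rational domain embeddings, stable under composition, pullback, and retracts by Proposition \ref{rational-pre-geometries}.

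The main obstacle is assertion (i), specifically verifying that finite colimits of affinoid algebras remain affinoid and that this computation realizes Lurie's abstract envelope construction. The technical core is the stability argument above, combined with the overconvergent coproduct formula and finite presentability of affinoid algebras. Once these finite-colimit descriptions are controlled inside $\RatAlg^t_R$, the universal property of the envelope follows formally, and assertions (ii)--(iv) are then either immediate consequences or require only the straightforward verifications above.
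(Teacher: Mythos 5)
Your proposal is correct and follows essentially the same route as the paper: the envelope is identified via the definition of affinoid algebras as coequalizers of rational domain algebras together with Lurie's colimit-completion construction, the equivalence with $\Alg^t_R$ follows by passing to ind-objects, and the geometry claim for $\Aff^{t,s}_R$ is checked via stability under pushouts and retracts inherited from Proposition \ref{rational-pre-geometries}. You spell out the closure of $\Aff^t_R$ under finite colimits more explicitly than the paper does (the paper delegates this to the citation of the colimit-completion process), but the substance of the argument is the same.
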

\begin{proof}
The two first statements essentially follows from our definition of affinoid and very
strict affinoid algebras. It remains to show that strict affinoid $t$-algebras form a
geometry, with strict rational domain embeddings as admissible morphisms.
We know that affinoid $t$-algebras form the geometric envelope of the pre-geometry
given by rational $t$-algebras (this follows from the construction of the geometric envelope,
that uses the colimit completion process of \cite{Lurie-higher-topos-theory}, Proposition 5.3.6.2).
It remains to show that strict affinoid $t$-algebras
form a sub-geometry of this geometry. They are clearly stable by
pushouts, retractions and have the two out of three property
(following the proof of Proposition \ref{rational-pre-geometries}). Strict rational domain
algebras over strict affinoid $t$-algebras are also stable by pushouts.
This shows that they indeed form a sub-geometry of the geometry given by affinoid
$t$-algebras, so that $\Aff^{t,s}_R\subset \Aff^{t}_R$ is a fully faithful embedding.
The last statement follows by passing to categories of ind-objects.
\end{proof}

\subsection{Perfectoid dagger algebras}
We discuss shortly the perfectoid analogs (see \cite{Scholze-perfectoid-spaces} for
an introduction to perfectoid spaces) of the constructions of the previous
subsections, that may be useful for non-archimedean applications.
\begin{definition}
A \emph{perfectoid field} is a non-archimedean Banach field $K$ of
residue characteristic $p$ whose seminorm is non-discrete and whose
Frobenius map is surjective on the quotient ring $K^\circ/p$. 
\end{definition}

\begin{definition}
Let $t=\{an,\dagger,\dagger^1\}$ be a non-strict type of analytic algebras.
Let $K$ be a perfectoid field with uniformizer $\omega\in K^\circ$.
Let $\rho\in \R_{>0}^n$ be a multiradius.
The non-strict $t$-perfectoid $K$-algebra of power series of multiradius
$\rho$ is the ind-Banach algebra
$$
K((\rho^{-1}T)^{1/p^\infty})^t:=\left(\colim_{n\to\infty}
K^\circ({\rho}^{-1}T,{{\rho}^{-1/p^n}}S)^t/(S^{p^n}-T)\right)[\omega^{-1}].
$$
\end{definition}

Scholze \cite{Scholze-perfectoid-spaces} defines a tilt operation on convergent
power series that sends $K\{T^{1/p^\infty}\}$ to $K^\flat\{T^{1/p^{\infty}}\}$.
Diekert's master thesis \cite{Diekert-these-master} describes the Tilt operation on
overconvergent power series of radius $1$. This can actually be extended to non-strict
$t$-convergent power series, by using the formula
$$K((\rho^{-1}T)^{1/p^\infty})^t\mapsto K^\flat((\rho^{-1}T)^{1/p^\infty})^t.$$

We may define perfectoid rational domain $t$-algebras and
$t$-perfectoid algebras in a way similar to the one used in the previous
section, by replacing everywhere the rings of $t$-convergent power series
by those of $t$-convergent perfectoid $K$-algebra of power series.
This will give rise to categories $\RatAlg^{t,perf}_K$ and $\Alg^{t,perf}_K$
of $t$-perfectoid rational algebras and $t$-perfectoid algebras over $K$
that may be used to define a perfectoid version of $t$-geometry, that gives
back Scholze's perfectoid spaces \cite{Scholze-perfectoid-spaces}
in the strict setting for $t=an$. As pointed out by Scholze, this category only contains
reduced rings, so that it is not adapted to the infinitesimal definition of differential notions.
The tilt operation extends directly to rational domain algebras giving
a natural tilting functor
$$\RatAlg^{t,perf}_K\to \RatAlg^{t,perf}_{K^\flat}$$
that extends, since it is compatible with coproducts and pushforwards along
rational domain immersions, to a tilting functor
$$\Alg^{t,perf}_K\to \Alg^{t,perf}_{K^\flat}.$$
This will also give an equivalence between the corresponding \'etale and pro-\'etale topoi,
to be define in \ref{topologies}.

Since a $t$-rational domain algebra over a perfectoid algebra is still perfectoid
(see \cite{Scholze-perfectoid-spaces}, Theorem 6.3 for the strictly analytic situation),
we may define $t$-perfectoid rational domain algebras by using rational domain
algebras in $t$-perfectoid algebras of power series. This gives a natural fully faithful
functor
$$\RatAlg^{t,perf}_K\to \Alg^t_K.$$
This functor sends pushouts along rational domain immersions to pushouts,
so that it extends to a functor
$$\Alg^{t,perf}_K\to \Alg^t_K.$$
This allows us to do infinitesimal calculus on perfectoid dagger algebras
by using infinitesimal extensions in the category of all dagger algebras.

\begin{remark}
One may use the above viewpoint to propose a global notion of perfectoid algebras.
Let $\Zbf_0:=(\Z,|\cdot|_0)$ be the non-archimedean Banach ring of integers
and $\Qbf$ its fraction field.
One may easily define as above the global perfectoid $t$-convergent
power series of multiradius $\rho$ by setting
$$
\Qbf\{(\rho^{-1}T)^{1/\N^\infty}\}^t:=
\left(\colim_{n\to\infty}
\Zbf_0({\nu}^{-1}T,{{\nu}^{-1/n}}S)^t/(S^{n}-T)\right)\otimes_{\Zbf_0}\Qbf.
$$
The fields $\Kbf=(\Q(\mu^\infty),|\cdot|_0)$ or $\Kbf=(\bar{\Q},|\cdot|_0)$
may play the role of the perfectoid base field.
One may then easily define the category $\RatAlg^{t,perf}_\Kbf$ and
$\Alg^{t,perf}_\Kbf$ as before. Remark that there is no clear
analog of the tilt operation in this situation, but Witt vector constructions with
rings of integers in the spirit of those done
by Davis and Kedlaya in \cite{Davis-Kedlaya-almost-purity}  may be enough to define interesting period rings and period sheaves.
It is also possible to use $\hat{\Z}$-completed derived de Rham
cohomology of $\bar{\Zbf}/\Zbf$ as proposed by Bhatt in \cite{Bhatt-derived-de-Rham},
Remark 10.22.
\end{remark}

\section{Overconvergent geometry}
In all this section, $R$ will denote an ind-Banach ring.
We will now define dagger analytic spaces over $R$, that are essentially given
by pasting dagger algebras along their natural coverings by rational
domains. This pasting operation can be done using $\Sets$-valued
sheaves on the category of dagger algebras. To ease comparisons
with other kinds of analytic spaces, we will also define convergent and $\ell^1$-dagger
analogs of dagger spaces.

\subsection{Dagger analytic spaces}
We first extend to affinoid $t$-algebras the notion of rational domain,
and also define rational coverings. Let $u\in \{an,\dagger,\dagger^1\}$ be a non-strict
type of analytic spaces and $t$ be one of the types $t=u$ or $t=\{u,s\}$ or $t=\{u,vs\}$.
\begin{definition}
Let $A$ be an affinoid $t$-algebra over $R$. Let $(f_0,\dots,f_n)\in \Ringrm(A)^{n+1}$ be
a finite family of elements of $A$ that generate $\Algrm(A)$ as an ideal, and
$\rho=(\rho_0,\dots,\rho_n)\in \R_{>0}^{n+1}$.
\begin{enumerate}
\item The \emph{rational domain $t$-algebra}
$A(\rho_0,f_0|\rho_1,f_1,\dots,\rho_n,f_n)^t$ is the affinoid $t$-algebra over $A$
given by the quotient
$$
A((\rho_1/\rho_0)^{-1}T_1,\dots,(\rho_n/\rho_0)^{-1}T_n)^t/(f_0T_1-f_1,\dots,f_0T_n-f_n).
$$
\item The \emph{associated rational domain} is the morphism
$$
D^t(\rho_0,f_0|\rho_1,f_1,\dots,\rho_n,f_n):=
\Mc(A(\rho_0,f_0|\rho_1,f_1,\dots,\rho_n,f_n)^t)
\longrightarrow
\Mc(A).
$$
\item The family
$$
\left\{D^t(f_i,\rho_i|f_0,\rho_0\dots,\widehat{f_i,\rho_i},\dots,f_n,\rho_n)\longrightarrow
\Mc(A)\right\}_i
$$
is called the \emph{standard covering} associated to the family $\{(f_i,\rho_i)\}_{i=0,\dots,n}$.
\item A rational domain algebra (resp. rational domain) is called \emph{strict} (resp. very strict) if
$A$ is strict (resp. very strict) and the polyradius $\rho=(\rho_0,\dots,\rho_n)$ has all
components equal to $1$.
\end{enumerate}
\end{definition}

By definition of the category $\Aff^t_R$ of affinoid $t$-algebras, the natural functor
$$i:\RatAlg^t_R\to \Aff^t_R$$
sends rational domain algebras on a rational domain algebra $R$ to rational domain
algebras over the associated affinoid $t$-algebra $R^t$, so that
the notation of the above definition for rational domain algebras remains consistent with
the previous one.

Let $(\Alg^t_R,\tau_{\Rat})$ be the category
of $t$-algebras over $R$ equipped with the topology
given by standard coverings by rational domains (naturally
induced from the topology on $\Aff^t_R$ and the equivalence
$\Alg^t_R\cong \ind\Aff^t_R$, as in \cite{Lurie-DAG-V}, Definition 2.4.3).
\begin{definition}
We now work in the category $\Shv_{\widehat{\Sets}}(\Alg^t_{R},\tau_{\Rat})$ of sheaves
on $t$-algebras over $R$ with values in a category $\widehat{\Sets}$ of
\emph{big} sets.
We denote $\Mb(A)$ the sheaf associated to the representable presheaf
given by $A$.
\begin{enumerate}
\item A morphism $D\to X$ of sheaves is called a \emph{rational domain}
if its pullback along any morphism $\Mb(A)\to Y$ is a rational domain.
\item A finite family of rational domains $\{D_i\to X\}_{i\in I}$ is called a standard rational covering
if its pullback along any morphism $x:\Mb(A)\to X$ is a standard rational covering.
\item A morphism $D\to X$ is called a \emph{quasi-compact domain}
if it is a union (colimit) of finitely many rational domains.
\item A morphism $U\to X$ is called a \emph{pre-domain} if
it is the colimit of an arbitrary family of rational domains.
\item A pre-domain $U\to X$ is called a \emph{domain (or an admissible domain)} if for
all points $x:\Mb(A)\to X$ that
factorize through $U$, there exists a quasi-compact
domain $D\to U$  with a factorization $x:\Mb(A)\to D$.
\item If $U\to X$ is a domain, a family of domains $\{U_i\to U\}$ is called
an \emph{admissible covering} if for all rational domain $D\to U$, the pullback of
the family $\{U_i\to U\}$ to $D$ can be refined by a standard rational covering.
\end{enumerate}
We then define a \emph{$t$-analytic space} to be a sheaf
$X$ in $\Sh(\Alg^t_{R},\tau_{\Rat})$
that can be written as the colimit
$$X=\colim_{\Mb(A)\to X} \Mb(A)$$
along the system of all representable domains $\Mb(A)\to X$.
The category of $t$-analytic spaces is denoted
$\An^t_R$.
\end{definition}

\begin{remark}
As explained by Temkin in \cite{Temkin-intro-Berkovich-spaces},
it is difficult to describe a general open affine subscheme but one can easily
characterize it by a universal property. For example, the Zariski open $\A^2-\{0\}$ of
the affine plane $\A^2$ is not described by a localization of the polynomial ring, but as
the union of the two subspaces given by taking out the axis, with coordinate rings
$\Z[x,y,1/x]$ and $\Z[x,y,1/y]$. The analytic analog of general Zariski open
subsets are given by (admissible) domains.
\end{remark}

We may now define the notion of finitely presented analytic space,
which is necessary to explain the relation of $t$-analytic spaces
with complex analytic or $p$-adic analytic geometries.
\begin{definition}
A morphism $f:X\to Y$ of $t$-analytic spaces is called:
\begin{enumerate}
\item affine and finitely presented if for every affinoid $t$-algebra point $\Mb(A)\to Y$, the
pullback is an affinoid $t$-algebra point $\Mb(B)\to X$ such that $B$ is
an affinoid $t$-algebra over $A$;
\item locally finitely presented if it is locally affine and finitely presented for
the $G$-topology.
\end{enumerate}
An analytic space is locally finitely presented if the morphism $X\to \Mb(R)$ is
locally finitely presented.
\end{definition}

For $u\in \{an,\dagger\}$,
the natural functors $\Aff^{u,vs}_R\to \Aff^{u,s}_R\to \Aff^{u}_R$ are compatible
with the rational domain topologies, so that they induces functors
between categories of sheaves, which gives ``destrictification'' functors
$$\An^{u,vs}_R\to \An^{u,s}_R\to \An^{u}_R.$$

\begin{remark}
To illustrate the difference between strict and very strict analytic spaces, we may look
at them over the base Banach ring $\Zbf=(\Z,|\cdot|_\infty)$. A very strict analytic space
over $\Zbf$ is something that one may think of as an analytic space over the field with
one element $\F_{\{\pm 1\}}$ (in analogy with what Durov does in the algebraic
setting in \cite{Durov-2007}). It is thus a very rigid object, and the category of
very strict analytic spaces over $\Zbf$ looks like being quite poor.
A strict analytic space over $\Zbf$ is something more similar in nature to what one
usually uses to call an Archimedean compactification in Arakelov geometry. We will discuss
this further in Section \ref{Dagger-analytic-Arakelov}.
\end{remark}

\begin{remark}
\label{strict-fully-faithful}
It is likely that the above destrictification functors are fully faithful at least in the non-archimedean
case, following arguments similar to the ones used by Temkin \cite{Temkin-local-properties-II}.
In the global case over $\Zbf=(\Z,|\cdot|_\infty)$, Temkin's graded approach to the description of
the points of the $G$-topology can't be directly adapted because of the lack of archimedean
rings of integers (see however Remark \ref{Durov-ring-of-integers} for a possible replacement),
but one may try (following a suggestion of Temkin) to start
from loc. cit., Proposition 2.5 to show that the fibers of the corresponding
morphism of $G$-topoi are connected.
\end{remark}

\begin{remark}
The destrification functor sometimes plays the role of the analytification functor:
if we work on a trivially seminormed integral ring $(R,|\cdot|_0)$,
we have shown in Proposition \ref{strict-affinoid-algebraic} that strict dagger algebras are
simply usual $R$-algebras, and rational domains are given by localizations.
Then domains correspond to Zariski open subsets, and strict analytic
spaces to usual schemes. The associated non-strict analytic spaces
give their non-archimedean analytification over $(R,|\cdot|_0)$.
One may probably identify the points of the $G$-topology on the non-strict analytic
spectrum of $(R,|\cdot|_0)$ with the valuation spectrum of $R$. This gives a
relation between non-strict analytic geometry in the trivially valued case
and Krull and Zariski's valuative approach to algebraic geometry. This relation
of course extend to Huber's approach to non-archimedean geometry,
since the Huber space of an affinoid ring over a $p$-adic field may
be identified with the space of points of the $G$-topology on the corresponding
strict analytic (i.e., rigid) space.
\end{remark}

\begin{remark}
Let $u\in \{an,\dagger,\dagger^1\}$ be a non-strict type of analytic spaces.
We must warn the reader here about the interpretation of algebraic geometry
as strict analytic geometry over a trivially valued ring:
looking at a scheme $X$ over a ring $R$ as a strict $u$-analytic space $X^{u,s}$ over
$(R,|\cdot|_0)$ with $u\in \{an,\dagger\}$, is not a completely harmless operation:
for example, the base extension of the strict $u$-unit disc
over $\Zbf_0:=(\Z,|\cdot|_0)$, with function algebra $(\Z[X],|\cdot|_0)$ along
$\Zbf_0\to \Qbf_p:=(\Q_p,|\cdot|_p)$, will give the $p$-adic unit
disc. If we want to really get back the (non-strict) analytic affine line $\A^{1,u}_{\Qbf_p}$,
we need to start from the affine line $\A^{1,u}_{\Zbf_0}$, that is a the non-strict analytic
space given by the union
$$\A^1_{\Zbf_0}=\colim_{\rho>0}D^{u,s}(0,\rho)$$
of all discs with arbitrary radius and center $0$. There are thus \emph{two very different
ways} of looking at a scheme over $\Z$: as a non-strict analytic space, which gives
its global analytification over $\Zbf_0$ and as a strict analytic space, which really gives
the corresponding algebraic analytic space (that may be used in a GAGA theorem,
for example).
We will discuss the relation of this problem with Arakelov geometry in Section
\ref{Dagger-analytic-Arakelov}. In particular, we will see that in the case of projective
varieties, the above two kinds of analytifications are identified.
\end{remark}

We may also apply the ``destrictification process'' to the situation over the initial ind-Banach
ring $\Zbf:=(\Z,|\cdot|_\infty)$:
the base extension functor
$$\An^{\dagger,s}_{\Zbf}\to \An^{\dagger,s}_{(\Z,|\cdot|_0)}$$
sends a strict analytic space over $\Zbf$ to the underlying scheme,
and the functor
$$\An_\Zbf^{\dagger,s}\to \An_\Zbf^\dagger$$
sends it to the associated global dagger space.

\begin{proposition}
\label{complex-strict-non-strict}
If $\Cbf=(\C,|\cdot|_\infty)$, the functor
$$\RatAlg^{\dagger,s}_{\Cbf}\to \RatAlg^{\dagger}_{\Cbf}$$
is an equivalence of pre-geometries, meaning that it induces a sequence of equivalences
$$
\An^{\dagger,vs}_\Cbf\overset{\sim}{\longrightarrow} \An^{\dagger,s}_\Cbf
\overset{\sim}{\longrightarrow} \An^{\dagger}_\Cbf.
$$
\end{proposition}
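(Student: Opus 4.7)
The plan is to establish the equivalence of pre-geometries $\RatAlg^{\dagger,s}_\Cbf\to \RatAlg^\dagger_\Cbf$, then pass to envelopping geometries and sheaves, and handle the very strict layer separately.

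The key input is that $|\C^\times|_\infty=\R_{>0}$, so every positive real radius is the modulus of some scalar in $\Cbf$. For essential surjectivity, take an arbitrary object of $\RatAlg^\dagger_\Cbf$ of the form $\Cbf\{\rho^{-1}T\}^\dagger\{\rho_0,f_0|\rho_1,f_1,\dots,\rho_n,f_n\}^\dagger$. I would pick $c_i\in\C^\times$ with $|c_i|_\infty=\rho_i$ and $d_i\in\C^\times$ with $|d_i|_\infty=\rho_i/\rho_0$. The substitution $T_i\mapsto c_iT_i'$ sends a convergent power series of radius $\nu_i$ to one of radius $\nu_i/\rho_i$, giving a cofinal reindexing of the filtered colimits and hence an isomorphism $\Cbf\{T'\}^\dagger\overset{\sim}{\to}\Cbf\{\rho^{-1}T\}^\dagger$. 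After this change of variables, replacing $f_i$ by $f_i':=d_i^{-1}f_i$ transforms the condition $\rho_0|f_i|\leq\rho_i|f_0|$ into the strict condition $|f_i'|\leq|f_0|$, while preserving the unit ideal generation (since the $d_i$ are units). The original algebra is thus isomorphic, via an explicit iso, to the strict rational domain algebra $\Cbf\{T'\}^\dagger\{1,f_0|1,f_1',\dots,1,f_n'\}^\dagger$.

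Full faithfulness is automatic: both $\RatAlg^{\dagger,s}_\Cbf$ and $\RatAlg^\dagger_\Cbf$ are full subcategories of the category of uniform ind-Banach $\Cbf$-algebras. For the pre-geometric structure, the same rescaling argument identifies every rational domain embedding in $\RatAlg^\dagger_\Cbf$ (over an object already made strict) with a strict rational domain embedding, and every standard rational covering with a strict one; finite coproducts, which amount to adjoining new overconvergent variables, are trivially preserved. This proves the equivalence of pre-geometries.

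Passing to $0$-truncated geometric envelopes yields $\Aff^{\dagger,s}_\Cbf\overset{\sim}{\to}\Aff^\dagger_\Cbf$, and then, since the induced Grothendieck topologies generated by rational coverings correspond under this equivalence, one obtains an equivalence of sheaf categories $\Shv(\Alg^{\dagger,s}_\Cbf,\tau_{\Rat})\overset{\sim}{\to}\Shv(\Alg^\dagger_\Cbf,\tau_{\Rat})$ that preserves the class of representable domains, giving $\An^{\dagger,s}_\Cbf\overset{\sim}{\to}\An^\dagger_\Cbf$. For the first equivalence $\An^{\dagger,vs}_\Cbf\overset{\sim}{\to}\An^{\dagger,s}_\Cbf$, $\Cbf$ is a non-trivially valued field, so the proposition stated earlier in this section (that $\Aff^{t,vs}_R\to\Aff^{t,s}_R$ is an equivalence when $R$ is a non-trivially valued field) applies with $t=\dagger$; passing to ind-objects and sheaves for the rational topology gives the desired equivalence of analytic spaces.

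The main potential obstacle is to verify carefully that the variable rescaling is compatible with the formal filtered colimit defining $(-)^\dagger$: concretely, that $\colim_{\mu>1}\Cbf\{\mu^{-1}T'\}\to\colim_{\nu>\rho}\Cbf\{\nu^{-1}T\}$ is an isomorphism of ind-Banach rings (not merely of underlying rings). This is a cofinality argument using the bijection $\mu\leftrightarrow\nu=\rho\mu$ between the respective directed posets of radii. Once this is in hand, everything else reduces to bookkeeping, since $\Cbf$ is uniform (so the uniformization functor is an equivalence by Proposition \ref{uniform-completion-rational}, and the same argument works verbatim in both the $\ell^1$ and $\ell^\infty$ settings).
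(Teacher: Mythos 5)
Your proof is correct, but it takes a genuinely different route from the paper's. The paper also starts from the fact that $|\C^\times|_\infty=\R_{>0}$, but uses it in a \emph{covering} argument rather than a \emph{rescaling} argument: for $\rho<1$ the disc of radius $\rho$ is exhibited as the strict rational domain $\C\{T\}\{S\}/(\rho S-T)$ inside the unit disc, while for $\rho>1$ the disc of radius $\rho$ is not realized as a single strict object but is admissibly covered by the strict unit disc $\{|T|\leq 1\}$ and the annulus $\{1\leq|T|\leq\rho\}$ (the latter identified with a strict rational domain of the unit disc via $\C\{S\}\{T\}/(TS-\rho^{-1})$); the equivalence of the $\An$-categories then comes from the fact that the strict objects generate the non-strict topology. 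Your change of variables $T_i\mapsto c_iT_i'$ with $|c_i|_\infty=\rho_i$, together with the cofinal reindexing $\nu\leftrightarrow\nu/\rho$ of the colimits defining $(-)^\dagger$ and the replacement $f_i\mapsto d_i^{-1}f_i$, instead produces a direct isomorphism between each non-strict rational domain algebra and a strict one. This buys a strictly stronger conclusion — essential surjectivity of $\RatAlg^{\dagger,s}_\Cbf\to\RatAlg^\dagger_\Cbf$ on objects, hence an honest equivalence of the pre-geometries themselves rather than only of the sites they generate — and it makes the passage to $\An^{\dagger,s}_\Cbf\simeq\An^\dagger_\Cbf$ pure bookkeeping. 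The one point you rightly flag, compatibility of the rescaling with the formal filtered colimit defining the overconvergent algebras, is exactly the cofinality check needed, and it goes through since multiplication by $\rho$ is an order isomorphism of the poset of radii. Your treatment of the very strict layer via the earlier proposition on non-trivially valued fields matches what the paper leaves implicit.
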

\begin{proof}
This comes from the fact that the value set of $|\cdot|_\infty$ is $\R_+$.
Indeed, we have for $\rho<1$, the isomorphism
$$\C\{\rho^{-1}T\}=\C\{T\}\{S\}/(\rho S-T)$$
and for $\rho>1$, one may cover admissibly the disc of radius $\rho$
by the strict rational domain $\{|T|\leq 1\}$ (which is the spectrum of
$\C\{T\}$) and the rational domain $\{1\leq |T|\leq \rho\}$ (which identifies
with the strict rational domain $\{\rho^{-1}\leq |S|\leq 1\}$ in the unit disc,
with rational domain algebra $\C\{S\}\{T\}/(TS-\rho^{-1})$). One shows
in a similar way that a rational domain over $\Cbf$ may be admissibly covered
by a strict rational covering.
\end{proof}

The base extension functor corresponding to the bounded map $\Zbf\to \Cbf$ thus
gives a kind of ``associated complex analytic space''
$$\An^{\dagger,s}_\Zbf\to \An^{\dagger,s}_\Cbf.$$
We will see in Proposition \ref{projective-Arakelov} that this operation works well in the case of
projective varieties.
However, in the $p$-adic case, we get two distinct functors on $\An^{\dagger,s}_\Zbf$
with values in strict (i.e., rigid analytic) and non-strict $p$-adic dagger spaces over $\Qbf_p$,
because discs with radius not in $\sqrt[\infty]{p^\Z}\subset \R_+^*$
may not be covered admissibly by strict rational domain algebras.

\begin{definition}
Let $t\in \{an,\dagger,\dagger^1\}$ be a non-strict type of analytic spaces and $X$ be a $t$-analytic
space.
The set of points $|X_{top}|$ of the underlying topological space $X_{top}$ is defined to be
the set of equivalence classes of morphisms
$$\Mb((K,|\cdot|_K)^t)\to X$$
for $R\to (K,|\cdot|_K)$ a morphism of uniform ind-Banach rings from $R$
to a complete multiplicatively seminormed field. A morphism of this form that
is minimal in a given equivalence class $x\in X_{top}$ is denoted $\Kc(x)$ and called
the \emph{residue field} at $x$.
If $f:X\to Y$ is a morphism of $t$-analytic spaces, then
there is a canonical map
$$|f_{top}|:|X_{top}|\to |Y_{top}|.$$
Let $X$ be a $t$-analytic space. A subset $U\subset |X_{top}|$ is called
a \emph{Berkovich open subset} if for every morphism $f:\Mb(B)\to X$, with
induced set map
$$|f|:\Mc(B)\cong |\Mb(B)_{top}|\to X_{top},$$
the inverse image set $|f|^{-1}(U)$ is open in $\Mc(B)$.
The \emph{Berkovich topology $\tau_B$} is defined to be the set of Berkovich
open subset of $|X_{top}|$.
We call $X_{top}:=(|X_{top}|,\tau_B)$ the \emph{underlying topological space}
of $X$.
\end{definition}

The underlying topological space is indeed a topological space (clear) and it is
functorial in morphisms of $t$-analytic spaces.
Indeed, if $f:X\to Y$ is a morphism of $t$-analytic spaces, $U\subset |X_{top}|$ is
open, and $g:\Mb(B)\to X$ is a morphism, then $|g|^{-1}(|f|^{-1}(U))=|f\circ g|^{-1}(U)$
is also open, so that $|f|:|X_{top}|\to |Y_{top}|$ is continuous.

We have thus showed that every $t$-analytic space gives rise to
two natural topologies: the $G$-topology, and the Berkovich topology.
There is natural condition to impose on them to get nicely behaved
underlying topological spaces.

If $U=U_{top}\subset X_{top}$ is an open subset, we will still denote $U\subset X$ the sub-functor
defined by the condition that every point $x:\Mb(A)\to X$ such that $x_{top}:\Mc(A)\to X_{top}$
factorizes through $U_{top}$ is in $U(A)$.

\begin{definition}
A $t$-analytic space $X$ is called a \emph{$t$-Berkovich space} if it is locally
finitely presented and every open subset $U\subset X_{top}$ may be covered by admissible domains.
We will denote $\Ber^t_R$ the category of $t$-Berkovich spaces.
\end{definition}

Remark that in a $t$-Berkovich space, every point $x\in |X_{top}|$ has a neighborhood
given by a finite union $\cup_i\Mc(A_i)$ for $\Mb(A_i)\to X$ some rational domains.

The interest of imposing the above conditions is given by the following:
there is a natural continuous morphism of sites
$$\pi:(X,\tau_G)\to (X_{top},\tau_B).$$

Remark that if $\Kbf=(K,|\cdot|_K)\to (L,|\cdot|_L)=\Lbf$ is a Banach field extension, and
$X$ is a $t$-Berkovich space over $\Kbf$, then $X_\Lbf$ is a $t$-Berkovich
space over $\Lbf$, but we may also take the base extension
$$\tilde{X}_\Lbf:=X\times_{\Mb(\Kbf)}\Mb(\Lbf)$$
of $X$ inside the category of $t$-analytic spaces over $\Kbf$,
which will be a $\Kbf$-dagger analytic space but not a $\Kbf$-dagger Berkovich space
anymore, since it is not locally finitely presented in general (i.e., not locally modeled on
$t$-affinoid algebras).

\begin{definition}
Let $u\in \{an,\dagger,\dagger^1\}$ be the underlying non-strict type of a type
$t\in \{an,\{an,s\},\dagger,\{\dagger,s\},\dagger^1,\{\dagger^1,s\}\}$ of analytic spaces.
The $u$-affine line over $R$ is the (non-strict) $u$-space over $R$ defined
by the colimit of discs in one variable over $R$:
$$\A^{1,u}_R:=\colim_\rho D^u_R(0,\rho).$$
The structural sheaf $\Oc^t$ of $t$-algebras on a
$t$-analytic space
$(X,\tau_G)$ is given on a domain $D\to X$ by
$$\Oc^t_{X_G}(D):=\Hom_{\An^u_R}(D,\A^{1,u}_R).$$
If $X$ is a $t$-Berkovich space, the structural sheaf on $X_{top}$ is
given by
$$\Oc^t_{X_{top}}(U)=\pi_*\Oc^t_{X_G}:=\colim_{D\subset U}(\Oc^t_{X_G}(D)),$$
where the colimit is taken in the category $\Alg^{t}_R$ and
over all domains (or even rational domains) contained in $U$.
\end{definition}

Remark that one needs to pass to the non-strict category to define the affine line,
even if we work in the strict case, because when we work with strict analytic spaces
on a trivially valued integral ring $(R,|\cdot|_0)$, i.e., with schemes, there are no
elements in $R$ of arbitrary big seminorm, so that one would need to give another
definition for the affine line in this case (for example, as the unit disc,
that is the usual algebraic affine line in this strict trivially valued situation).

\begin{example}
We now explain how the notion of weak formal scheme from Meredith
\cite{Meredith} is naturally related to dagger analytic spaces over a $p$-adic ring
of integers.
Let $K$ be a complete non trivially valued non-archimedean field, with ring
of integers $\O_K$ and residue field $k$. Let $\pi\in \O_K$ be a uniformizer.
A \emph{weak formal scheme} is a strict dagger analytic space over
$(\O_K,|\cdot|_K)$. The generic fiber of a weak formal scheme $X$ is
the extension of scalars of $X$ along the bounded morphism
$(\O_K,|\cdot|_K)\to (K,|\cdot|_K)$. Its special fiber is the scheme given by
the extension of scalars of $X$ along the bounded morphism
$(\O_K,|\cdot|_K)\to (k,|\cdot|_0)$. The case of formal schemes can be
treated similarly using strict analytic spaces over $(\O_K,|\cdot|_K)$.
It is easy to find weak formal schemes that gives dagger models over $\O_K$ of
finitely presented affine schemes over $k$. Indeed, a natural dagger model
for the affine space $\A^n_{k}$ is of course given by the unit polydisc
$\Db^{n,\dagger}(0,1)$ over $\O_K$, and given finitely many equations and inequations
$f_i=0$ and $g_j\neq 0$ in $k[X_1,\dots,X_n]$, one may extend them
to $\Db^{n,\dagger}(0,1)$ by taking pre-images in $\O_K[X_1,\dots,X_n]$
of the corresponding polynomials, and using the strict domain $D$ defined
by $|f_i|\leq |\pi|$ and $|g_j|\geq 1$, for example. A similar argument applies
to the projective space: the strict projective dagger space over $\O_K$ may
be obtained by pasting various polydiscs (instead of various affine spaces).
There is also a general theorem due to Arabia \cite{Arabia}: every smooth scheme $\bar{X}$
over $k$ can be extended to a smooth weak formal scheme $X$ over $\O_K$.
In the general case, one can always extend a quasi-projective scheme over $k$
to a weak formal scheme over $\O_K$, that is not smooth anymore in general.
One may however see it as a derived dagger space over $\O_K$ in the sense
of Section \ref{derived-dagger-analytic-geometry}, and the (Hodge-completed)
de Rham cohomology of its generic fiber, that is a derived dagger space over
$K$, will give us a nice cohomology theory. It is also quite sure that the (Hodge completed)
derived de Rham cohomology of the dagger model $\bar{X}$ over $\O_K$ is an
interesting invariant, because it also contains an important $p$-torsion information,
similar to the one used in $p$-adic cohomology theories.
\end{example}

\begin{remark}
\label{global-analytic-dagger}
Let $X$ be an analytic space over a Banach ring $\Rbf=(R,|\cdot|_R)$ in the sense
of Berkovich (see \cite{Berkovich1} and \cite{Poineau2}). To every dagger rational
domain algebra $A$ over $R$ we associate the corresponding germ of global analytic subspace
$D_A$ in the corresponding analytic affine space $\A^n_{(R,|\cdot|_R)}$. This
correspondence is actually an equivalence, since we can find back that algebra
using global sections on the germ.
The presheaf $X^\dagger$ on rational domain dagger algebras
defined by setting $X^\dagger(A)$ to be given by the set of morphisms of germs
(pro-global analytic spaces) $D_A\to X$ is a sheaf on the category of rational
domain algebras for the standard rational coverings, and this extends naturally to
a sheaf on the category $\Alg^\dagger_\Rbf$ by writing a dagger
algebra as a colimit of rational dagger algebras. This gives a natural functor
$$(-)^\dagger:\Ber^{glob}_\Rbf\to \An^\dagger_\Rbf$$
from global analytic spaces over $\Rbf$ in the sense of Berkovich
\cite{Berkovich1} to dagger analytic spaces over $\Rbf$.
This will allow us to define the de Rham cohomology of a global analytic space,
and to relate dagger analytic motives over $\C$ to Ayoub's analytic motives.
\end{remark}

\begin{remark}
\label{p-adic-analytic-dagger}
The functor that sends a dagger algebra to the associated analytic algebra,
being compatible with rational domains and standard rational coverings,
induces a natural functor between the corresponding category of sheaves,
which in turn induces a natural functor
$$\An^\dagger_R\to \An_R^{an}$$
from dagger analytic spaces to the category of
convergent analytic spaces. This reduces naturally to a functor
$$\Ber^\dagger_R\to \Ber_R^{an}.$$
If $(R,|\cdot|_R)=(K,|\cdot|_K)$ is a non-archimedean valued field,
this functor is close to being an equivalence, as already shown in
a similar (strict) situation by Gro{\ss}e-Kl\"onne in \cite{Grosse-Kloenne}.
The category $\Ber_K^{an}$ is very close to the category of Berkovich's non-archimedean
analytic spaces from \cite{Berkovich-etale-cohomology}.
If we denote $\Ber_K$ Berkovich's original category, there is a natural
functor
$$\Ber_K\to \Ber^{an}_K$$
that sends $X$ to the sheaf $\Hom_{\Ber_K}(-,X)$. We defined our notion
of dagger Berkovich spaces so that this functor is likely to be an equivalence
of categories, with the aim of making the comparison with Berkovich's theory
easier. Remark that, as shown by Bambozzi in his thesis \cite{Bambozzi}, the theory
of dagger spaces seems better adapted to the archimedean situation.
\end{remark}

\subsection{Tate's acyclicity theorem}
We now follow closely the approach of Tate \cite{Tate2} (see also \cite{BGR}, Chap. 7),
by adapting its archimedean version given by Bambozzi in \cite{Bambozzi}, Chapter 4.
The argument are almost the same, but we write them for the reader's
convenience.
In this section, we denote $(R,|\cdot|_R)$ a given base Banach ring,
and $t=\{an,\{an,s\},\dagger,\{\dagger,s\}\}$ be a uniform type of analytic spaces.
Let $X=\Mb(A)$ be an affinoid $t$-analytic space over $R$.
Recall that we have defined the presheaf $\Oc_X$ on the rational domain
topology by setting
$$\Oc_X(U):=\Hom_{\An^u_R}(U,\A^{1,u}_R),$$
where $u\in \{an,\dagger,\dagger^1\}$ is the non-strict type analytic space associated
to the type $t$.
One may compute this explicitly on rational domain algebras, and this gives
simply the functor
$$B\mapsto \Alg(B)$$
that sends a $t$-rational domain algebra $B=A(f_0,\rho_0|f_1,\rho_1,\dots,f_n,\rho_n)^t$
to the underlying algebra.
We will now check that this presheaf is actually a sheaf.

\begin{lemma}
Let $X=\Mb(A)$ be an affinoid $t$-analytic space over $R$
and $X=\cup_i U_i$ be an affinoid $t$-covering. Then
$$\Oc_X(X)\to \prod_i\Oc_X(U_i)$$
is injective.
\end{lemma}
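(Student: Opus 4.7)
The plan is to follow the classical Tate-style argument, reducing to a standard rational covering, passing to the Berkovich spectrum, and then invoking uniformity.

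First I would reduce to the case where $\{U_i\}$ is a standard rational covering associated to a finite family $(f_0,\dots,f_n)$ generating $\Algrm(A)$ as an ideal, with parameters $(\rho_0,\dots,\rho_n)$. By the definition of an affinoid (admissible) covering, the given covering is refined, after further localization, by a standard rational covering $\{V_k=\Mb(B_k)\}$. Since each restriction $A\to A_i$ factors through the corresponding $A\to B_k$, injectivity of $\Oc_X(X)\to\prod_k\Oc_X(V_k)$ implies injectivity of $\Oc_X(X)\to\prod_i\Oc_X(U_i)$, so we may replace $\{U_i\}$ by a standard rational covering.

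Next I would verify the set-theoretic covering $\bigcup_i\Mc(A_i)=\Mc(A)$. Given $x\in\Mc(A)$, the relation $1=\sum g_k f_k$ forces $|f_k(x)|\neq 0$ for some $k$; choosing $i_0$ to maximize $|f_k(x)|/\rho_k$ gives $\rho_{i_0}|f_j(x)|\le\rho_j|f_{i_0}(x)|$ for all $j$, i.e.\ $x\in\Mc(A_{i_0})$. Now suppose $f\in\Algrm(A)$ maps to $0$ in every $\Algrm(A_i)$. Since $t$ is a uniform type, both $A$ and the $A_i$ are uniform, hence their seminorms coincide with the spectral seminorm, $|a|_A=\sup_{x\in\Mc(A)}|a(x)|$ and similarly for $A_i$. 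The hypothesis gives $|f(x)|=0$ for every $x\in\Mc(A_i)$ and every $i$, hence $|f(x)|=0$ for every $x\in\Mc(A)$ by the previous paragraph, so $|f|_A=0$.

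Finally I would conclude $f=0$ from $|f|_A=0$. The ring $A$ is obtained as a coequalizer of a pair of morphisms between $t$-rational domain algebras followed by the uniform completion functor $U$; since $U$ identifies elements of spectral seminorm zero with $0$ (uniform Banach rings are reduced and their seminorm is power-multiplicative and separating on the uniform quotient), the element $f\in\Algrm(A)$ with $|f|_A=0$ is necessarily zero. The main obstacle I anticipate is the reduction step: one must check carefully that the given notion of affinoid $t$-covering is refined by a standard rational one (which is the content of how $\tau_{\Rat}$ is defined), and that the formation of spectral seminorms behaves well under the uniform completion used to build $A$ from its rational-domain presentation, so that $|f|_A=0\Rightarrow f=0$ in $\Algrm(A)$ rather than only in some uniform quotient.
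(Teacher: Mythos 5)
Your reduction to a standard rational covering and the verification that $\bigcup_i\Mc(A_i)=\Mc(A)$ are fine, and your route is genuinely different from the paper's: the paper disposes of injectivity in one line by observing that $\Oc_X$ is defined via maps to the affine line and that such functions are determined by their \emph{germs} at the points of the Berkovich space (each point lies in some $U_i$, so a section vanishing on every $U_i$ has zero germ everywhere). You instead argue via \emph{values} at Berkovich points and the spectral seminorm. That distinction is where your proof breaks.

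The gap is the last step, $|f|_A=0\Rightarrow f=0$ in $\Algrm(A)$. An affinoid $t$-algebra is by definition a coequalizer of two morphisms of rational domain $t$-algebras taken in the functor category $\Alg^t_R$, \emph{not} a coequalizer in uniform ind-Banach rings followed by uniformization — the latter would produce $\ib(A)$, and the canonical map $A\to\ib(A)^t$ is an isomorphism only for the so-called uniform $t$-algebras. The entire point of passing from uniform ind-Banach rings to $t$-algebras is to retain nilpotent elements (e.g.\ $R\{T\}^\dagger/(T^2)$), and a nonzero nilpotent $f$ satisfies $|f(x)|=0$ for every $x\in\Mc(A)$. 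So your argument, if it worked, would prove the false statement that every function vanishing at all points of $\Mc(A)$ is zero; in fact it only shows that the kernel of $\Oc_X(X)\to\prod_i\Oc_X(U_i)$ consists of elements of spectral seminorm zero. To close the gap you must argue as the paper does, at the level of restrictions/germs rather than point evaluations: e.g.\ show that a section of $\Hom_{\An^u_R}(-,\A^{1,u}_R)$ whose restriction to each member of a covering (in the topology $\tau_{\Rat}$ on $\Alg^t_R$) vanishes is itself zero, which is a statement about the sheaf-theoretic definition of $\Oc_X$ and does not reduce to the spectral seminorm being a norm.
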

\begin{proof}
This comes from the definition of $\Oc_X$ as morphisms with values in the (non-strict)
space given by affine line, and that analytic functions (overconvergent or not) are
determined by their germs at every point of the Berkovich space.
\end{proof}

\begin{lemma}
\label{maximum-sup}
Let $A$ be a $t$-affinoid $R$-algebra. For any $f\in \Alg(A)$, there exists
a point $x_0\in \Mc(A)$ such that
$$|f(x_0)|=\sup_{x\in \Mc(A)}|f(x)|.$$
\end{lemma}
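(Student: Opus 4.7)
The plan is to reduce this to the classical extreme value theorem: a continuous real-valued function on a non-empty compact space attains its supremum. So the two ingredients I need are compactness of $\Mc(A)$ and continuity of the evaluation map $x\mapsto |f(x)|$.

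First I would reduce $\Mc(A)$ to a spectrum of a genuine ind-Banach ring. By the proposition preceding the statement, $\Mc(A)\cong \Mc(\ib(A))$ as topological spaces, where $\ib(A)$ is a uniform ind-Banach ring obtained as a filtered colimit of uniform rational domain algebras $A=\colim_i A_i$ (the $A_i$ being the Banach representables giving the canonical colimit presentation of the affinoid functor $A$, or, more economically, the pushout of the coequalizer presentation $C\rightrightarrows B\to A$ inside uniform ind-Banach rings). By the definition of the topology on spectra of ind-Banach rings, $\Mc(\ib(A))=\lim_i \Mc(A_i)$ with the projective-limit topology. Since each $A_i$ is a Banach ring, $\Mc(A_i)$ is compact Hausdorff (this is Berkovich's classical result on Banach spectra, which applies uniformly to the rational-domain Banach rings entering our presentation; the spectrum is a closed subset of $\prod_{a\in A_i}[0,|a|_{A_i}]$). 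A projective limit of compact Hausdorff spaces is compact Hausdorff, so $\Mc(A)$ is compact.

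Next, I would verify that $x\mapsto |f(x)|$ is continuous on $\Mc(A)$. The element $f\in\Algrm(A)$ comes from some $A_{i_0}$ in the filtered diagram (since $\Algrm(A)=\colim_i \Algrm(A_i)$ by construction of $\Algrm$ on $t$-algebras as the colimit of values on power-series algebras of growing radii), and on $\Mc(A_{i_0})$ the map $x\mapsto |f(x)|$ is continuous by definition of the Banach-spectrum topology. Pulling back along the canonical projection $\Mc(A)=\lim \Mc(A_i)\to \Mc(A_{i_0})$, which is continuous, gives continuity of $|f(\cdot)|$ on $\Mc(A)$. Alternatively, one can invoke directly the definition of the topology on $\Mc(A)$ given in the text, which is designed to make exactly these evaluation maps continuous.

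Combining these two steps, $|f(\cdot)|:\Mc(A)\to \R_+$ is a continuous function on a compact space, hence attains its supremum at some $x_0\in \Mc(A)$ (assuming $\Mc(A)$ is non-empty; if it is empty the statement is vacuous with the convention $\sup\emptyset=0$, otherwise one picks any maximizer). The main obstacle is really only the first step: making precise that the compactness of Banach spectra passes cleanly to the ind-Banach (hence $t$-affinoid) setting via the projective-limit description, and checking that this description yields the same topological space as the one defined by the evaluation maps on $\Algrm(A)$. Once this compatibility is in hand, the rest is formal.
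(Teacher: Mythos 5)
Your proposal is correct and follows essentially the same route as the paper: both reduce the dagger/affinoid case to the spectrum of the associated (ind-)Banach ring, and then apply the extreme value theorem to the continuous map $x\mapsto|f(x)|$ on the compact space $\Mc(A)$. You simply spell out the compactness of $\Mc(A)$ (as a projective limit of compact Banach spectra) in more detail than the paper, which takes it for granted.
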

\begin{proof}
The dagger analytic situation restricts to the strict situation by using
the fact that the Berkovich spectrum of a dagger affinoid algebra may be
identified with the Berkovich spectrum of the associated analytic affinoid algebra.
One may then use the fact that we work with uniform Banach rings, so
that the given norm is equal to the spectral norm
$$\|f\|_\infty:=\sup_{x\in \Mc(A)}|f(x)|.$$
Since $x\mapsto |f(x)|$ is continuous for the Berkovich topology (by definition)
on the compact topological space $\Mc(A)$, there exists $x_0\in \Mc(A)$ such
that $\|f\|_\infty=|f(x_0)|$.
\end{proof}

\begin{lemma}
\label{minimum-maximum}
Let $X=\Mc(A)$ with $A$ $t$-affinoid $R$-algebra and $f_1,\dots,f_n\in \Alg(A)$,
$\rho_1,\dots,\rho_n\in \R_{>0}$, then the function
$$\alpha(x):=\max_{i=1,\dots,n}\rho_i^{-1}|f_i(x)|$$
assume it's minimum in $X$.
\end{lemma}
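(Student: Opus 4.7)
The plan is to mirror the proof of Lemma \ref{maximum-sup} almost verbatim, replacing ``$\sup$ of a single continuous function'' by ``$\inf$ of a continuous maximum of finitely many continuous functions''. The essential ingredients are (i) continuity of each evaluation $x \mapsto |f_i(x)|$ on the Berkovich spectrum, (ii) compactness of $\Mc(A)$ for a $t$-affinoid $R$-algebra $A$, and (iii) the classical fact that a continuous $\R_+$-valued function on a compact topological space attains its infimum.

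First, I would reduce to the uniform analytic situation exactly as in the proof of Lemma \ref{maximum-sup}. Recall that the Berkovich spectrum of a dagger (or $\ell^1$-dagger, or overconvergent, strict or not) $t$-affinoid algebra is identified with that of its associated uniform analytic affinoid algebra, obtained by passing through $\ib(A)$ and uniformization. Under this identification the functions $|f_i(\cdot)|$ are unchanged, so it suffices to prove the statement for an analytic affinoid $R$-algebra, i.e.\ for a quotient of $R\{\nu^{-1}T\}$ by a finitely generated ideal.

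Next, I would invoke continuity and compactness. By the very definition of the Berkovich topology on $\Mc(A)$ each map $x\mapsto |f_i(x)|$ is continuous into $\R_+$, and therefore so is
\[
\alpha(x) \;=\; \max_{i=1,\dots,n}\rho_i^{-1}\,|f_i(x)|,
\]
being the pointwise maximum of finitely many continuous $\R_+$-valued functions. On the other hand, $\Mc(A)$ is compact: for $A=R\{\nu^{-1}T\}$ this is a standard property of the Berkovich spectrum of a Banach ring (it is cut out inside the compact space of bounded multiplicative seminorms on $R[T]$ by the closed conditions $|T_i|\leq \nu_i$), and quotients by finitely generated ideals cut out closed, hence compact, subspaces.

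Finally, a continuous function from a non-empty compact topological space to $\R_+$ attains its infimum, so there exists $x_0\in X$ with $\alpha(x_0)=\inf_{x\in X}\alpha(x)$, which is what we wanted. The only step that is not purely formal is checking compactness of $\Mc(A)$ in the present ind-Banach, $t$-affinoid generality; but since the Berkovich spectrum was defined as the projective limit $\Mc(A)=\lim \Mc(A_i)$ of the compact spectra of the Banach pieces, and since for affinoid algebras the description via $\ib(A)$ reduces us to a single Banach quotient of a power series algebra, this step is handled in the same way as in Lemma \ref{maximum-sup} and presents no real obstacle.
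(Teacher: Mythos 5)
Your proof is correct, but it takes a genuinely different route from the paper's. The paper follows the classical Tate--BGR argument: after disposing of the case where the $f_i$ have a common zero (there the minimum $0$ is attained at such a zero), the $f_i$ generate the unit ideal, so one forms the standard rational covering by the pieces $X_i$ on which $\alpha(x)=\rho_i^{-1}|f_i(x)|$ and $f_i$ is invertible; applying Lemma \ref{maximum-sup} to $f_i^{-1}$ on each affinoid piece $X_i$ produces the minimum of $\rho_i^{-1}|f_i|$ there, and the least of these finitely many minima is the minimum of $\alpha$ on $X$. You instead observe that $\alpha$ is a finite maximum of continuous functions on the compact space $\Mc(A)$ and therefore attains its infimum directly. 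Since the paper's own proof of Lemma \ref{maximum-sup} already rests on exactly this compactness-plus-continuity argument (rather than on a purely algebraic maximum principle, as in the rigid-analytic references where the relevant spectrum is not compact), your argument is the more economical one in this setting: it needs neither the case distinction on common zeros, nor the rational covering, nor the invertibility of $f_i$ on the pieces. What the covering argument buys is fidelity to the classical sources being adapted; what your argument requires is only the one non-formal input you correctly identify, namely compactness of $\Mc(A)$ in the ind-Banach, $t$-affinoid generality, which the paper itself already takes for granted in Lemma \ref{maximum-sup}.
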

\begin{proof}
If the $f_i$'s have a common zero, there is nothing to show. Otherwise, they generate the
unit ideal in $A$, and one may consider the rational covering of $X$ given by
$$
X_i=
D(f_i,\rho_i|f_1,\rho_1,\dots,\widehat{f_i,\rho_i},\dots,f_n,\rho_n)=\{x\in X,\alpha(x)=\rho_i^{-1}|f_i(x)|\}.
$$
By Lemma \ref{maximum-sup}, $\rho_i^{-1}|f_i(x)|$ assume its minimum on $X_i$ (because
$\rho_i|f_i^{-1}(x)|$ has a maximum), and so $\alpha$ has assumes its minimum in $X$,
which is the least of the minimum of the $\rho_i^{-1}|f_i(x)|$ over $X_i$.
\end{proof}

\begin{definition}
Let $A$ be a $t$-affinoid algebra, $f_1,\dots,f_n\in \Alg(A)$, $\rho_1,\dots,\rho_n\in \R_{>0}$
and $X=\Mb(A)$.
Then each
$$\Uc_i=\{D^t(1,1|f_i,\rho_i),D^t(1,1|f_i^{-1},\rho_i^{-1})\}$$
is a $t$-rational covering of $X$. We denote by $\Uc_1\times\cdots\times \Uc_n$ the
covering consisting of all intersections of the form
$U_1\cap \dots\cap U_n$ where $U_i\in \Uc_i$ and call
this the \emph{$t$-Laurent covering} of $X$ generated by $\{(f_i,\rho_i)\}_{i=1,\dots,n}$.
\end{definition}

More explicitly, the elements of the Laurent covering generated by $\{(f_i,\rho_i)\}$ are rational
domains of the form
$$D(1,1|f_1^{\mu_1},\rho_1^{\mu_1},\dots,f_n^{\mu_n},\rho_n^{\mu_n})$$
with $\mu_i=\pm 1$.

\begin{lemma}
\label{rational-laurent-refinement}
Suppose that $t=\{an,\dagger\}$ is a non-strict uniform type of analytic spaces.
Let $\Uc$ be a $t$-rational covering of $X=\Mb(A)$. There exists a $t$-Laurent covering
$\Vc$ of $X$ such that for any $V\in \Vc$, the covering $\Uc_{|V}$ is a $t$-rational
covering of $V$, which is generated by units in $\Oc_X(V)$.
\end{lemma}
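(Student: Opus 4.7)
The plan is to adapt Tate's classical refinement argument, using the flexibility of arbitrary positive real radii afforded by the non-strictness hypothesis on $t$.

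First I would fix a concrete presentation of the given rational covering. Write $\Uc = \{X_i\}_{i=0,\ldots,n}$ as the standard rational covering generated by a family $(f_0,\rho_0),\ldots,(f_n,\rho_n)$ with $f_0,\ldots,f_n \in \Algrm(A)$ generating the unit ideal, so $X_i = \{x\in X:\rho_j|f_j(x)|\le \rho_i|f_i(x)|\text{ for all }j\}$. Since the $f_i$ have no common zero, the continuous function $\alpha(x):=\max_i\rho_i^{-1}|f_i(x)|$ is strictly positive on $\Mc(A)$, and by Lemma \ref{minimum-maximum} it attains a positive minimum $\epsilon>0$. Here is where non-strictness is used: I pick an arbitrary real $c$ with $0<c<\epsilon$ (no need for $c$ to lie in $|R|_R^*$), and take $\Vc$ to be the Laurent covering of $X$ generated by $\{(f_i,c\rho_i)\}_{i=0,\ldots,n}$. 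Its pieces are indexed by subsets $S\subseteq\{0,\ldots,n\}$, with
$$V_S=\{x\in X:|f_i(x)|\le c\rho_i\text{ for }i\in S,\;|f_i(x)|\ge c\rho_i\text{ for }i\notin S\}.$$

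Next I would verify the claimed refinement piece by piece. The total piece $V_{\{0,\ldots,n\}}$ is empty, because on it $\alpha(x)\le c<\epsilon$, contradicting $\alpha\ge\epsilon$. Fix any other $S\subsetneq\{0,\ldots,n\}$ and examine $V_S$: for $i\notin S$ we have $|f_i(x)|\ge c\rho_i>0$ on $V_S$, so $f_i\in\Oc_X(V_S)^\times$ is a unit; for $i\in S$ we have $\rho_i^{-1}|f_i(x)|\le c<\epsilon\le\alpha(x)$, so such an $x$ never lies in $X_i$, whence $X_i\cap V_S=\emptyset$. Consequently $\Uc_{|V_S}=\{X_i\cap V_S\}_{i\notin S}$.

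Finally I would identify $\Uc_{|V_S}$ with a rational covering generated by units. The claim is that $X_i\cap V_S$ coincides, for $i\notin S$, with the standard rational domain in $V_S$ defined by the family $(f_j,\rho_j)_{j\notin S}$, i.e.\ $\tilde X_i=\{x\in V_S:\rho_j|f_j(x)|\le\rho_i|f_i(x)|\text{ for all }j\notin S\}$. The inclusion $X_i\cap V_S\subseteq\tilde X_i$ is immediate, while for the reverse one notes that for $x\in\tilde X_i$ and $j\in S$,
$$\rho_j^{-1}|f_j(x)|\le c\le \rho_i^{-1}|f_i(x)|,$$
the latter inequality holding since $\rho_i^{-1}|f_i(x)|\ge\max_{k\notin S}\rho_k^{-1}|f_k(x)|\ge c$ (some $k\notin S$ achieves $\alpha(x)\ge c$). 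Thus the full system of inequalities cutting out $X_i$ is satisfied, and $\Uc_{|V_S}$ is the rational covering of $V_S$ generated by $\{(f_i,\rho_i)\}_{i\notin S}$, with all generators units in $\Oc_X(V_S)$.

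The routine part is the bookkeeping with the radii; the one point that genuinely requires care, and which I expect to be the main technical obstacle, is the identity $X_i\cap V_S=\tilde X_i$ inside $V_S$: one must confirm that dropping the inequalities indexed by $j\in S$ loses no information, which rests crucially on the strict gap $c<\epsilon$ produced in the first step and on the non-strict type assumption allowing a real $c$ to be chosen in this gap.
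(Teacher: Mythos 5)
Your proof is correct and is essentially the paper's own argument: choose a threshold strictly below the minimum of $\alpha(x)=\max_i\rho_i^{-1}|f_i(x)|$ (guaranteed by Lemma \ref{minimum-maximum} and available as a Laurent radius only because $t$ is non-strict), form the Laurent covering at the rescaled radii, observe that the pieces of $\Uc$ indexed by $S$ die on $V_S$, and conclude that the survivors are units from the lower bound $|f_i|\ge c\rho_i$; your explicit two-sided check that $X_i\cap V_S=\tilde X_i$ just spells out what the paper compresses into the one line ``$\max_{i=1,\dots,n}\rho_i^{-1}|f_i(x)|=\max_{i=s+1,\dots,n}\rho_i^{-1}|f_i(x)|$ on $V$''. (Only nit: your displayed formula for $X_i$ has the $\rho$'s on the wrong sides — it should read $\rho_i|f_j(x)|\le\rho_j|f_i(x)|$, i.e.\ the locus where $\rho_i^{-1}|f_i|$ realizes $\alpha$ — but your subsequent reasoning uses the correct characterization throughout.)
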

\begin{proof}
Let $f_1,\dots,f_n\in \Alg(A)$, $\rho_1,\dots,\rho_n\in \R_{>0}$ be the datum of definition
of the $t$-rational covering $\Uc$. We may chose a constant $c\in \R_{>0}$ such that
$$c^{-1}<\inf_{x\in X}(\max_{i=1,\dots,n}\rho_i^{-1}|f_i(x)|),$$
which is well defined by Lemma \ref{minimum-maximum}.
Let $\Vc$ be the $t$-Laurent covering generated by $\{(f_i,c^{-1}\rho_i)\}_{i=1,\dots,n}$.
Consider the set
$$V=D(1,1|f_1^{\mu_1},(c^{-1}\rho_1)^{\mu_1},\dots,f_n^{\mu_n},(c^{-1}\rho_n)^{\mu_n})\in \Vc,$$
where $\mu_i=\pm 1$. We can assume that there exists an $s\in \{0,\dots,n\}$ such
that $\alpha_1=\cdots=\alpha_s=1$, and $\alpha_{s+1}=\cdots=\alpha_n=-1$.
Then
$$D(f_i,\rho_i|f_1,\rho_1,\dots\widehat{f_i,\rho_i},\dots,f_n,\rho_n)\cap V=\emptyset$$
for $i=1,\dots,s$, since
$$\max_{i=1,\dots,s}\rho_i^{-1}|f_i(x)|\leq c^{-1}<\max_{i=1,\dots,n}\rho_i^{-1}|f_i(x)|$$
for all $x\in V$. In particular, for all $x\in V$, we have
$$\max_{i=1,\dots,n}\rho_i^{-1}|f_i(x)|=\max_{i=s+1,\dots,n}\rho_i^{-1}|f_i(x)|,$$
hence $\Uc_{|V}$ is the rational covering of $V$ gee rated by $\{({f_i}_{|V},\rho_i)\}$ for
$s+1\leq i\leq n$, which are units in $V$ if $t=an$ since their spectral norm is positive.
They are also units in $V$ if $t=\dagger$.
\end{proof}

\begin{lemma}
\label{rational-laurent-refinement-units}
Let $\Uc$ be a $t$-rational covering of $X$ which is generated by units
$f_1,\dots,f_n\in \Oc_X(X)$, then there exists a $t$-Laurent covering $\Vc$ of
$X$ which is a refinement of $\Uc$.
\end{lemma}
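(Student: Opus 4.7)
The plan is to imitate the classical Tate-style argument (see BGR, Chapter 7) which refines a unit-generated rational covering by a Laurent covering built from the ratios of the generators. Write the rational covering $\Uc$ as the standard covering associated to a family $\{(f_i,\rho_i)\}_{i=1,\dots,n}$ with $f_i\in\Oc_X(X)^\times$, so that its members are
$$
X_i=D^t(f_i,\rho_i\,|\,f_1,\rho_1,\dots,\widehat{f_i,\rho_i},\dots,f_n,\rho_n)
=\{x\in X\,:\,\rho_j^{-1}|f_j(x)|\leq\rho_i^{-1}|f_i(x)|\ \text{for all }j\}.
$$
Since each $f_i$ is a unit in $\Oc_X(X)$, for every pair $i<j$ the ratio $g_{ij}:=f_i/f_j$ is also a unit in $\Oc_X(X)$.

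First, I would form the $t$-Laurent covering $\Vc$ of $X$ generated by the family $\{(g_{ij},\rho_i/\rho_j)\}_{1\leq i<j\leq n}$ in the sense of the previous definition. By construction each $V\in\Vc$ is a finite intersection of rational domains of one of the two forms
$$
\{x\in X\,:\,|g_{ij}(x)|\leq\rho_i/\rho_j\}=\{\rho_i^{-1}|f_i(x)|\leq\rho_j^{-1}|f_j(x)|\}
$$
or
$$
\{x\in X\,:\,|g_{ij}(x)|\geq\rho_i/\rho_j\}=\{\rho_i^{-1}|f_i(x)|\geq\rho_j^{-1}|f_j(x)|\},
$$
so the data of $V$ is exactly a choice of inequality between $\rho_i^{-1}|f_i|$ and $\rho_j^{-1}|f_j|$ for every pair $i<j$, uniformly in $x\in V$.

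Second, I would observe that these pairwise inequalities define a total preorder on the values $\{\rho_k^{-1}|f_k(x)|\}_{k=1,\dots,n}$ that is constant in $x\in V$; in particular, there exists an index $k=k(V)\in\{1,\dots,n\}$ which is maximal for this preorder, meaning
$$
\rho_j^{-1}|f_j(x)|\leq\rho_k^{-1}|f_k(x)|\qquad\text{for all }x\in V\text{ and all }j.
$$
By the explicit description of $X_{k}$ recalled above, this says exactly $V\subseteq X_{k}$. Applying this to every member of $\Vc$ gives a refinement map $V\mapsto X_{k(V)}$, so $\Vc$ refines $\Uc$ as required.

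The only point that requires a small verification (and is essentially the main, though mild, obstacle) is that the total preorder described above is genuinely encoded by the Laurent conditions: strictly speaking $\leq$-type and $\geq$-type conditions on the same pair may both hold on some $V$, but then the two values coincide on $V$ and the maximality argument is unaffected; transitivity of $\leq$ on $\R_+$ then ensures that the pairwise inequalities globally order the $\rho_k^{-1}|f_k(x)|$, which is what is needed to extract $k(V)$. This works uniformly for both $t=an$ and $t=\dagger$ since it only uses the algebraic structure of $\Oc_X(X)$ and the fact that units can be inverted, which holds in the overconvergent setting by the same formulas as in the convergent one.
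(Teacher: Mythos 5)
Your proof is correct and follows essentially the same route as the paper's: form the Laurent covering generated by the ratios $f_if_j^{-1}$ with radii $\rho_i\rho_j^{-1}$, observe that on each member $V$ the pairwise comparisons of the $\rho_k^{-1}|f_k(x)|$ are fixed, and extract a maximal index $k(V)$ to get $V\subseteq X_{k(V)}$. Your explicit remarks on totality and transitivity of the resulting preorder are if anything slightly more careful than the paper's phrasing.
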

\begin{proof}
Let $\Vc$ be the Laurent covering generated by the family $\{(f_if_j^{-1},\rho_i\rho_j^{-1}\}$,
with $1\leq i<j\leq n$. Consider $V\in \Vc$. Defining $I=\{(i,j)\in \N^2,1\leq i<j\leq n\}$, we can
find a partition $I=I_1\coprod I_2$ such that
$$
V=\cap_{(i,j)\in I_1}\left(D(1,1|f_if_j^{-1},\rho_i\rho_j^{-1})\right)\cap
\cap_{(i,j)\in I_2}\left(D(1,1|f_jf_i^{-1},\rho_j\rho_i^{-1})\right).
$$
One defines a partial order on $\{1,\dots,n\}$ requiring that if $(i,j)\in I_1$, then
$i\sim{<}j$ of if $(i,j)\in I_2$ then $j\sim{<} i$. For each $i,j\in \{1,\dots,n\}$ with $i\neq j$,
then $i\sim{<}j$ or $j\sim{<} i$. Consider a maximal chain (which always exists)
$i_1\sim{<}\dots\sim{<} i_r$ of elements of $\{1,\dots,n\}$. Since $i_r$ is maximal,
we have that for any $i\in \{1,\dots,n\}$, $i\sim{<}i_r$, which implies $\rho_i|f_i(x)|\leq \rho_j|f_j(x)|$
for all $x\in V$, i.e.,
$$V\subset D(f_{i_r},\rho_{i_r}|f_1,\rho_1,\dots,\widehat{f_{i_r},\rho_{i_r}},\dots,f_n,\rho_n).$$
\end{proof}

\begin{theorem}[Tate's acyclicity theorem]
Suppose either that the base Banach ring is non-archimedean, or that $t\in \{\dagger,\{\dagger,s\}\}$
is an overconvergent type of analytic spaces.
The presheaf $\Oc_X$ is acyclic for the $G$-topology on $\Mb(A)$.
\end{theorem}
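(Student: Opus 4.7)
The plan is to follow Tate's classical strategy, as adapted to the archimedean dagger setting by Bambozzi, reducing acyclicity of general rational coverings to acyclicity of a two-element Laurent covering generated by a single function, which is then handled by splitting Laurent expansions into polar and holomorphic parts.

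First I would show that it suffices to verify acyclicity on Laurent coverings. By Lemma \ref{rational-laurent-refinement}, any rational covering $\Uc$ of $X = \Mb(A)$ admits a Laurent refinement $\Vc$ such that on each $V \in \Vc$ the restricted covering $\Uc_{|V}$ is generated by units, and by Lemma \ref{rational-laurent-refinement-units} such a unit-generated rational covering is itself refined by a Laurent covering. A standard two-step refinement argument for \v{C}ech cohomology (of the type used in \cite{BGR}, Chap. 8) then propagates acyclicity from Laurent coverings to arbitrary rational coverings, because if a refinement of $\Uc$ is acyclic for $\Oc_X$ on each element of $\Uc$ and $\Uc$ has acyclic \v{C}ech cohomology for the constant presheaf, the spectral sequence degenerates.

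Next I would reduce further to the case of a single two-element Laurent covering. A Laurent covering generated by $\{(f_i,\rho_i)\}_{i=1,\dots,n}$ is the product of the two-element coverings $\Uc_i = \{D^t(1,1|f_i,\rho_i), D^t(1,1|f_i^{-1},\rho_i^{-1})\}$; an induction on $n$, combined with the fact that a product of acyclic coverings is acyclic (restricting $\Uc_i$ to an element of $\Uc_1 \times \cdots \times \Uc_{i-1}$ produces a two-element Laurent covering of that element, to which the inductive hypothesis applies), reduces the problem to showing that for a single $f \in \Algrm(A)$ and $\rho > 0$, the sequence
\[
0 \to \Algrm(A) \to \Algrm(A(1,1|f,\rho)^t) \oplus \Algrm(A(1,1|f^{-1},\rho^{-1})^t) \to \Algrm(A(1,1|f,\rho|f^{-1},\rho^{-1})^t) \to 0
\]
is exact. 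Injectivity on the left is the already noted injectivity of the covering map, and exactness in the middle is formal from the universal properties of the rational domain algebras as quotients of $A(\rho^{-1}T)^t/(fT-1)$ and $A(\rho S)^t/(fS-1)$.

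The hard part will be surjectivity on the right. Given an element $h$ in the intersection algebra $A(1,1|f,\rho|f^{-1},\rho^{-1})^t$, one expands it as a Laurent series $h = \sum_{n \in \Z} a_n f^n$ with $a_n \in A$, and must split it as $h_+ + h_-$ with $h_+ \in \Algrm(A(1,1|f,\rho)^t)$ and $h_- \in \Algrm(A(1,1|f^{-1},\rho^{-1})^t)$. In the non-archimedean case the strong triangle inequality forces $|a_n|_A \rho^n \to 0$ in both directions and the naive splitting $h_+ := \sum_{n \geq 0} a_n f^n$, $h_- := \sum_{n < 0} a_n f^n$ converges on the respective domains; this is essentially Tate's original argument and works on any non-archimedean Banach base, as the only properties used are coefficient-wise estimates in $A$. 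In the overconvergent archimedean case the convergent splitting fails on the boundary, but the hypothesis $t \in \{\dagger, \{\dagger,s\}\}$ provides, by definition of overconvergence, strictly larger radii $\rho' > \rho$ and $\rho'' < \rho$ on which $h$ still converges; the naive splitting then converges with geometric slack on the closed sub-bi-annulus of radii $\rho$, which is exactly what is needed. This is Bambozzi's argument (\cite{Bambozzi}, Chap.~4) and its extension to a Banach base ring $R$ is mechanical, since all convergence estimates pass coefficient-wise through $A$.
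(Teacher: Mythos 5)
Your proposal is correct and follows essentially the same route as the paper: reduce to Laurent coverings via Lemmas \ref{rational-laurent-refinement} and \ref{rational-laurent-refinement-units}, induct down to a single two-element Laurent covering, and verify exactness of the resulting Mayer--Vietoris sequence by Tate's Laurent-series splitting as adapted by Bambozzi. The paper simply defers the final splitting step to \cite{Bambozzi}, Theorem 4.0.18, whereas you spell it out, correctly identifying where the non-archimedean versus overconvergent hypotheses enter.
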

\begin{proof}
Since the $G$-topology is generated by rational coverings, we may reduce
to them. Using Lemma \ref{rational-laurent-refinement} and \ref{rational-laurent-refinement-units},
we can refine a given rational covering by a (non-strict) Laurent covering, and then by induction
to the case of the covering $\,\Uc=\{D(1,1|f,\rho),D(1,1|f^{-1},\rho^{-1})\}$.
The proof now follows closely the Tate's one, adapted by Bambozzi to the archimedean setting
in \cite{Bambozzi}, Theorem 4.0.18: one has to check that the sequence
$$0\to A\to A(\rho^{-1} X\}^t/(X-f)\times A\{\rho Y)^t/(Yf-1)\to A(\rho^{-1} X,\rho Y)^t/(X-f,Xf-1)\to 0$$
is exact, which is done by Tate's tricks of the trade, as explained in loc. cit.
\end{proof}

\subsection{The \'etale, Nisnevich and pro-\'etale topology}
\label{topologies}
Let $R$ be a uniform ind-Banach ring and $t\in \{an,\{an,s\},\dagger,\{\dagger,s\}\}$.

We will now define general notions of differential calculus in a way
that is similar to the method used in scheme theory. We carefully
inform the reader that these notions will be interesting mostly for
dagger analytic spaces, but the general definitions will be useful
for comparison purposes.
\begin{definition}
Let $f:A\to B$ be a morphism of $t$-algebras over $R$.
\begin{enumerate}
\item The morphism $f$ is called
\emph{formally \'etale (resp. formally unramified, resp. formally smooth)} if for
every commutative square
$$
\xymatrix{
A\ar[r]^f\ar[d]	& B\ar[d]\ar@{..>}[dl]\\
C\ar[r]	& C/I}
$$
of $t$-algebras, with $I$ a nilpotent ideal, the dotted arrow
exists and is unique (resp. is unique if it exists, resp. exists).
\item It is called \emph{flat} (resp. \emph{finite}) if the underlying
morphism $\Algrm(f):\Algrm(A)\to \Algrm(B)$ is flat (resp. finite).
\item A morphism $f:X\to Y$ of $t$-analytic spaces is called
\emph{quasi-\'etale (resp. quasi-unramified, resp. quasi-smooth)}
if it is locally finitely presented and it is locally formally \'etale (resp. formally
unramified, resp. formally smooth) for the $G$-topology.
\item A quasi-\'etale morphism $f:X\to Y$ of non-strict Berkovich spaces
is said to \emph{have the Nisnevich property} if
every point $y\in Y_{top}$ has a domain neighborhood $U=\Mb(A)$ such that
$f$ has a section on $U$, meaning that there is a commutative diagram
$$
\xymatrix{& X\ar[d]^f\\
U\ar@{..>}@/^/[ur]\ar[r] & Y}
$$
\end{enumerate}
\end{definition}

\begin{definition}
Let $X$ be a $t$-analytic space over $R$. A family $\Rc=\{Y_i\to X\}_{i\in I}$
of \'etale morphisms is called an \emph{\'etale covering} if, for all
morphism $\Mb(A)\to X$ with $A$ a $t$-algebra over $R$, the family
$\Rc\times_X U:=\{Y_i\times_X \Mb(A)\to \Mb(A)\}_{i\in I}$
can be refined by a finite family $\{\Mb(B_j)\to \Mb(A)\}_{j\in J}$ of \'etale morphisms.
If we further suppose that $X$ and $Y$ are Berkovich spaces, we will say that it is a
 \emph{Nisnevich covering} if the morphism $\coprod_{j\in J}\Mb(B_j)\to \Mb(A)$
has the Nisnevich property.
\end{definition}

We now define the pro-\'etale topology following closely Scholze's approach
from \cite{Scholze-p-adic-Hodge-theory}, Section 3. This topology gives a better take
at completed \'etale cohomology.
Let $X$ be a $t$-analytic space and $X_{fet}$ be the category of finite \'etale
morphisms $f:Y\to X$.
\begin{definition}
The pro-finite \'etate site is defined to be the category $\pro X_{et}$ of pro-objects
of the category $X_{fet}$.
A morphism $U\to V$ of objects of $\pro X_{et}$ is called \'etale (resp. finite \'etale)
if it is induced by an \'etale (resp. finite \'etale) morphism $U_0\to V_0$ of objects
in $X_{et}$ by base extension along a morphism $V\to V_0$. A morphism
$U\to V$ of objects of $\pro X_{et}$ is called pro-\'etale if it can be written as a cofiltered
inverse limit $U=\lim U_i$ of objects $U_i\to V$ \'etale over $V$, such that $U_i\to U_j$
is finite \'etale and surjective for large $i>j$. Such a presentation $U=\lim U_i\to V$ is
called a pro-\'etale presentation.
The pro-\'etale site $X_{proet}$ has as underlying category the full subcategory
of $\pro X_{et}$ of objects that are pro-\'etale over $X$. Finally, a covering in
$X_{proet}$ is given by a family of pro-\'etale morphisms $\{f_i:U_i\to U\}$
such that $\coprod f_i:\coprod U_i\to U$ is an \'epimorphism.
\end{definition}

There is a natural projection
$$\eta:X_{proet}\to X_{et}.$$

We will denote $\Zbf_\ell$ the limit of the constant pro-\'etale sheaves $\Z/\ell^n\Z$,
and $\hat{\Zbf}$ the limit of the constant pro-\'etale sheaves $\Z/n\Z$.
We will also denote $\Qbf_\ell:=\Zbf_\ell\otimes_\Z\Q$ and $\Abf_f:=\hat{\Zbf}\otimes_\Z\Q$.
\begin{definition}
Let $X$ be a $t$-analytic space. The $\ell$-adic (resp. complete integral)
\'etale cohomology of $X$ is defined by
$$
\begin{array}{c}
H^*_{et}(X,\Z_\ell):=H^*(X_{proet},\Zbf_\ell)\\
\textrm{(resp. }H^*_{et}(X,\hat{\Z}):=H^*(X_{proet},\hat{\Zbf})\textrm{).}
\end{array}
$$
\end{definition}

\begin{example}
Let $X$ be a proper non-strict analytic space over $\Zbf_0=(\Z,|\cdot|_0)$ and
denote $\Qbf=(\Q,|\cdot|_0)$. It may be interesting to try to extend Scholze's
result on $p$-adic Hodge theory in
\cite{Scholze-p-adic-Hodge-theory} to give a comparison theorem between
the pro-\'etale cohomology of $X_{\bar{\Qbf}}$ with coefficients in $\hat{\Zbf}$ and the derived
de Rham cohomology of $X/\Zbf_0$. Such an extension has already been discussed in
the semistable algebraic setting (which is a strict analytic situation over $\Zbf_0$) by
Bhatt in \cite{Bhatt-derived-de-Rham}, Remark 10.22, but the use of analytic methods
may allow for a treatment of the question based only on Faltings' almost mathematical
methods (like in Scholze's approach), avoiding the semistable reduction hypothesis.
This will be further discussed in Subsection \ref{motivation-global-periods}.
\end{example}

A notion of overconvergent sub-analytic subset will be necessary for the
study of direct images of analytically constructible sheaves in the pro-\'etale topology. 
We refer to Martin's paper \cite{Martin-overconvergent-subanalytic} for the notion
of overconvergent sub-analytic subsets in strict $p$-adic geometry. We give
here an adaptation of his definition to our general setting.
\begin{definition}
Let $R$ be a uniform ind-Banach algebra and $X_{top}$ be the underlying topological
space of a dagger (resp. a strict dagger) space $X$ over $R$.
A \emph{semi-analytic subset of $X$} is a subspace in the boolean algebra generated
by its rational (resp. strictly rational) subsets (by finite union, finite intersections
and complements).
A \emph{sub-analytic subset in $X$} is a subset given by the projection
of a semi-analytic subset in $X\times_R D^\dagger_n(0,\rho)$ (resp. in
$X\times_R D^\dagger_n(0,1)$) for some $n\geq 0$ along the natural projection map to $X$.
\end{definition}

It is not clear that subanalytic subsets defined as above form a boolean algebra,
as it is the case in the strict $p$-adic setting thanks to the result of Martin,
loc. cit., Proposition 1.39, but this may be an interesting question to ask to model theoretists.

\section{Dagger analytic geometry and Archimedean compactifications}
\label{Dagger-analytic-Arakelov}
Let $\Zbf:=(\Z,|\cdot|_\infty)$ be the global analytic basis and
$\Zbf_0:=(\Z,|\cdot|_0)$ be its non-archimedean counterpart.
It is quite clear that any scheme locally of finite type over $\Z$ may
be equipped with a structure of non-strict dagger space over $\Zbf$. Indeed,
closed affine subschemes of $\A^n_\Z$ may be described as closed analytic
subspaces of the overconvergent analytic affine space $\A^{n,\dagger}_\Zbf$,
and since algebraic maps are overconvergent, they can be used to
define dagger spaces over $\Zbf$ from schemes locally of finite type over $\Z$.
We thus get a (not very natural) ``base restriction functor''
$$\An^{\dagger,s}_{\Zbf_0} \cong \Sch_{\Z} \longrightarrow \An^\dagger_\Zbf$$
from the category of schemes over $\Z$
(i.e., strict dagger spaces over $\Zbf_0$) to the category of dagger spaces over $\Zbf$.
All this shows that the category of non-strict global analytic spaces is a natural
recipient both for algebraic geometry and analytic geometry over various bases
like $\R$, $\Q_p$ or $\Z_p$.
\begin{definition}
The above defined functor
$$\Anrm^\dagger:\Sch_\Z\longrightarrow \An^{\dagger}_\Zbf$$
will be called the non-strict (dagger) analytification functor.
\end{definition}

We now ask the natural question of describing which schemes over $\Z$ may
be seen as \emph{strict} dagger spaces over $\Zbf$, i.e., are isomorphic
to schemes in the image of the functor
$$\An^{\dagger,s}_\Zbf\longrightarrow \An^{\dagger}_\Zbf.$$
This will lead us to the idea that an extension of a scheme structure over $\Z$ to
a strict dagger analytic space structure over $\Zbf$ may be naturally thought of
as a kind of Archimedean compactification, in the sense that is usually meant
in Arakelov geometry.

\begin{definition}
Let $X$ be a scheme over $\Z$.
An \emph{Archimedean compactification} of $X$ is a strict dagger analytic space
$\Anrm^{\dagger,s}(X)$ over $\Zbf$ together with an isomorphism
$$\Anrm^{\dagger,s}(X)\overset{\sim}{\longrightarrow} \Anrm^\dagger(X)$$
of non-strict analytic spaces over $\Zbf$.
\end{definition}

\begin{example}
\label{example-disc-line}
The unit disc $D^\dagger(0,1)_\Zbf$ is a (very) strict dagger space with the affine line as
base extension to $\Zbf_0$, but it is not an Archimedean compactification of the algebraic
affine line $\A^1_\Z$ because there is no isomorphism
$$D^\dagger(0,1)_\Zbf\overset{\sim}{\longrightarrow} \A^{1,\dagger}_{\Zbf}.$$
Indeed, such an isomorphism would identify the associated topological spaces,
but one of them is compact and the other is non-compact.
\end{example}

\subsection{Archimedean compactifications of projective schemes}
\label{archimedean-compactifications-projective}
To illustrate the notion of Archimedean compactification, we will describe it for the projective
space $\Pb^1_\Z$.
Recall from Example \ref{examples-global} that the natural isomorphism
$$\Z[X_0,1/X_0]\longrightarrow \Z[X_1,1/X_1]$$
given by $X_0\mapsto 1/X_1$ is the underlying ring map of an isomorphism
$$\Zbf\{X_0,1/X_0\}^\dagger\longrightarrow \Zbf\{X_1,1/X_1\}^\dagger$$
of overconvergent rational domain algebras over $\Zbf$.
One may paste the overconvergent global analytic discs
$$
\Db^{1,\dagger}_{\Zbf}:=\Mb(\Zbf\{X_0\}^\dagger)\textrm{ and }
\Db^{1,\dagger}_\Zbf:=\Mb(\Zbf\{X_1\}^\dagger)
$$
(with functions given by the polynomial algebras over $\Z$ with their
sup norms on all discs containing the global unit disc)
to get a global analytic version $\Pb^{1,\dagger}_\Zbf$ of the projective line,
that will be the Archimedean compactification that we were looking for.
It is even a very strict Archimedean compactification, that one may think
of as a kind of analytic projective space $\Pb^{n,\dagger}_{\F_{\{\pm 1\}}}$.
Adding the polydisc seminorm structures on the polynomial rings
gives an important additional information that may be thought as
some kind of ``Arakelov compactification''. Remark that one may see
the (overconvergent analytic) complex projective space $\Pb^{1,\dagger}_\C$
either as the pasting of two copies of $\A^{1,\dagger}_\C$ along $\G^{\dagger}_{m,\C}$
(which gives also an algebraic model for it over $\C$), or as the pasting of two
(overconvergent) discs $\Db^{1,\dagger}_\C$ along $\Db^{1,\dagger}_\C-\{0\}$.
Using the disc viewpoint ``breaks the $\G_m$-symmetry'' of the algebraic situation.
It is quite stricking that such a ``breaking of the $\G_m$'' symmetry can
be also done in the global analytic setting.

\begin{lemma}
\label{Archimedean-closed-immersion}
If $X$ is a scheme over $\Z$ that admits an archimedean compactification
$$\Anrm^{\dagger,s}(X)\overset{\sim}{\longrightarrow} \Anrm^\dagger(X),$$
then any closed subscheme $Z$ of $X$ also has an archimedean compactification
and the inclusion $Z\to X$ may be extended to $\Anrm^{\dagger,s}(Z)\subset\Anrm^{\dagger,s}(X)$.
\end{lemma}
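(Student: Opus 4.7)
The plan is to build $\Anrm^{\dagger,s}(Z)$ by taking fibered quotients of the given strict dagger compactification $\Anrm^{\dagger,s}(X)$ along the ideal sheaf of $Z$, and then to check that the non-strict analytification of this quotient reproduces $\Anrm^\dagger(Z)$. First I would choose an affinoid cover $\{\Mb(A_\alpha)\to \Anrm^{\dagger,s}(X)\}$ by strict dagger affinoid subspaces; such a cover exists by definition of a strict dagger analytic space. By Proposition \ref{strict-affinoid-algebraic} (base extension along the identity $\Zbf\to\Zbf_0$), each $\Algrm(A_\alpha)$ is a finitely presented $\Z$-algebra, and the induced $\{\Spec(\Algrm(A_\alpha))\to X\}$ is a Zariski cover of the scheme $X$ (via the isomorphism $\Anrm^{\dagger,s}(X)\otimes_{\Zbf}\Zbf_0\cong X$).

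On each such chart, the closed subscheme $Z\cap \Spec(\Algrm(A_\alpha))$ is defined by a finitely generated ideal $I_\alpha=(f_{\alpha,1},\dots,f_{\alpha,n_\alpha})\subset \Algrm(A_\alpha)$. I would then form the quotient strict dagger affinoid algebra
\[
B_\alpha := A_\alpha/(f_{\alpha,1},\dots,f_{\alpha,n_\alpha}),
\]
which lives in $\Aff^{\dagger,s}_\Zbf$ since it is a coequalizer of two maps out of a strict dagger rational domain algebra of the form $A_\alpha\langle T_1,\dots,T_{n_\alpha}\rangle^\dagger$ (sending $T_j$ to $f_{\alpha,j}$ and to $0$). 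This gives a closed embedding $\Mb(B_\alpha)\hookrightarrow \Mb(A_\alpha)$. On an overlap $\Mb(A_\alpha)\cap \Mb(A_\beta)$, admissibly covered by strict rational domains $\Mb(C)$ refining both charts, the pullback of $I_\alpha$ and of $I_\beta$ along the two restrictions coincide with the ideal of $Z$ in $\Algrm(C)$; since pushout along a rational domain embedding commutes with the coequalizer used to define the quotient, the local pieces $\Mb(B_\alpha)$ glue to a strict dagger analytic space $\Anrm^{\dagger,s}(Z)$ equipped with a closed embedding into $\Anrm^{\dagger,s}(X)$.

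Finally I would verify that the destrictification functor $\An^{\dagger,s}_\Zbf\to \An^\dagger_\Zbf$ sends this glued quotient to $\Anrm^\dagger(Z)$. Locally, destrictification is compatible with coequalizers and with rational domain pushouts (Proposition \ref{uniform-completion-algebra} and the definitions of $\Aff^{\dagger,s}$ versus $\Aff^{\dagger}$), so the non-strict analytification of $\Mb(B_\alpha)$ is the closed non-strict dagger subspace of the non-strict analytification of $\Mb(A_\alpha)$ cut out by the same equations. Gluing these gives the closed analytic subspace of $\Anrm^\dagger(X)$ defined by the ideal sheaf of $Z$, which is by construction $\Anrm^\dagger(Z)$. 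Composing the gluing with the given isomorphism $\Anrm^{\dagger,s}(X)\overset{\sim}{\to}\Anrm^\dagger(X)$ yields the desired isomorphism $\Anrm^{\dagger,s}(Z)\overset{\sim}{\to}\Anrm^\dagger(Z)$.

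The main obstacle is the gluing/compatibility step: one must check that the non-strict dagger quotient commutes with the strict dagger quotient along the destrictification functor, and that on rational overlaps the generators $f_{\alpha,i}$ and $f_{\beta,j}$ generate the same ideal in the overlap algebra. Both issues reduce to the fact that the ideal sheaf of $Z$ in $X$ is quasi-coherent, and that taking $\Algrm$ of a strict dagger affinoid algebra identifies with the underlying $\Z$-algebra of the corresponding scheme chart (Proposition \ref{strict-affinoid-algebraic}); so this compatibility is controlled by the algebraic fact that localization of $\Z$-algebras commutes with quotient by a finitely generated ideal, together with the preservation of finite coequalizers by the relevant functors among our various categories of dagger algebras.
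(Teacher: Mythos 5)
Your proposal is correct and follows essentially the same route as the paper's own (much terser) proof: cover $\Anrm^{\dagger,s}(X)$ by strict representable charts, write the equations of $Z$ in each chart, and observe that quasi-coherence of the ideal sheaf makes these compatible with the pasting maps. The only small imprecision is in presenting $B_\alpha$ as a coequalizer out of $A_\alpha\langle T_1,\dots,T_{n_\alpha}\rangle^\dagger$ with unit radii, since the $f_{\alpha,j}$ need not have sup norm $\leq 1$; but strictness of an affinoid algebra only constrains the middle term of the coequalizer, so one may freely take polydisc radii $\rho_j=\|f_{\alpha,j}\|_\infty$ and the argument goes through.
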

\begin{proof}
Let $Z\subset X$ be a closed subscheme, and let $\Anrm^{\dagger,s}(X)=\cup_i \Mb(A_i)$ be a
covering of the archimedean compactification of $X$ by strict representable domains over $\Zbf$.
Then one may write the equations of $Z$ in these charts and they are compatible by
construction with the pasting maps, so that $Z$ also has an archimedean compactification.
\end{proof}

Not every affine variety over $\Z$, written in explicit coordinates, can be easily
seen as an affine overconvergent analytic variety over $\Zbf$, because the solutions
of an equation in the affine line are not always contained
in the unit disc. However, the case of projective varieties is different, as we
will see from the following proposition.
\begin{proposition}
\label{projective-Arakelov}
If $X$ is a projective variety over $\Z$, then it has a natural Archimedean compactification
$$\Anrm^{\dagger,s}(X)\overset{\sim}{\longrightarrow} \Anrm^\dagger(X).$$
\end{proposition}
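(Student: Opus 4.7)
The plan is to reduce the statement to the case $X=\Pb^n_\Z$ using Lemma \ref{Archimedean-closed-immersion}: any projective scheme over $\Z$ admits by definition a closed immersion $X\hookrightarrow \Pb^n_\Z$ for some $n$, so once we exhibit an Archimedean compactification $\Pb^{n,\dagger,s}_\Zbf$ of projective space together with the required isomorphism, Lemma \ref{Archimedean-closed-immersion} automatically produces the compactification of $X$ as the closed strict dagger analytic subspace of $\Pb^{n,\dagger,s}_\Zbf$ cut out by the homogeneous defining equations of $X$.

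To construct $\Pb^{n,\dagger,s}_\Zbf$, I would generalize the $n=1$ pasting from Subsection \ref{archimedean-compactifications-projective}. For each $i\in\{0,\dots,n\}$ take a copy of the unit polydisc
$$\Db^{n,\dagger}_{\Zbf,i}:=\Mb\bigl(\Zbf\{X_{0,i},\dots,\widehat{X_{i,i}},\dots,X_{n,i}\}^\dagger\bigr),$$
whose coordinates all have grading $1$. For each pair $i\neq j$, identify the strict dagger rational subdomain $\{|X_{j,i}|\geq 1\}\subset \Db^{n,\dagger}_{\Zbf,i}$ with the corresponding domain $\{|X_{i,j}|\geq 1\}\subset \Db^{n,\dagger}_{\Zbf,j}$ via the transition morphism determined by $X_{i,j}\mapsto 1/X_{j,i}$ and $X_{k,j}\mapsto X_{k,i}/X_{j,i}$ for $k\neq i,j$; this is well defined on overconvergent rational domain algebras for exactly the reason given in Example \ref{examples-global}(3), since on $\{|X_{j,i}|\geq 1\}$ inside the unit polydisc $X_{j,i}$ is a unit with $|X_{j,i}|=|1/X_{j,i}|=1$. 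The cocycle condition on the triple overlaps $\{|X_{j,i}|\geq 1\}\cap\{|X_{k,i}|\geq 1\}$ reduces to the identities $X_{\ell,k}=X_{\ell,i}/X_{k,i}$ in the polynomial ring, hence holds automatically, and the gluing produces a strict dagger analytic space $\Pb^{n,\dagger,s}_\Zbf\in \An^{\dagger,s}_\Zbf$.

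Next I would exhibit the required isomorphism $\Pb^{n,\dagger,s}_\Zbf\overset{\sim}{\to}\Anrm^\dagger(\Pb^n_\Z)$ in $\An^\dagger_\Zbf$. The non-strict analytification is obtained by pasting the non-strict analytic affine spaces $\A^{n,\dagger}_\Zbf=\colim_\rho \Db^{n,\dagger}(0,\rho)$ along their algebraic transition maps; since each unit polydisc $\Db^{n,\dagger}_{\Zbf,i}$ sits as a strict dagger rational domain inside the $i$-th non-strict affine chart and the two sets of transition maps agree, the destrictification functor provides a tautological morphism $\Pb^{n,\dagger,s}_\Zbf\to \Anrm^\dagger(\Pb^n_\Z)$. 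The decisive observation, which distinguishes the projective case from Example \ref{example-disc-line}, is that the $n+1$ unit polydiscs already cover $\Anrm^\dagger(\Pb^n_\Z)$ at the level of Berkovich points: for any $x$ lying over $[x_0:\cdots:x_n]$, choosing $i$ with $|x_i(x)|=\max_k|x_k(x)|$ gives $|X_{k,i}(x)|=|x_k(x)|/|x_i(x)|\leq 1$ for every $k$, so $x\in \Db^{n,\dagger}_{\Zbf,i}$.

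The main obstacle I anticipate is upgrading this point-level covering to an honest admissible covering in the dagger $G$-topology, so that the tautological morphism is an isomorphism of analytic spaces and not merely a bijection on Berkovich points. Concretely, one must show that any rational domain of $\Anrm^\dagger(\Pb^n_\Z)$ restricted to a chart $U_i$ can be refined by strict rational domains of the corresponding unit polydiscs; after rescaling any dominant coordinate $X_{j,i}$ by an overconvergent inversion on $\{|X_{j,i}|\geq 1\}$, this reduces to Tate's acyclicity theorem for the strict dagger setting proved earlier in the text. Once this refinement is verified and checked to be compatible with the pasting maps on overlaps, the morphism $\Pb^{n,\dagger,s}_\Zbf\to \Anrm^\dagger(\Pb^n_\Z)$ is an isomorphism in $\An^\dagger_\Zbf$, yielding the desired Archimedean compactification.
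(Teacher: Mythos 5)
Your proposal is correct and follows essentially the same route as the paper: build the strict dagger model $\Pb^{n,\dagger,s}_\Zbf$ by pasting $n+1$ unit polydiscs along the loci $\{|X_{j,i}|\geq 1\}$ via the standard projective transition maps (which are contracting there, exactly as in the $\Pb^1$ example), and then descend to an arbitrary projective $X$ by Lemma \ref{Archimedean-closed-immersion}. If anything, you are more careful than the paper, which asserts the identification with $\Anrm^\dagger(\Pb^n_\Z)$ without addressing the admissibility of the polydisc covering that you explicitly flag and sketch how to resolve.
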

\begin{proof}
The case of the projective space $\Pb^n_\Z$ is obtained by generalizing directly
the above example: the natural morphism
$$
\begin{array}{ccc}
\Z[t_0,\dots,t_n,1/t_i]	& \to 	& \Z[t_0,\dots,t_n,1/t_j]\\
t_k					&\mapsto	& t_k/t_j\textrm{ si }k\neq i\\
t_i					&\mapsto 	& 1/t_j
\end{array}
$$
is bounded enough, so that it induces a morphism of overconvergent rational
domain algebras
$$
\Zbf\{t_0,\dots,t_n,1/t_i\}^\dagger\longrightarrow \Zbf\{t_0,\dots,t_n,1/t_j\}^\dagger
$$
over the base Banach ring $\Zbf=(\Z,|\cdot|_\infty)$ (these are just localizations of polynomial
algebras, but equipped with a family of norms induced by the over-seminorms of the sup norm
on the global unit polydisc). In this way, we get a very strict model for the projective space
$\Pb^n$. If $Z\subset \Pb^n_\Z$ is a closed sub-scheme, then
we may apply Lemma \ref{Archimedean-closed-immersion} to get an Archimedean
compactification of $Z$.
\end{proof}

\begin{remark}
It is clear from what we explained in Example \ref{example-disc-line} that general
schemes usually don't have Arithmetic compactifications. Indeed, the affine
line $\A^1_\Zbf$ is not representable in strict dagger spaces.
\end{remark}

\begin{remark}
Another approach to finding strict dagger models over $\Zbf$ of projective schemes
over $\Z$ may be given by the isomorphism
$$\Pb^1_\Z\cong \A^2_\Z-\{(0,0)\}/\G_{m,\Z}.$$
One may define a strict dagger model of $\Pb^1_\Z$ on $\Zbf$ by using the quotient analytic
space $\Db^{2,\dagger}(0,1)_\Zbf-\{(0,0)\}/U(1)_\Zbf$,
where $U(1)_\Zbf:=\Mb(\Zbf\{X,1/X\}^\dagger)$.
The analytic space
$$\Db^{2,\dagger}(0,1)_\Zbf-\{(0,0)\}$$
may be defined by pasting $\Db^\dagger(0,1)_\Zbf\times U(1)_\Zbf$ and
$U(1)_\Zbf\times \Db^\dagger(0,1)_\Zbf$ along
their common rational domain $U(1)_\Zbf\times U(1)_\Zbf$. This gives another way of
presenting the strict dagger projective space, as the quotient analytic sheaf
(i.e., set-valued sheaf with values in sets on the rational domain topology on strict dagger
algebras)
$$\Pb^{1,\dagger}_\Zbf:=\Db^{2,\dagger}(0,1)_\Zbf-\{(0,0)\}/U(1)_\Zbf.$$
\end{remark}

\begin{proposition}
\label{projective-Archimedean-compactification}
The following diagram of functors
$$
\xymatrix{
\ProjAn_\Zbf^{\dagger,s}\ar[d]\ar[r]^(0.38)\sim	& \ProjAn^{\dagger,s}_{\Zbf_0}\cong \Proj_\Z\ar[d]\\
\An^{\dagger,s}_\Zbf\ar[r]				& \An^\dagger_\Zbf}
$$
is (2)-commutative, with vertical arrows fully faithful and the upper horizontal
arrow an equivalence.
\end{proposition}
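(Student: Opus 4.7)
The plan is to treat the three claims---$2$-commutativity, equivalence of the upper horizontal arrow, and full faithfulness of the vertical arrows---separately. First I would identify the four functors: the upper horizontal arrow is base change $X\mapsto X\otimes_\Zbf\Zbf_0$ followed by the identification $\ProjAn^{\dagger,s}_{\Zbf_0}\cong \Proj_\Z$ of Proposition \ref{strict-affinoid-algebraic}; the lower horizontal arrow is the destrictification functor of Proposition \ref{uniform-completion-algebra}; the left vertical is the inclusion of a full subcategory, hence tautologically fully faithful; and the right vertical is the non-strict dagger analytification $\Anrm^\dagger$. Commutativity is immediate from the construction in Proposition \ref{projective-Arakelov}: both composites send a projective strict dagger space $X$ over $\Zbf$ to the non-strict dagger space obtained by interpreting the same overconvergent polydisc pasting data in the larger non-strict category, with the $\Zbf_0$-fiber of $X$ being its underlying scheme by Proposition \ref{strict-affinoid-algebraic}.

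For the upper horizontal arrow, essential surjectivity is precisely Proposition \ref{projective-Arakelov}. Injectivity of $\Hom(X,Y)\to\Hom(X_0,Y_0)$ follows locally on strict dagger polydisc charts, where Proposition \ref{strict-affinoid-algebraic} identifies the $\Zbf_0$-fiber of a strict dagger algebra with its underlying $\Z$-algebra, so two morphisms of strict dagger spaces with the same underlying map of schemes must coincide. For surjectivity, given an algebraic $\phi_0\colon X_0\to Y_0$, its graph $\Gamma_{\phi_0}$ is a closed subscheme of the projective scheme $X_0\times_\Z Y_0$; applying Lemma \ref{Archimedean-closed-immersion} to the strict dagger compactification $X\times_\Zbf Y$ of $X_0\times_\Z Y_0$ produces a closed strict dagger subspace $\tilde\Gamma\subset X\times_\Zbf Y$ whose $\Zbf_0$-fiber is $\Gamma_{\phi_0}$. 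The main obstacle is then to show that the first projection $\tilde\Gamma\to X$ is an isomorphism of strict dagger spaces, not merely after base extension to $\Zbf_0$. Working locally on polydisc charts of $X$ reduces this to checking that a morphism of strict dagger polydisc algebras over $\Zbf$ whose underlying $\Z$-algebra map is an isomorphism is itself an isomorphism, which holds because the sup-over-seminorm structure on a polynomial algebra over $\Zbf$ is uniquely pinned down by the polydisc data.

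Finally, for the full faithfulness of the right vertical arrow, the $2$-commutativity already established together with the equivalence of the top row identifies $\Anrm^\dagger$ on projective schemes with the composite of the left vertical inclusion, a quasi-inverse to the top equivalence, and the bottom destrictification. It thus suffices to check that destrictification is fully faithful when restricted to projective strict dagger spaces over $\Zbf$---a global analytic GAGA statement. The strategy is to show that any non-strict dagger morphism $f\colon X\to Y$ between projective strict dagger spaces is automatically strict: on a finite polydisc cover $\{\Mb(A_i)\}$ of $X$, each restriction $f_{|\Mb(A_i)}$ maps into a quasi-compact union of polydisc charts of $Y$ by projectivity (hence quasi-compactness) of $Y$, reducing the claim to a morphism of strict dagger affinoid algebras, which by Proposition \ref{strict-affinoid-algebraic} is determined by its underlying $\Z$-algebra map and is therefore already strict by construction.
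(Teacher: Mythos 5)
Your treatment of the upper horizontal arrow follows the paper's route: essential surjectivity is Proposition \ref{projective-Arakelov}, faithfulness is checked by base change to $\Zbf_0$, and fullness goes through the graph $\Gamma_f$, which is projective and hence admits an Archimedean compactification inside $X\times_\Zbf Y$. You are in fact more explicit than the paper about the real crux of the fullness step, namely that the projection $\tilde\Gamma\to X$ must be an isomorphism \emph{of strict dagger spaces} and not merely after base extension to $\Zbf_0$; the paper elides this. But your justification of that crux is not right: the reason is not that "the sup-over-seminorm structure on a polynomial algebra over $\Zbf$ is uniquely pinned down by the polydisc data," it is that projectivity of $Y$ forces the scheme-theoretic inverse $x\mapsto(x,f(x))$ to be bounded — every point of $\Pb^n$ lies in one of the unit polydisc charts, so after refining the polydisc cover of $X$ the map $f$ lands in unit polydiscs of $Y$ with sup-norm $\leq 1$. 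Without an argument of this kind the inverse algebra map need not be a contracting (or even bounded) morphism of strict dagger algebras.

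The genuine gap is in your last paragraph. You invoke Proposition \ref{strict-affinoid-algebraic} to claim that a morphism of strict dagger affinoid algebras over $\Zbf$ "is determined by its underlying $\Z$-algebra map and is therefore already strict by construction." That proposition applies only over an integral ring with its \emph{trivial} seminorm, i.e.\ over $\Zbf_0$, and it fails badly over $\Zbf=(\Z,|\cdot|_\infty)$: the map $T\mapsto 2T$ is an endomorphism of $\Z[T]$ but is not an endomorphism of the strict unit disc $\Mb(\Zbf\{T\}^\dagger)$, since $|2T|=2$ on the unit disc. So boundedness for the polydisc seminorms is a real constraint over $\Zbf$, and your "automatic strictness" argument for non-strict morphisms between projective strict dagger spaces does not go through; indeed the paper's own Remark \ref{strict-fully-faithful} treats full faithfulness of the destrictification functor over $\Zbf$ as an open problem. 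The paper avoids this entirely: it takes the vertical arrows to be fully faithful by construction (the left one is the inclusion of a full subcategory, and the right one is set up so that its full faithfulness is definitional), rather than deducing full faithfulness of the right vertical arrow from a GAGA statement for destrictification as you propose. If you want to keep your route, you need an actual proof that a non-strict dagger morphism between the analytifications of projective schemes is algebraic, which is a nontrivial GAGA-type assertion over $\Zbf$.
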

\begin{proof}
The vertical arrows are fully faithful by definition.
The fact that the upper horizontal arrow is essentially surjective follows from proposition
\ref{projective-Arakelov}. The fact that it is faithful is clear. The fact that it is full
is less clear. If $f:X\to Y$ is a morphism of projective varieties, then its graph
$\Gamma_f$, defined as the pullback
$$
\xymatrix{
\Gamma_f\ar[r]\ar[d]			& X\times Y\ar[d]^{f\times \id}\\
Y\ar[r]^{\Delta_Y}			& Y\times Y}
$$
is projective. By proposition \ref{projective-Arakelov}, this graph
has an Arithmetic compactification. Since $f:X\to Y$ may be written as the
pullback of the projection $\Gamma_f\to Y$ along the identity map, it
also has an Archimedean compactification, so that the upper horizontal arrow of the
diagram in the statement of the proposition is an equivalence.
\end{proof}

\subsection{Logarithmic Archimedean compactifications of quasi-projective schemes}
\label{logarithmic-Arakelov}
We would like to have a way to associate to a quasi-projective variety
over $\Z$ some kind of strict global dagger space over $\Zbf$
with generic fiber the given variety, that will give an Arakelov model
of the given variety. For example, if we start from $X=\A^1_\Z$, this wish
can't be fulfilled stricto sensu.
A way to overcome this problem with non-projective schemes, at least at the
cohomological level, was paved by Deligne in \cite{De12}, and then formalized
geometrically by Fontaine-Illusie and Hyodo-Kato, by the use of
logarithmic analytic spaces. In the above example, one replaces the affine
line over $\Zbf$ by the logarithmic analytic space over $\Zbf$ given by
the projective line over $\Zbf$, together with the sheaf of
monoids $\Mc:=j_*\Oc_{\A^1}^*\cap \Oc_{\Pb^1}\subset \Oc_{\Pb^1}$,
where $j:\A^1\to \Pb^1$ is the natural embedding. The definition
of this monoid of course uses some non-strict dagger geometry, since even
$\A^1$ and the structural sheaf can't be defined in the strict setting in general,
but the analytic space in play (here $\Pb^1$) is really a strict analytic space.
So strict logarithmic geometry gives an intermediary setting between strict analytic
geometry and non-strict analytic geometry.

\begin{definition}
Let $t\in \{an,\{an,s\},\dagger,\{\dagger,s\}\}$ be a type of analytic spaces.
Let $X$ be a $t$-analytic space over an ind-Banach ring $R$. A pre-logarithmic structure
on $X$ is given by a sheaf of monoids $\Mc$ on $X_{et}$, together with a morphism
of multiplicative monoids
$$\alpha:\Mc\to \Oc_X.$$
The pre-log structure is called a log structure if $\alpha$ induces an isomorphism
$$\alpha^{-1}(\Oc_X^*)\overset{\sim}{\longrightarrow} \Oc_X^*.$$
\end{definition}

It is easy to generalize the notion of Archimedean compactification to the
logarithmic setting.
\begin{definition}
Let $(X,\Mc)$ be a log scheme over $\Z$, and $\An^\dagger(X,\Mc)$ be the associated
non-strict logarithmic dagger space over $\Zbf$. An \emph{Archimedean compactification}
of $(X,\Mc)$ is a strict logarithmic dagger space $\Anrm^{\dagger,s}(X,\Mc)$ over $\Zbf$ together
with an isomorphism
$$\Anrm^{\dagger,s}(X,\Mc)\overset{\sim}{\longrightarrow} \Anrm^\dagger(X,\Mc)$$
of non-strict dagger logarithmic spaces over $\Zbf$.
\end{definition}

We now may now extend Proposition \ref{projective-Arakelov} to the case of semistably
compactifiable schemes.
\begin{proposition}
Let $X$ be a smooth scheme over $\Z$ that admits a projective compactification $\bar{X}$
over $\Z$ such that $D:=\bar{X}\backslash D$ is a divisor with normal crossings.
Then the associated log-scheme $(\bar{X},\Mc_D)$, where $\Mc_D:=j_*\Oc_X^*\cap \Oc_{\bar{X}}$,
and where $j:X\to \bar{X}$ is the natural embedding, has a canonical
Archimedean compactification $\An^{\dagger,s}(\bar{X},\Mc)$.
\end{proposition}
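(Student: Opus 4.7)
The plan is to first produce the underlying strict dagger analytic space and its distinguished closed subspace, then install the log structure by mimicking the algebraic formula, and finally verify that base extension along $\Zbf \to \Zbf_{0}$ recovers $(\bar{X},\Mc_D)$.

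First, since $\bar{X}$ is projective over $\Z$, Proposition \ref{projective-Arakelov} provides a canonical Archimedean compactification $\Anrm^{\dagger,s}(\bar{X})$: a strict dagger analytic space over $\Zbf$ whose base extension along $\Zbf \to \Zbf_{0}$ is $\bar{X}$. The divisor $D \subset \bar{X}$ is a closed subscheme, so by Lemma \ref{Archimedean-closed-immersion} it extends canonically to a closed strict dagger analytic subspace $\Anrm^{\dagger,s}(D) \hookrightarrow \Anrm^{\dagger,s}(\bar{X})$, obtained by lifting the local equations of $D$ in the representable charts covering $\Anrm^{\dagger,s}(\bar{X})$ and pasting them with the transition maps of Section \ref{archimedean-compactifications-projective}. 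Write $j^{\dagger}$ for the inclusion of the admissible open complement $\Anrm^{\dagger,s}(\bar{X}) \setminus \Anrm^{\dagger,s}(D)$ into $\Anrm^{\dagger,s}(\bar{X})$.

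Next, install the log structure by the direct image construction: define a sheaf of multiplicative monoids on the \'etale site of $\Anrm^{\dagger,s}(\bar{X})$ by
$$
\Mc \; := \; j^{\dagger}_{*}\Oc^{*}_{\Anrm^{\dagger,s}(\bar{X}) \setminus \Anrm^{\dagger,s}(D)} \; \cap \; \Oc_{\Anrm^{\dagger,s}(\bar{X})},
$$
with $\alpha \colon \Mc \to \Oc_{\Anrm^{\dagger,s}(\bar{X})}$ the natural inclusion. The isomorphism $\alpha^{-1}(\Oc^{*}) \overset{\sim}{\to} \Oc^{*}$ is immediate from the definition, so this is indeed a log structure. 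The normal crossings hypothesis gives, \'etale locally on $\bar{X}$, coordinates in which $D$ is a union of coordinate hyperplanes; lifting these coordinates to the strict dagger charts furnishes a chart by the free commutative monoid $\N^{r}$, showing that $(\Anrm^{\dagger,s}(\bar{X}), \Mc)$ is fine.

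Finally, to check that base extension recovers $(\bar{X}, \Mc_D)$: the base change of $\Anrm^{\dagger,s}(\bar{X})$ along $\Zbf \to \Zbf_{0}$ is $\bar{X}$ by Proposition \ref{projective-Arakelov}, and that of $\Anrm^{\dagger,s}(D)$ is $D$; the \'etale site of the strict dagger space over $\Zbf_{0}$ is equivalent (by Proposition \ref{strict-affinoid-algebraic}) to the \'etale site of the underlying scheme, and under this equivalence $j^{\dagger}_{*}\Oc^{*}$ corresponds to $j_{*}\Oc^{*}_{X}$. This yields the desired isomorphism of log analytic spaces. Canonicity follows from the canonicity of Proposition \ref{projective-Arakelov} and of the direct image construction.

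The main obstacle is the base change step for the monoid sheaf: one must show that the formation of $j^{\dagger}_{*}\Oc^{*}$ commutes with the change of base ring from $\Zbf$ to $\Zbf_{0}$, i.e. that computing ``regular functions invertible off $D$'' in the strict dagger world agrees with the analogous algebraic computation. This is where the normal crossings assumption becomes essential: it lets one reduce the question, \'etale locally, to the model case $\bar{X} = \Db^{n,\dagger}(0,1)_{\Zbf}$ with $D$ a union of coordinate hyperplanes, where the chart $\N^{r} \to \Oc$ by monomials is visibly the same before and after base change, and the comparison reduces to a tautology.
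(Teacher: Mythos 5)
Your proposal is correct and follows essentially the same route as the paper: the paper's proof simply invokes Proposition \ref{projective-Archimedean-compactification} to compactify the closed immersion $D\to\bar{X}$ and then declares that "the associated logarithmic strict dagger space will do the job," which is exactly your construction (your appeal to Lemma \ref{Archimedean-closed-immersion} instead is an equivalent way to compactify a closed subscheme). Your additional verification that the direct-image monoid sheaf is a genuine log structure and is compatible with base extension to $\Zbf_0$ fills in details the paper leaves implicit, and your observation that this compatibility is where the normal-crossings hypothesis enters is a worthwhile point the paper does not spell out.
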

\begin{proof}
The closed inclusion $D\to \bar{X}$ is a morphism of projective schemes over $\Z$ that
has an Archimedean compactification $\Anrm^{\dagger,s}(D)\to \Anrm^{\dagger,s}(\bar{X})$
by Proposition \ref{projective-Archimedean-compactification}. The associated logarithmic
strict dagger space $\Anrm^{\dagger,s}(X,\Mc_D)$ will do the job.
\end{proof}

\begin{remark}
\label{h-local-archimedean-compactifications}
We can still say something in the non-semistable case,
using de Jong's resolution of singularities, as explained
by Beilinson in \cite{Beilinson-derived-de-Rham}.
Let $X$ be a smooth quasi-projective scheme over $\bar{\Q}$.
De Jong's theorem implies that a basis for the $h$-topology on $X$ is
given by arithmetic semistable pairs $(U,\bar{U})/\bar{\Z}$
(a smooth compactification of a smooth variety with boundary a normal crossing divisor).
To each such pair, one may associate a logarithmic scheme
over $\bar{\Z}$ that has an Archimedean compactification
over $\bar{\Zbf}^\dagger$. So we may say that in some sense,
every scheme over $\bar{\Q}$ may be $h$-locally logarithmically Arithmetically
compactified. This construction may give a natural setting to explain geometrically
Arakelov-motivic cohomology \cite{Arakelov-motivic-cohomology-I}
in a way that avoids the direct use of Deligne cohomology.
This point will be further discussed in Remark \ref{Arakelov-motivic}.
This may also give a natural setting to define global period rings
by derived periods \`a la Beilinson-Bhatt. This point will be further discussed in
Subsection \ref{motivation-global-periods}.
\end{remark}

\subsection{A dagger arithmetic Riemann-Roch problem}
\label{dagger-arithmetic-Riemann-Roch}
It is quite tempting to generalize Riou's homotopy theoretic approach
to the Riemann-Roch theorem from \cite{Riou-K-theory} by looking at
it as written in the setting of homotopy theory of strict analytic spaces
over the base $\Zbf_0=(\Z,|\cdot|_0)$
(in a sense to be explained in Section \ref{global-analytic-motives}),
and trying to extend it to strict analytic spaces over $\Zbf=(\Z,|\cdot|_\infty)$. We will call the
question of this extension the \emph{dagger arithmetic Riemann-Roch problem}.

One may define the dagger general linear group as the sheaf on dagger algebras over
$\Zbf$ given by
$$
\GL_n:A\mapsto \GL_{n}(\Alg(A)).
$$

Since $\A^1:A\mapsto \Alg(A)$ is not representable in the category of strict dagger spaces over
$\Zbf_0$, it is quite reasonable to imagine that the same applies to the general linear group
for $n>1$.
It is quite easy, however, to define the strict global dagger analog of the classifying
space $\BGLbf$ used by Riou: in $\A^1$-homotopy theory, this space is described as the infinite
Grassmannian $\Gr_\infty$ given by the colimit of the systems $(\Gr_{d,n})_{(d,n)\in \N^2}$
where the transition morphisms are of the form $\Gr_{d,r}\to \Gr_{1+d,r}$ and
$\Gr_{d,r}\to \Gr_{d,r+1}$. We thus only have to show that these varieties
and maps have a dagger Archimedean compactification, i.e., a strict model over $\Zbf$,
which is true since they are projective,
so that we can apply Proposition \ref{projective-Arakelov}.

Another approach to this problem, that is followed by Karoubi and
Villamayor in \cite{Karoubi-Villamayor}, and more recently by Tamme
\cite{Tamme}, is to replace the group $\GL_n$ by the simplicial
group $\GL_n^\bullet$ given by
$$
\GL_n^\bullet:A\mapsto \GL_n(\Alg(A\{\Delta^\bullet\}^\dagger)),
$$
where the simplicial ring $A\{\Delta^\bullet\}^\dagger$ is defined by
$$
A\{\Delta^n\}^\dagger:=A\{T_0,\dots,T_n\}^\dagger/(\sum T_i-1).
$$
The classifying space may then be defined as the total $\infty$-stack
associated to the functor
$$\BGLbf:A\mapsto \Z\times B_\bullet\GL(A\{\Delta^\bullet\}^\dagger)$$
with values in bisimplicial sets.
One then defines the (overconvergent) Karoubi-Villamayor $K$-theory of $A$ as
$$\KV_i(A)=\pi_i(\BGLbf).$$
Following \cite{Tamme}, 2.4, this gives back algebraic $K$-theory for $i\geq 1$
in the case of a trivially normed integral ring $R$ that is supposed to be regular.
Remark that both approaches may be related by working in the setting of
global analytic homotopy theory described in Section \ref{global-analytic-motives}.

Once given the correct Archimedean dagger compactification of $\BGLbf$, and using
the notion of rational motivic cohomology proposed in Section \ref{global-analytic-motives},
one may ask the following question: if $f:X\to S$ is a projective smooth morphism
of strict dagger analytic spaces over $\Zbf$, does the following diagram
$$
\xymatrix{
\R f_*\BGLbf_\Q\ar[rrr]^{\R f_*(\ch.\Td(T_f))}\ar[d]^{f_*}	&&&
\prod_{i\in \Z}\R f_* \Hbf_{\Q(i)}[2i]\ar[d]^{f_*}\\
\BGLbf_\Q\ar[rrr]^{\ch}							&&&
\prod_{i\in \Z}\Hbf_{\Q(i)}[2i]}
$$
commute in the strict rational stable homotopy theory $\SH^{\dagger,s}(S)$?
The same question may apply in the quasi-projective case by replacing
$f_*$ by the proper direct image $f_!$ and motivic cohomology $\Hbf$ by
its version with proper support $\Hbf_c$.

\begin{remark}
As a corollary of this homotopy theoretic Riemann-Roch theorem, one would
get a Riemann-Roch theorem relating the direct image of higher Artin-Verdier
$K$-theory classes to the direct image of their Chern classes in higher
Artin-Verdier motivic cohomology (to be defined as motivic cohomology
of strict analytic spaces over $\Zbf$ in the sense of
Section \ref{global-analytic-motives}).
This would give a kind of higher arithmetic Riemann-Roch for Artin-Verdier
motivic \'etale cohomology, that is quite different in nature from
the Riemann-Roch statements proved on Arakelov-motivic cohomology
in \cite{Arakelov-motivic-cohomology-II}, since the Hodge filtration is not included in our approach.
\end{remark}

\begin{remark}
To get a global analytic interpretation of Arakelov-motivic cohomology,
one really needs to combine the global analytic information given by strict
global motivic cohomology with the differential information given by
Hodge-filtered de Rham cohomology. This problem may be approached
by trying to globalize the period isomorphism of $p$-adic Hodge theory
(see Subsection \ref{motivation-global-periods}).
\end{remark}

\begin{remark}
A global analytic interpretation of Arakelov-motivic cohomology may also be
attained by taking inspiration in the work of Karoubi on multiplicative
$K$-theory \cite{Karoubi-K-theorie-multiplicative}. As explained to the author
by Gregory Ginot, the use of the cyclic Chern character has the great advantage
on the usual geometric
approach to avoid the introduction of denominators of the form $\frac{1}{n!}$ in the
definition of the Chern character map
$$\ch:K_*(X)\longrightarrow HC_{-}(X).$$
In any case (i.e., even in the Karoubi approach), to get the right $p$-torsion information,
one needs to work with a version of (maybe Hodge-completed) derived Hodge-filtered de
Rham cohomology (or, in the cyclic setting, simply cyclic homology) relative to the global
analytic base $\Zbf$, to be defined in Subsection \ref{derived-de-Rham}.
A global analytic version of the Chern character will be discussed in Section \ref{Chern-character}.
\end{remark}

\section{Global analytic motives}
\label{global-analytic-motives}
The aim of this section is to give a formalism for analytic motives \`a la
Morel-Voevodsky \cite{Morel-Voevodsky}, that
gives back usual motives in the strict case over a trivially valued integral ring,
and that also gives back good categories of rigid and complex analytic motives,
similar to those defined by Ayoub in \cite{Ayoub-Betti} and \cite{Ayoub-analytic-motives}.
We will define \'etale, Nisnevich and pro-\'etale motives, with a preference to
\'etale motives, since they seem to have better properties
than Nisnevich motives with respect to the integral Hodge and Tate conjectures
(see \cite{Rosenschon-Srinivas}), and they also allow a direct definition of the
\'etale realization functor with finite coefficients (remark however that
over a characteristic $p$-basis, they are with coefficients in $\Z[1/p]$,
so that they don't give a good information on the $p$-part of the cohomology).
We will also use the pro-\'etale topology
that gives a better take at the completed \'etale realization functor.

We refer to Ayoub's thesis \cite{Ayoub-six-operations-I} and \cite{Ayoub-six-operations-II}
for a systematic treatment of the homotopy theory of schemes and to
Cisinski and Deglise for a refined treatment of the theory of motives
with rational coefficients \cite{Cisinski-Deglise-mixed-motives}.
We will use the language of $\infty$-categories (for which we refer to
Lurie's books \cite{Lurie-higher-topos-theory} and \cite{Lurie-higher-algebra}) to get
a shorter presentation, but the language of model
categories and symmetric spectra in presheaves, developed by Ayoub
in \cite{Ayoub-six-operations-II} has the advantage of allowing more
explicit computations. We will give a presentation of our theory
that is a neat combination of the viewpoint used by Roballo in his
thesis \cite{Robalo} and of Ayoub in his works on motives and analytic motives.

\subsection{Stable homotopy theory of sheaves}
The analog in global analytic geometry of the affine line used in algebraic homotopy theory
(and of the unit interval used in classical homotopy theory)
will be the unit disc. It indeed gives back the algebraic affine line in
the strict situation over a trivially valued integral ring.
The stable homotopy theory of analytic spaces will be
constructed by using $\infty$-sheaves (aka $\infty$-stacks) on the \'etale
(resp. Nisnevich, resp. pro-\'etale)
site of analytic spaces with values in a stable presentable symmetric monoidal $\infty$-category
$(\Mfk,\otimes)$, that will be the stable $\infty$-category $(\Sp,\wedge)$ of spectra,
the stable $\infty$-category $(\Sp(\Mod_s(\Ac)),\otimes)$ of simplicial module spectra
on a sheaf $\Ac$ of rings (that will often be $\Z$, $\Z/n\Z$ of $\Q$),
or (in the caracteristic zero situation) the stable $\infty$-category $(\Mod_{dg}(\Ac),\otimes)$ of
differential graded modules over $\Ac$. The main difference between the simplicial and
the differential graded setting is that commutative differential graded algebras give correct
strictifications of homotopy commutative algebras only over $\Q$.
If we work with modules,
we will get categories of motives, and if we work with spectra, we will
get stable homotopy categories.

We refer to Robalo \cite{Robalo} (see also \cite{Robalo-these})
for a short introduction to the $\infty$-categorical
tools used in this subsection, and to Lurie's book \cite{Lurie-higher-algebra} for a
complete reference on homotopical algebraic tools.
We start by recalling from Robalo's \cite{Robalo} important facts about the stabilization
of symmetric monoidal $\infty$-categories.
\begin{theorem}
Let $(\Cc,\otimes)$ be a presentable symmetric monoidal $\infty$-category and
$T\in (\Cc,\otimes)$ be an object. There exists a natural monoidal
functor $(\Cc,\otimes)\to (\Cc[T^{\otimes -1}],\otimes)$
from $\Cc$ to a presentable symmetric monoidal $\infty$-category such
that for every symmetric monoidal category $\Dc$, the natural morphism
$$
\Map((\Cc[T^{\otimes -1}],\otimes),(\Dc,\otimes))\longrightarrow
\Map_{T^{\otimes -1}}((\Cc,\otimes),(\Dc,\otimes)),
$$
from symmetric monoidal functors to symmetric monoidal functors that make
$T$ invertible, is an equivalence.
If the object $T$ is further symmetric, meaning that there is a natural
$2$-equivalence in $\Cc$ between the cyclic permutation $\sigma_{(123)}$
on $T\otimes T\otimes T$ and the identity map, given by a $2$-morphism:
$$
$$
then the underlying $\infty$-category of $\Cc[T^{\otimes -1}]$ is identified
with the stabilization
$$
\Stab_{T}(\Cc):=
\colim(\cdots\overset{T\otimes -}{\longrightarrow}\Cc\overset{T\otimes -}{\longrightarrow}
\Cc\overset{T\otimes -}{\longrightarrow}\cdots).
$$
\end{theorem}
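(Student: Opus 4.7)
The plan is to split the statement into two parts: the existence of the universal $(\Cc[T^{\otimes-1}],\otimes)$ in the $\infty$-category $\Pr^L_\otimes$ of presentable symmetric monoidal $\infty$-categories (with colimit-preserving symmetric monoidal functors), and the identification of its underlying $\infty$-category with the sequential colimit $\Stab_T(\Cc)$ when $T$ is symmetric. For the first part I would work in $\Pr^L_\otimes$ and construct $\Cc[T^{\otimes-1}]$ as a reflective localization: the class of morphisms to invert is generated under tensoring and colimits by the ``cup-product'' map $u\otimes\id:T\otimes X\to X$ arising from a chosen map $u:\tunit\to T$, or more invariantly by the class of arrows $f:A\to B$ such that $T\otimes f$ is an equivalence. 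Using Lurie's localization machinery for presentable symmetric monoidal $\infty$-categories (HA.4.1.7 together with the compatibility of localizations with the monoidal structure, HA.2.2.1), one obtains a presentable symmetric monoidal $\infty$-category and a symmetric monoidal left adjoint $\Cc\to\Cc[T^{\otimes-1}]$ satisfying the stated universal property — the mapping-space equivalence is then formal from the universal property of Bousfield localization.

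For the second part I would construct a candidate symmetric monoidal $\infty$-category whose underlying $\infty$-category is the sequential colimit $\Stab_T(\Cc)=\colim(\Cc\xrightarrow{T\otimes -}\Cc\xrightarrow{T\otimes -}\cdots)$, and then verify that it has the required universal property. As an $\infty$-category, $\Stab_T(\Cc)$ is computed inside $\Pr^L$, where sequential colimits along left adjoints agree with the limits of the associated right adjoints in $\widehat{\Cat}_\infty$. The tensor functor $T\otimes -$ becomes an equivalence on $\Stab_T(\Cc)$ by cofinality (shifting the sequence by one), so any symmetric monoidal structure on $\Stab_T(\Cc)$ compatible with the one on $\Cc$ would automatically make $T$ invertible; the real content is then to produce such a structure and to check the universal property against any target where $T$ is invertible.

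The hard part, which I expect to be the main obstacle, is precisely constructing the symmetric monoidal structure on $\Stab_T(\Cc)$. Formally, $\otimes$ descends to the colimit if and only if the transition functor $T\otimes-$ is symmetric monoidal up to coherent homotopy; writing out the first nontrivial coherence, one finds that the $\Sigma_3$-equivariance of $\otimes^3$ forces the cyclic permutation $\sigma_{(123)}$ acting on $T\otimes T\otimes T$ to be homotopic to the identity compatibly with higher coherences. This is exactly the symmetry hypothesis. Concretely, I would use the symmetry datum to build an $E_\infty$-algebra structure on the telescope object presenting $\Stab_T(\Cc)$ (along the lines of the symmetric spectra construction), or equivalently invoke Robalo's refinement of a theorem of Voevodsky asserting that for a symmetric object the naive telescope inherits a presentable symmetric monoidal structure. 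Once this structure is in place, it is straightforward to check that the canonical symmetric monoidal functor $\Cc\to\Stab_T(\Cc)$ inverts $T$ and is initial among such, hence agrees with $\Cc\to\Cc[T^{\otimes-1}]$ by the universal property established in the first step, completing the identification.
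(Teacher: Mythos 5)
The paper does not actually prove this theorem: it is stated as a recollection from Robalo's work, which is cited for the argument. Your second step — identifying the underlying $\infty$-category of $\Cc[T^{\otimes -1}]$ with the telescope when $T$ is symmetric, computing the sequential colimit in $\Pr^L$ as a limit of right adjoints, and isolating the cyclic permutation on $T^{\otimes 3}$ as the first coherence obstruction to descending the monoidal structure to the colimit — is a faithful outline of Robalo's argument, i.e.\ of exactly what the paper relies on.

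Your first step, however, contains a genuine error. The universal presentable symmetric monoidal $\infty$-category in which $T$ becomes $\otimes$-invertible is \emph{not} a reflective (Bousfield) localization of $\Cc$ at a class of morphisms of $\Cc$. First, there is no given map $u:\tunit\to T$ — the object $T$ is arbitrary, not pointed — so the ``cup-product'' maps $T\otimes X\to X$ you propose to invert do not exist. Second, the invariant substitute, inverting all $f$ with $T\otimes f$ an equivalence, has the wrong universal property: making such maps invertible is a \emph{consequence} of inverting $T$, not equivalent to it, and any localization of this kind is by construction a full subcategory of $\Cc$ on which $T\otimes -$ still need not be an equivalence. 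The standard example rules the approach out: for $\Cc=\Sc_*$ and $T=S^1$ one has $\Cc[T^{\otimes -1}]\simeq \Sp$, and $\Omega^\infty$ is not fully faithful, so $\Sp$ is not a reflective subcategory of $\Sc_*$; the construction must enlarge $\Cc$ by formal desuspensions rather than cut it down. The correct existence argument works one categorical level up: the full subcategory of $\mathrm{CAlg}(\Pr^L)_{\Cc/}$ spanned by the $\Cc$-algebras in which the image of $T$ is invertible is accessible and closed under limits, hence reflective, and $\Cc[T^{\otimes -1}]$ is the reflection of the initial object; the stated mapping-space equivalence is then formal. With that replacement for your first step, the rest of your outline goes through.
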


We denote $(\Sc,\times)$ the symmetric monoidal $\infty$-category of spaces,
obtained as the $\infty$-localization of the monoidal category $(\SSets,\times)$ of
simplicial sets by weak equivalences. The symmetric monoidal $\infty$-category of pointed spaces
with the wedge product is denoted $(\Sc_*,\wedge)$. The symmetric monoidal $\infty$-category
of spectra is obtained by
$$(\Sp,\wedge):=((\Sc_*,\wedge)[(S^1)^{\otimes -1}],\otimes).$$ 
Since $S^1$ is symmetric in $\Sc_*$, the underlying $\infty$-category of $\Sp$ may
be described as the stabilization of $\Sc_*$ with respect to the wedge product by $S^1$.

\begin{definition}
An object $X$ of a stable $\infty$-category $\Mfk$ is called \emph{homotopically compact}
if for all $n$, the functor $\Hom_{h(\Mfk)}(X,-[n])$ commutes to small filtered colimits.
\end{definition}

We essentially give here an $\infty$-categorical analog of Ayoub's notion of coefficient
category from \cite{Ayoub-six-operations-II}, Definition 4.4.23.
\begin{definition}
Let $(\Mfk,\otimes)$ be a symmetric monoidal $\infty$-category.
We say that $(\Mfk,\otimes)$ is a \emph{category of coefficients} if
\begin{enumerate}
\item $\Mfk$ is stable and presentable,
\item there exists a set $\Ec$ of homotopically compact objects of $\Mfk$ that
generate the triangulated category with infinite sums $h(\Mfk)$.
\end{enumerate}
\end{definition}
By definition, a symmetric monoidal model category $(\Mfk,\otimes)$ that is
a category of coefficients in the sense of Ayoub loc. cit. will give a symmetric
monoidal $\infty$-category
$(\Mfk,\otimes)$ that is a coefficient category in the above sense. The model
category setting gives a better take at explicit computations, but we chose the
$\infty$-category setting because it sometimes allows easier universal constructions.

\begin{proposition}
Let $(X,\tau)$ be a small $\infty$-site and $(\Mfk,\otimes)$ be an $\infty$-category
of coefficients. Then the categories
$$\PreShv(X,\tau,\Mfk)\textrm{ and }\Shv(X,\tau,\Mfk)$$
of presheaves and sheaves on $(X,\tau)$ with values in $\Mfk$
are stable presentable symmetric monoidal $\infty$-categories. 
\end{proposition}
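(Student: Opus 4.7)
The plan is to build the required structure first on presheaves, where everything is pointwise, and then transfer it to sheaves by a compatible reflective localization.

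For $\PreShv(X,\tau,\Mfk)$, viewed as the $\infty$-category of functors $X^{op}\to \Mfk$, stability is automatic since a functor $\infty$-category into a stable $\infty$-category is itself stable (\cite{Lurie-higher-algebra}, Prop.~1.1.3.1), with biproducts, fibers and cofibers computed objectwise. Presentability follows from the fact that functors from a small $\infty$-category to a presentable $\infty$-category form a presentable $\infty$-category (\cite{Lurie-higher-topos-theory}, Prop.~5.5.3.6), with a set of generators obtained by combining Yoneda representables with the generators $\Ec$ of $\Mfk$. The pointwise tensor product $(F\otimes G)(U):=F(U)\otimes_\Mfk G(U)$ gives a symmetric monoidal structure; since $\otimes_\Mfk$ preserves colimits in each variable (a standard ingredient of the notion of presentable symmetric monoidal $\infty$-category), so does the pointwise tensor on presheaves, and the set $\{h_U\otimes E\mid U\in X,\;E\in\Ec\}$ is a set of homotopically compact generators.

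Next, $\Shv(X,\tau,\Mfk)\subset \PreShv(X,\tau,\Mfk)$ is a reflective accessible subcategory via a left-exact sheafification functor $L$, so presentability passes to sheaves by \cite{Lurie-higher-topos-theory}, Prop.~5.5.4.15. Stability transfers as well, because $L$ preserves finite limits and finite colimits (being a left adjoint whose right adjoint is fully faithful and commutes with finite limits); hence fibre and cofibre sequences in $\Shv$ are computed in $\PreShv$, and the property ``every fibre sequence is a cofibre sequence'' is inherited.

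The main obstacle, and the only genuinely non-formal step, is to descend the symmetric monoidal structure to sheaves. For this I would invoke \cite{Lurie-higher-algebra}, Prop.~4.1.7.4, which yields a unique symmetric monoidal structure on the localization making $L$ symmetric monoidal, provided the class $W$ of $\tau$-local equivalences is compatible with $\otimes$, in the sense that $f\otimes \id_A\in W$ whenever $f\in W$ and $A\in\PreShv(X,\tau,\Mfk)$. The class $W$ is strongly saturated and is generated, as a strongly saturated class stable under colimits, by the inclusions $S\hookrightarrow h_U$ of covering sieves (tensored with the generators in $\Ec$). Since $-\otimes A$ preserves colimits in each variable, it preserves strongly saturated classes generated by any set of morphisms that it stabilises, so it suffices to verify the condition on these generators. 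Tensoring $S\hookrightarrow h_U$ with $A$ (itself a colimit of objects $h_V\otimes E$) produces a colimit of morphisms of the same generating type, which again lies in $W$. This yields the desired symmetric monoidal structure on $\Shv(X,\tau,\Mfk)$, completing the proof.
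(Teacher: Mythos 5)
Your proof is correct in outline, but it takes a genuinely different route from the paper's. The paper's argument is a one-line reduction: it writes $\PreShv(X,\tau,\Mfk)\simeq\PreShv(X,\tau,\Sp)\otimes_\Sp\Mfk$ (and similarly for sheaves) using the Lurie tensor product of presentable $\infty$-categories, and then appeals to the known case of spectral (pre)sheaves, obtained by stabilizing $\SSets$-valued presheaves. That approach buys brevity and makes the base-change behaviour in the coefficient category $\Mfk$ transparent, at the cost of leaving the monoidal structure somewhat implicit and of resting on the (nontrivial) identification of the tensor product of presentables with a functor category. Your approach builds everything by hand: pointwise stability and presentability of the functor category, the pointwise tensor product, and then descent of all three structures along the sheafification localization by checking compatibility of the $\tau$-local equivalences with $\otimes$ on a generating set of the strongly saturated class. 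This is longer but more self-contained and actually exhibits the monoidal structure being used. Two small points where your justifications should be tightened rather than replaced: (i) the parenthetical reason you give for $L$ preserving finite limits (``left adjoint whose right adjoint is fully faithful and commutes with finite limits'') is not by itself a proof of left-exactness --- the correct justification in the stable setting is that the class $W$ of local equivalences is closed under shifts, so the localization is again stable and $L$ is exact (alternatively, that sheafification at a Grothendieck topology is a topological, hence left-exact, localization); (ii) when you tensor a covering-sieve inclusion $S\hookrightarrow h_U$ with a representable generator, the result is $S\times h_V\to h_U\times h_V$ (smashed with an object of $\Ec$), which is a local equivalence by pullback-stability of covering sieves rather than literally ``of the same generating type''. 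Neither point affects the validity of the argument.
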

\begin{proof}
See \cite{Robalo} for a closely related result. This follows from the fact
(explained to us by Brad Drew) that one may write
$$\PreShv(X,\tau,\Mfk)=\PreShv(X,\tau,\Sp)\otimes_\Sp\Mfk$$
and similarly for sheaves. The fact that $\PreShv(X,\tau,\Sp)$ is
stable presentable and symmetric monoidal is already known, because
it may be obtained by stabilizing presheaves with values in $\SSets$,
that are presentable.
\end{proof}

We refer to Robalo \cite{Robalo}, Section 5 for the following.
\begin{definition}
Let $(X,\tau)$ be a small $\infty$-site, $(\Mfk,\otimes)$ be a coefficient $\infty$-category,
and $I=\{I_s\}_{s\in S}\in \Shv(X,\tau,\Mfk)$ be a family of objects.
\begin{enumerate}
\item The associated \emph{unstable homotopy category}
is the $\infty$-localization
$$\H(X,\tau,I,\Mfk)=L_I(\Shv(X,\tau,\Mfk))$$
of the $\infty$-category of sheaves with respect to the class
of morphisms $X\times I_s\to X$.
\item The \emph{pointed unstable homotopy category}
is the associated pointed  symmetric monoidal $\infty$-category $\H(X,\tau,I,\Mfk)_*$.
\item If $T$ is a symmetric object in $\H(X,\tau,I,\Mfk)_*$, we define the associated
\emph{stable homotopy category} as the universal presentable symmetric monoidal
$\infty$-category in which $T$ becomes $\otimes$-invertible:
$$\SH(X,\tau,I,T,\Mfk):=\H(X,\tau,I,\Mfk)_*[T^{\otimes -1}].$$
\end{enumerate}
\end{definition}

The underlying $\infty$-category of $\SH(X,\tau,I,\Mfk)$ is equivalent to the $T$-stabilization
of $\H(X,I,\Mfk)_*$, which is given by the $\infty$-categorical colimit
of the sequence
$$
\cdots\overset{T\otimes -}{\longrightarrow}\H(X,I,\Mfk)_*
\overset{T\otimes -}{\longrightarrow}\H(X,I,\Mfk)_*
\overset{T\otimes -}{\longrightarrow}\cdots
$$

\subsection{Analytic motives and spectra}
Let $t\in \{an,\{an,vs\},\{an,s\},\dagger,\{\dagger,vs\},\{\dagger,s\}\}$ be a type of analytic spaces.
We now apply the constructions of the previous section to
the category of smooth $t$-analytic spaces with its
\'etale, Nisnevich and pro-\'etale topologies. We follow quite closely the
approach of Ayoub in the complex \cite{Ayoub-Betti} and $p$-adic
analytic \cite{Ayoub-analytic-motives} situations. We introduce two
types of non-strict motives: those obtained by contracting the unit disc $\Db^1$,
and those obtained by contracting the family of all discs $\{\Db(0,\rho)\}_{\rho \in \R_{>0}}$.

Let $R$ be an ind-Banach ring and $X$ be a $t$-analytic space over $R$.
The category $\AnSm^t_X$ of smooth $t$-spaces over $X$ is small.
It will be equipped with a topology $\tau$ that
is either the \'etale topology $\tau_{et}$, the Nisnevich topology $\tau_{Nis}$ or
the pro-\'etale topology $\tau_{proet}$.
We fix an $\infty$-category $(\Mfk,\otimes)$ of coefficients.
We will denote $T$ the object in $\Shv(\AnSm^t_X,\tau,\Mfk)$ given by
$$T=\cof(\G_{m,X}\otimes \1\to \A^1_{X}\otimes \1).$$
The proof of Ayoub that $T$ is symmetric in the algebraic
setting in \cite{Ayoub-six-operations-II}, Lemme 4.5.65, being based on
elementary integer valued matrix computations, extends directly to the strict
and non-strict overconvergent setting.

\begin{definition}
The $\tau$-stable homotopy category $\SH_{\Mfk}^t(X,\tau)$ with coefficients in $\Mfk$ is
defined by
$$\SH_{\Mfk}^t(X,\tau):=\SH(\AnSm^t_X,\tau,D^t_X(0,1)\otimes \1,T,\Mfk).$$
If $t\in \{an,\dagger\}$ is a non-strict type of analytic spaces,
we also define the $\tau$-stable non-strict homotopy
category $\SH_{\Mfk}^{t,ns}(X,\tau)$ with coefficients in $\Mfk$ by
$$\SH_{\Mfk}^{t,ns}(X,\tau):=\SH(\AnSm^t_X,\tau,\{D^t(0,\rho)\otimes \1\}_{\rho\in \R{>0}},T,\Mfk).$$
If $(\Mfk,\otimes)=(\Sp,\wedge)$ is the symmetric monoidal $\infty$-category of spectra,
we will denote
$\SH_\Mfk^t(X,\tau)$ (resp. $\SH_\Mfk^{t,ns}(X,\tau)$) simply by $\SH^t(X,\tau)$
(resp. $\SH^{t,ns}(X,\tau)$).
If $\Lambda$ is a commutative ring and $(\Mfk,\otimes)=(\Mod_{dg}(\Lambda),\otimes)$,
the $\infty$-category
$$
\begin{array}{c}
\DA_\tau^t(X,\Lambda)=\SH_\Mfk^t(X,\tau)\\
\textrm{(resp. }\DA_\tau^{t,ns}(X,\Lambda)=\SH_\Mfk^{t,ns}(X,\tau)\textrm{)}
\end{array}
$$
is called the category of $\tau$-motivic (resp. non-strict $\tau$-motivic) sheaves
with coefficients in $\Lambda$.
More generally, if $\Lambda$ is a sheaf of rings for the given topology $\tau$,
we will still denote
$$
\DA_\tau^t(X,\Lambda):=
\DA_\tau^t(X,\Z)\otimes_{\Shv(X,\tau,\Mod_{dg}(\Z_X))}\Shv(X,\tau,\Mod_{dg}(\Lambda))
$$
the associated category of $\tau$-motivic sheaves with coefficients in $\Lambda$
(and similarly for $\DA_\tau^{t,ns}(X,\Lambda)$).
\end{definition}

The notation of the above definition are consistent, because if $\Lambda_X$ is a
constant sheaf of rings with values $\Lambda$, we will have a canonical equivalence
$$
\SH_{\Mod_{dg}(\Lambda)}^t(X,\tau)\cong
\SH_{\Mod_{dg}(\Z)}^t(X,\tau)\otimes_{\Shv(X,\tau,\Mod_{dg}(\Z_X))}
\Shv(X,\tau,\Mod_{dg}(\Lambda_X)).
$$

It is natural, following what we said in Remark \ref{strict-fully-faithful}, to ask if the natural functor
$$\SH_\Mfk^{\dagger,s}(X,\tau)\to \SH_\Mfk^\dagger(X,\tau)$$
is fully faithful.
This question seems to have a positive answer over $\Cbf$ (where it is
even an equivalence), and may
also have a positive answer on a non-archimedean field $K$, if one
can adapt the work of Temkin \cite{Temkin-local-properties-II}.
If we work over $(\Z,|\cdot|_\infty)$, this adaptation does not seem
to be so easy, but the question remains interesting: this would
relate usual algebraic motives to global analytic motives, which
are still quite rigid objects.

\begin{remark}
Let $R$ be a Banach ring, $\Mfk$ be a category of coefficients
and $t\in \{an,\dagger\}$ be a non-strict type of analytic spaces
and $X$ be a $t$-analytic space.
The multiplication map
$$D(0,1)\times D(0,\rho)\to D(0,\rho)$$
shows that $D(0,\rho)$ is $D(0,1)$-contractible, so that there is a natural functor
$$\SH^{t,ns}_\Mfk(X,\tau)\to \SH^{t}_\Mfk(X,\tau).$$
There is also of course a natural localization functor
$$\SH^{t}_\Mfk(X,\tau)\to \SH^{t,ns}_\Mfk(X,\tau).$$
\end{remark}

\begin{remark}
It is quite natural to try to extend Ayoub's formulation of the six operation
formalism from \cite{Ayoub-six-operations-II}
(partially extended to the $\infty$-categorical setting by Robalo \cite{Robalo-these})
to our more general setting. Ayoub's papers \cite{Ayoub-analytic-motives} and
\cite{Ayoub-Betti} show us that there is no essential obstructions to this possibility.
We will use this idea in some of our discussions.
\end{remark}

\begin{example}
Let $X$ be a scheme, seen as a strict analytic space over $\Zbf_0:=(\Z,|\cdot|_0)$.
Then the stable homotopy categories $\SH_\Mfk(X,\tau)$ for $\tau=\tau_{et}$ and
$\tau=\tau_{Nis}$ give back
the usual \'etale and Nisnevich stable homotopy categories. This will be useful to
get various strict non-archimedean analytifications over $\Qbf_p$ and $\Zbf_p$
for (say) projective schemes by a mere base change.
One must not forget however, that the base extension of
$\A^1_\Z$, seen as the strict analytic space over $\Zbf_0$ given by the unit disc,
only give the unit disc on $\Q_p$ and $\Z_p$, and not the affine line.
\end{example}

\begin{example}
If the base Banach ring $\Qbf_p=(\Q_p,|\cdot|_p)$ is seen as a strict analytic
algebra over itself, the category $\SH_\Mfk^s(\Qbf_p,\tau_{Nis})$ gives
back Ayoub's category $\RigSH_\Mfk(\Q_p)$ of rigid analytic motives over $\Q_p$ with
coefficients in $\Mfk$. We will use the overconvergent analog, because it carries
a natural de Rham realization functor (because of homotopy invariance of de Rham
cohomology, that is only valid in the overconvergent setting).
We may also work with smooth perfectoid spaces over the completion of $\Q_p(p^{1/p^{\infty}})$,
seen as analytic spaces over this field. Using Nisnevich coverings, we find a
perfectoid version of Ayoub's rigid analytic motives (see
Vezzani's article \cite{Vezzani-perfectoid} for
a description of the tilt operation in the setting of rigid analytic motives).
If we work over the Banach ring
$\Zbf_p=(\Z_p,|\cdot|_p)$ and $X$ is a strict dagger analytic space over
$\Zbf_p$, we find a homotopy category
$\SH_\Mfk^{\dagger,s}(X,\tau_{Nis})$ that is an overconvergent
(sometimes called ``weakly convergent'' in the litterature) analog
of Ayoub's homotopy category $\RigSH_\Mfk(Y)$ over a $\Zbf_p$-rigid scheme $Y$
(see \cite{Ayoub-analytic-motives}). It may (or may not) be possible to represent syntomic
cohomology in this new category following closely the approach of Deglise and
Mazzari \cite{Deglise-Mazzari}.
\end{example}

\begin{example}
\label{complex-analytic-dagger}
Let $X$ be a complex analytic space. This is also a global analytic space
over $\Cbf$ in the sense of Berkovich (see \cite{Poineau2}), which has,
by Remark \ref{global-analytic-dagger}, a naturally associated
dagger analytic space $X^\dagger$ over $\Cbf$.
There is a natural functor
$$\SH_\Mfk^{Ayoub}(X,\tau_{usu})\to \SH_{\Mfk}^\dagger(X^\dagger,\tau_{et})$$
from the complex analytic stable homotopy category over $X$ with the usual topology
and with coefficients in $\Mfk$ (in the sense of Ayoub \cite{Ayoub-Betti}) to the stable
homotopy category over $X^\dagger$ with coefficients in $\Mfk$.
It is likely a fully faithful functor. It may even be an equivalence.
In any case, the same methods as those of Ayoub in loc. cit. should allow
to prove that a convenient version of $\SH_\Mfk^\dagger(X^\dagger,\tau_{et})$
is equivalent to the $\infty$-category $\Shv(X,\tau_{usu},\Mfk)$ of sheaves
on $X$ for the usual topology with values in $\Mfk$. In particular, if $X$ is
a point, we should get back the usual homotopy theory given by $\Mfk$.
If the base for $X$ is $\Rbf:=(\R,|\cdot|_\infty)$, then we should get an identification
of $\SH_\Mfk^\dagger(X^\dagger,\tau_{et})$ with the $\infty$-category
$\Shv([X(\C)/\sigma],\tau_{usu},\Mfk)$ where $\sigma$ is complex conjugation.
Indeed, the analytic etale $\infty$-topos of $X$ is identified with the quotient topos
$[X(\C)/\sigma]$.
\end{example}

\begin{example}
Suppose that a given scheme over $\Z$ may be seen as a the extension
of a strict dagger analytic space over $\Zbf=(\Z,|\cdot|_\infty)$. The associated stable
homotopy categories $\SH_\Mfk^{\dagger,s}(X,\tau_{Nis})$ give a category of (strict) motives
over $\Zbf$ that has a natural analytification by base change to $\Cbf=(\C,|\cdot|_\C)$
that is very close to usual homotopy theory of $\Mfk$-valued sheaves (by
Example~\ref{complex-analytic-dagger}),
and also natural non-archimedean analytifications over $\Qbf_p$ and $\Zbf_p$
that are close to Ayoub's rigid analytic motives. It is likely that the Artin-Verdier
\'etale cohomology theory can be represented as the \'etale cohomology spectrum
in the stable homotopy category $\SH^{s,\dagger}(\Zbf,\tau_{et})$ or
$\SH^\dagger(\Zbf,\tau_{et})$.
Remark that the base extension along $\Zbf\to \Zbf_0:=(\Z,|\cdot|_0)$ gives a
pullback functor
$$\DA_{et}^{\dagger,s}(\Zbf,\Z)\to \DA_{et}^{\dagger,s}(\Zbf_0,\Z)=\DA_{et}^{alg}(\Z,\Z)$$
that sends a strict global analytic motive to the associated algebraic motive.
The base extension along $\Zbf\to \Cbf=(\C,|\cdot|_\infty)$ gives a functor
$$\DA_{et}^{\dagger,s}(\Zbf,\Z)\to \DA_{et}^{\dagger,s}(\Cbf,\Z)$$
that is a strict version of the Betti realization functor. It seems clear
from the previous examples that the complex base extension looses
a lot of information, since motivic cohomology in this setting is
essentially Betti cohomology. However, this is less clear in the global
analytic setting, because the global motive contains information about
all places, and even the underlying algebraic motive, obtained by
base extension to $(\Z,|\cdot|_0)$. This global analytic strict \'etale motivic cohomology
may thus be an interesting new invariant, that we will call the \emph{Artin-Verdier
motivic cohomology}. We denote $\Hbf_{et,\Z}^{av}$ the associated spectrum,
constructed by using Ayoub's theory of six operations in our setting.
\end{example}

\begin{remark}
\label{Arakelov-motivic}
It is quite tempting to define (an \'etale Artin-Verdier version of) Arakelov motivic
cohomology (defined by Holmstrom and Scholbach in \cite{Arakelov-motivic-cohomology-I})
with coefficients in $\R$ by using the various Grothendieck operations that may
be available on the motivic categories. By definition, real Beilinson-Deligne cohomology
is representable by a spectrum $\Hbf_{BD}$ in
$$\SH_{et}^{\dagger,s}(\Zbf_0,\Z)=\SH_{et}^{alg}(\Z,\Z),$$
where $\Zbf_0=(\Z,|\cdot|_0)$. The same is true for the \'etale motivic cohomology spectrum
$\Hbf_{mot,et,\Z}$. Remark that one can't hope to represent Beilinson-Deligne cohomology in
the non-strict analytic category because its construction is based on smooth
compactifications with boundary given by a normal crossing divisor, which are not
available in general in the analytic setting. The Beilinson-Deligne component $\Hbf_{BD}$ should
however be more naturally explained by a nice spectrum in the category of strict global
analytic motives $\SH_{et}^{\dagger,s}(\Zbf,\Z)$ with $\Zbf=(\Z,|\cdot|_\infty)$.
Indeed, it is related to the Betti
realization which is naturally available only over the global base $\Zbf$.
We may work with the global Artin-Verdier analog $\Hbf_\Z^{av}$ over $\Zbf$ of $\Hbf_{mot,et,\Z}$.
There is a natural morphism
$$
\xymatrix{
\Hbf_{mot,\R}\ar[rr]^(0.4){\id\wedge 1_{\Hbf_{BD}}} && \Hbf_{mot,\R}\wedge \Hbf_{BD}}
$$
and Holmstrom and Scholbach define the Arakelov-motivic cohomology spectrum $\hat{\Hbf}$
as the homotopy fiber of this morphism (recall the motivic \'etale cohomology with coefficients
in $\Q$ identifies with usual motivic cohomology).
One may try to extend, The interpretation of special values of $L$-functions
as proposed by Scholbach in his thesis \cite{Scholbach-special-L-values}
(with probably some additional truncational cares) to the study
of special values up to a factor in $\Z^\times$:
the determinant of the pairing
$$H_{*}^{mot,et}(X,\Z)\times \hat{H}^*(X,\R)\to \R$$
between real motivic Arakelov cohomology and integral motivic homology with
values in $\R$ (in the case $X/\Z$ smooth projective)
may give the special value up to a factor in $\{\pm 1\}$ (an argument against
this idea, explained to the author by Baptiste Morin, is that the use of \'etale motives
destroys the $p$-torsion information in characteristic $p$).
This comes from the fact, explained to the author by Jakob Scholbach, that there are
natural isomorphisms:
$$
\begin{array}{ccc}
\det (\hat{H}^*(X,\R))	& =	& 	\det(H^*(X, \R))\otimes_\R \det(H^*_{BD}(X,\R))\\
					& =	&  	\det(H^*_{mot,et}(X,\Z))\otimes_\Z\R\\
					&	&	\otimes_\R\\
					&	&	\det(H^*_{B}(X_\R,\Z))\otimes_\Z\R\\
					&	&	\otimes_\R\\
					&	&	\det^{-1}(H^*_{dR, fil}(X/\Z))\otimes_\Z\R
\end{array}
$$
where $H^*_B(X,\Z)$ is the usual Betti cohomology.
Working with a smooth log-scheme over $\Z$ would treat the semistable case.
Remark that the integral structure on the Deligne cohomology determinant is
---not--- given by integral Deligne cohomology (that is a locally compact group;
this may have relations to Morin's global arithmetic cohomology, however),
but by a combination of Betti cohomology with filtered absolute de Rham cohomology.
To give a global analytic interpretation of Scholbach's constructions, here is how
we proceed: We interpret the factor
$$\det(H^*_{mot,et}(X,\Z))\otimes_\Z\det(H^*_{B}(X_\R,\Z))$$
in his determinant as the determinant of the Artin-Verdier \'etale motivic
cohomology $H^*_{et,av}(X,\Z)$ (which contains both integral \'etale motivic information
and Betti information; maybe a $p$-part information should be added following
Milne and Ramachandran \cite{Milne-Ramachandran}; the Weil-\'etale motivic
cohomology would even be better) and the factor
$$
\textstyle\det^{-1}(H^*_{dR, fil}(X/\Z))
$$
as the determinant of filtered de Rham cohomology over $(\Z,|\cdot|_0)$. We conjecture that
the base extension
$$(\Z,|\cdot|_\infty)\to (\Z,|\cdot|_0)$$
of an algebraic motive does not change its filtered de Rham cohomology,
so that we may interpret $H^*_{dR, fil}(X/\Z)$ as the de Rham cohomology
of $X$ over $(\Z,|\cdot|_\infty)$. We refer to subsection \ref{logarithmic-Arakelov}
for a discussion of the problem of finding a model over $(\Z,|\cdot|_\infty)$
of a scheme over $\Z$, seen as a strict dagger space over $(\Z,|\cdot|_0)$.
This allows us to seek for the definition
of a regulator from Artin-Verdier motivic cohomology to filtered de Rham
cohomology over $(\Z,|\cdot|_\infty)$, given by a filtered de Rham realization functor over this
global analytic base.
The fiber of this map of spectra gives back the integral structure on Arakelov motivic cohomology,
and the pairing between motivic homology over
$$U=\{|2|\leq |1|\}\subset X=\Mc(\Z,|\cdot|_\infty)$$
and Arakelov motivic cohomology may be defined in a natural way.
See Remark \ref{Arakelov-motivic-cyclic} for a possible construction of
a global version of Arakelov-motivic cohomology.
\end{remark}

\subsection{Realizations}
Let $t\in \{an,\{an,s\},\dagger,\{\dagger,s\}\}$ be a type of analytic spaces.
Let $\Lambda$ be a sheaf of rings for the pro-\'etale topology on
$\AnSm^t_X$. As before, we denote
$$\eta:\AnSm^\dagger_{X,proet}\to \AnSm^\dagger_{X,et}.$$
\begin{definition}
The \'etale realization of integral motives with coefficients in $\Lambda$ is given
by the composition
$$
\DA_{et}^\dagger(X,\Z)\overset{\eta^*}{\longrightarrow} \DA_{proet}^\dagger(X,\Z)\to
\DA_{proet}^\dagger(X,\Lambda).
$$
\end{definition}
In some particular torsion cases like for example $\Lambda=\Z/n\Z$,
it is possible to show that $\DA_{et}(X,\Lambda)$
is equivalent to the $\infty$-category of sheaves of $\Lambda$-modules
$\Shv(X,\tau_{et},\Mod_{dg}(\Lambda))$.
This is the approach used by Ayoub in \cite{Ayoub-realisation-etale} to
define the \'etale realization.
This may extend nicely to the pro-\'etale situation with coefficients
in pro-\'etale sheaves like $\Zbf_\ell$ or $\hat{\Zbf}$.

Recall that any scheme over $\Z$ may be seen as a non-strict dagger space
over $(\Z,|\cdot|_\infty)$. We will now define a Betti realization for these
objects.
\begin{definition}
Let $X$ be a non-strict dagger analytic space over $\Zbf=(\Z,|\cdot|_\infty)$. The
Betti realization with coefficients in a coefficient category $\Mfk$ is given by
the base extension
$$\SH_{\Mfk}^\dagger(X,\tau)\to \SH_{\Mfk}^\dagger(X_\Rbf,\tau).$$
\end{definition}
The fact that the above definition is reasonable follows from what we said
in Remark \ref{complex-analytic-dagger}: Ayoub's methods
allow us to show that there is a natural equivalence
$$
\Shv(X_\Rbf^{ber},\tau_{usu},\Mfk)\overset{\sim}{\longrightarrow} \SH_{\Mfk}^\dagger(X_\Rbf,\tau),
$$
where $X_\Rbf^{ber}$ is the Berkovich space associated to $X_\Rbf$.

\begin{remark}
If $X$ is a strict dagger analytic space over $\Zbf$, then the diagram
$$
\xymatrix{
\SH_{\Mfk}^{\dagger,s}(X,\tau)\ar[r]\ar[d]	& \SH_{\Mfk}^{\dagger}(X,\tau)\ar[d]\\
\SH_{\Mfk}^{\dagger,s}(X_\Rbf,\tau)\ar[r]^\sim	& \SH_{\Mfk}^{\dagger}(X_\Rbf,\tau)}
$$
is commutative and the down horizontal arrow is an equivalence (this
last fact follows from Proposition \ref{complex-strict-non-strict}).
This means that we may see the Betti realization of a strict dagger motive
over $X$ as a strict dagger motive over $X_\Rbf$.
\end{remark}

The main interest of the theory of overconvergent analytic spaces
is that they have a nice de Rham cohomology theory, as was
already shown by Gro{\ss}e-Kl\"onne in \cite{Grosse-Kloenne2}.
The main point here is that the Poincar\'e Lemma is valid with overconvergent functions
if we work over a base that contains $\Q$.

We will now define the de Rham realization of dagger motives over
a given base by using the associated sheaves on the de Rham
space.

Let $R$ be a base ind-Banach ring and $X$ be a dagger analytic space
over $R$. 
\begin{definition}
The de Rham space of $X$ is the presheaf on $\Alg^\dagger_R$
given by
$$X(A):=X(A/I)$$
where $I\subset \Alg(A)$ is the nilradical. 
A sheaf of quasi-coherent modules on $X_{dR}$ is called a cristal on $X$.
\end{definition}

The de Rham space is functorial in presheaves, so that it is in
particular functorial in morphisms $f:Y\to X$ of analytic spaces.

Let $X$ be a dagger analytic space that is flat over $\Mb(\Z,|\cdot|_\infty)$.
The category of de Rham coefficients on $X$ is the category
$\Shv(X_{dR},\Mod_{dg}(\Oc_{X_{dR}}\otimes_\Z \Q))$.
\begin{theorem}
There are natural realization functors
$$\SH^\dagger(X,\tau)\to \Shv(X_{dR},\Mod_{dg}(\Oc_{X_{dR}}\otimes_\Z \Q))$$
and for $\Lambda\subset \Q$,
$$\DA^\dagger_\tau(X,\Lambda)\to \Shv(X_{dR},\Mod_{dg}(\Oc_{X_{dR}}\otimes_\Z \Q)).$$
\end{theorem}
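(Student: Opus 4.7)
The strategy is to apply the universal property of the stable homotopy category $\SH^\dagger(X,\tau)$: to construct a symmetric monoidal colimit-preserving functor out of $\SH^\dagger(X,\tau)$ it suffices to produce a presheaf $\rho:\AnSm^\dagger_X\to \Shv(X_{dR},\Mod_{dg}(\Oc_{X_{dR}}\otimes_\Z\Q))$ which (i) satisfies descent for $\tau$, (ii) is $\Db^1$-local (and, for $\SH^\dagger$ but not $\SH^{\dagger,ns}$, $\Db(0,\rho)$-local for all $\rho$), and (iii) sends the Tate object $T=\cof(\G_{m,X}\otimes \1\to \A^1_X\otimes \1)$ to a $\otimes$-invertible object in the target, this latter target being itself a presentable symmetric monoidal stable $\infty$-category.

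The functor $\rho$ is defined on a smooth morphism $f:Y\to X$ by sending it to the pushforward along the canonical map $f_{dR}:Y_{dR}\to X_{dR}$ of the structure crystal, i.e.\ $\rho(Y/X):=Rf_{dR,*}(\Oc_{Y_{dR}}\otimes_\Z\Q)$. Concretely, using that a crystal on $Y$ corresponds to a quasi-coherent module on $Y_{dR}$, this is the relative overconvergent de Rham complex $Rf_*\Omega^\bullet_{Y/X,\dagger}\otimes_\Z\Q$ viewed as an object of $\Shv(X_{dR},\Mod_{dg}(\Oc_{X_{dR}}\otimes_\Z\Q))$ via Grothendieck's dictionary (infinitesimal thickenings of affinoid opens of $X$ pull back to infinitesimal thickenings of $Y$). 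The functoriality in $Y/X$ is by standard base change for the de Rham space.

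One then checks the three properties. For (i), étale (and a fortiori Nisnevich) descent for the de Rham complex of smooth overconvergent dagger spaces follows from faithfully flat descent on the overconvergent structure sheaf combined with the fact that relative differentials commute with étale base change; pro-étale descent reduces formally to étale descent by passage to filtered limits along pro-étale presentations, using that $\Mod_{dg}(\Oc_{X_{dR}}\otimes\Q)$ has filtered colimits. For (ii), the $\Db^1$-invariance is exactly the overconvergent Poincaré Lemma of Gro{\ss}e-Kl\"onne \cite{Grosse-Kloenne2}, which is valid because we have tensored with $\Q$ and are working with overconvergent power series; for the non-strict variant one needs the analogous statement for arbitrary radii, which follows from the same Poincaré Lemma applied fiberwise. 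For (iii), a direct computation on the cofibration $\G_m\to \A^1$ (the overconvergent $\G_m$ has de Rham cohomology $\Oc_{X_{dR}}\otimes\Q \oplus \Oc_{X_{dR}}\otimes\Q[-1]\cdot\dlog t$) identifies $\rho(T)$ with $\Oc_{X_{dR}}\otimes\Q[-1]$ up to a shift, which is evidently $\otimes$-invertible. Thus by the universal property of $\SH$ recalled in the previous subsection, $\rho$ extends uniquely to a symmetric monoidal colimit-preserving functor $\SH^\dagger(X,\tau)\to \Shv(X_{dR},\Mod_{dg}(\Oc_{X_{dR}}\otimes\Q))$. The variant for $\DA^\dagger_\tau(X,\Lambda)$ with $\Lambda\subset\Q$ follows by restriction along the natural functor $\Mod_{dg}(\Lambda)\to \Mod_{dg}(\Q)$, using that the target is already $\Q$-linear.

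The main technical obstacle is the verification of $\Db^1$-invariance and Tate-invertibility: both ultimately rest on the overconvergent Poincaré Lemma, whose validity is the essential reason we work with dagger (rather than purely convergent) analytic spaces. A secondary issue is the careful formulation of $\rho$ as a functor into sheaves on $X_{dR}$ (rather than merely a complex of $\Oc_X$-modules), which requires checking the crystalline transitivity condition along infinitesimal thickenings; this amounts to the standard fact that the relative de Rham complex computes crystalline cohomology in characteristic zero, transposed to the overconvergent setting via the functorial dependence of $\Omega^\bullet_{Y/X,\dagger}$ on the dagger structure.
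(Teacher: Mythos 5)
Your proposal follows essentially the same route as the paper's proof: the realization is defined on smooth $Y\to X$ by the pushforward $(f_*\Oc_{Y_{dR}})\otimes\Q$ of the structure crystal, $\Db^1$-invariance is reduced via Künneth to the overconvergent Poincaré Lemma on the disc (valid precisely because of the dagger structure and the $\Q$-coefficients), and Tate-invertibility is checked on the cofiber $\cof(f_*\Oc^{dR}_{\G_{m,X}}\to f_*\Oc^{dR}_{\A^1_X})$, which is locally free of rank one and hence dualizable. Your additional attention to $\tau$-descent and to the $\Lambda\subset\Q$ variant fills in points the paper leaves implicit, but the underlying argument is the same.
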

\begin{proof}
The realization functor will extend the natural relative de Rham cohomology functor
$$
\begin{array}{ccc}
\AnSm^\dagger_{X,\tau}	& \to 	& \Shv(X_{dR},\Mod_{dg}(\Oc_{X_{dR}}\otimes_\Z \Q))\\
\left[Y\to X\right]				& \mapsto	& (f_*\Oc_{Y_{dR}})\otimes \Q
\end{array}
$$
We have to show that if $Y\to X$ is a smooth morphism, then there is a natural
homotopy equivalence
$$(f_*\Oc^{dR}_{Y\times D^\dagger(0,1)_X})\otimes \Q\cong (f_*\Oc^{dR}_Y)\otimes \Q.$$
Using the compatibility of the Kunneth formula with direct image, this will
follow from the computation of the de Rham cohomology of the disc (which works
only in the overconvergent setting), that may be done over the initial base
$(\Z,|\cdot|_\infty)$. Using the fact that we work with $\Q$-coefficients, we get
the Poincar\'e Lemma for overconvergent power series on the disc.
We also need to show that $(f_*\Oc^{dR}_T)\otimes \Q$ is $\otimes$-invertible
in $\Shv(X_{dR},\Mod_{dg}(\Oc_{X_{dR}}\otimes_\Z \Q))$. This follows from
the formula
$$
f_*\Oc^{dR}_T\cong
\cof(f_*\Oc^{dR}_{\G_{m,X}}\to f_*\Oc^{dR}_{\A^1_X}),
$$
that gives that $f_*\Oc^{dR}_T$ is locally free on $X$ of rank $1$, and thus
dualizable over $\Oc_X$. The dual will give the tensor inverse.
\end{proof}

\section{Derived dagger analytic geometry}
\label{derived-dagger-analytic-geometry}
We have defined dagger analytic spaces, by following
the usual method of synthetic geometry, explained in the introduction
of the book \cite{Fred-Towards-the-maths-of-QFT}: we started by improving
the category of rational domain dagger algebras by adding
natural solution spaces for ideals. This was done using
a ``functor of function'' viewpoint. We then used
``functors of points'' to define spaces.
These ideas are close to the ones used by Lawvere \cite{Lawvere-categorical-dynamics} in
synthetic differential geometry and Dubuc and
Taubin \cite{Dubuc-Taubin} in synthetic analytic geometry.
Our main motivation for using this ``synthetic'' approach, as opposed to the usual
approach to classical analytic geometry using locally ringed spaces, is that
it generalizes directly to the derived setting.
We are very much inspired by Lurie's approach to derived analytic geometry
from \cite{Lurie-DAG-V} and \cite{Lurie-DAG-IX} and by the related work in progress of
Mauro Porta on complex analytic derived geometry \cite{Porta-these}.
We refer to this last work for a complete
and neat description of the complex analytic derived theory, including a good theory of
modules. We will now extend the above categories of analytic spaces
to $\infty$-categories of derived analytic spaces. This can be done by
using homotopical functors of functions on categories $\RatAlg^t_R$ of
rational $t$-algebras, that will give derived analytic algebras,
and homotopical functors of points on them.

Before diving into the abstract theory, we will give some motivations for its development.

\subsection{Motivation: global periods}
\label{motivation-global-periods}
One of our main motivations for developing overconvergent global derived analytic geometry
comes from the work of Beilinson and Bhatt (see \cite{Beilinson-derived-de-Rham} and
\cite{Bhatt-derived-de-Rham}) on $p$-adic Hodge theory: they define a ring
of $p$-adic periods by the formula
$$A_{cris}:=\DR(\bar{\Z}_p/\Z_p)\hat{\otimes}\Z_p,$$
where $\DR$ denotes algebraic de Rham cohomology and the
completed tensor product means the homotopy colimit
$$A_{cris}:=\hocolim_n \DR(\bar{\Z}_p/\Z_p)\Lotimes_\Z\Z/p^n\Z.$$
Using Hodge-completed derived de Rham cohomology instead of derived de Rham
cohomology, one gets
$$A_{dR}:=\widehat{\DR}(\bar{\Z}_p/\Z_p)\hat{\otimes}\Z_p$$
and also
$$B^+_{dR}:=\widehat{\DR}(\bar{\Z}_p/\Z_p)\hat{\otimes}\Q_p.$$
Seeking for a geometric interpretation of these derived completed tensor product,
we may interpret $A_{cris}$ as the analytic derived de Rham cohomology of a natural
morphism of strict analytic spaces over $\Zbf_0:=(\Z,|\cdot|_0)$, given by
$$A_{cris}\cong \DR^{an}(\bar{\Z}_p\Lotimes_\Z\Z_p/\Z_p),$$
where $\Z_p$ denotes here the strict derived analytic ring over $\Zbf_0$ given
by $\holim_n \Z/p^n\Z$. It is natural to ask if this cohomology can be compared to the
derived overconvergent analytic de Rham cohomology
$$\DR^{an}(\bar{\Z}_p\Lotimes_{\Zbf_0}\Zbf_p/\Zbf_p)$$
where $\Zbf_p$ denotes the Banach ring $(\Z_p,|\cdot|_p)$ and
$\bar{\Z}_p\Lotimes_{\Zbf_0}\Zbf_p$ denotes the derived analytic ring
over $\Zbf_p$ obtained by extension of scalars
of the non-strict analytic algebra $\bar{\Zbf}_p$ over $(\Z,|\cdot|_0)$
along the bounded morphism $\Zbf_0=(\Z,|\cdot|_0)\to (\Z_p,|\cdot|_p)=\Zbf_p$ of Banach rings. 
Similarly, one would have
$$
B^+_{dR}\cong \widehat{\DR}^{an}(\bar{\Z}_p\Lotimes_{\Zbf_0}\Qbf_p/\Qbf_p).
$$
This interpretation may help to sheafify the construction in the spirit of
Scholze's work \cite{Scholze-p-adic-Hodge-theory} and to globalize it in the spirit
of Bhatt's paper loc. cit., Remark 11.10: one gets global analogs
$$A_{ccris}\cong \DR^{an}(\bar{\Z}\Lotimes_\Z\hat{\Z}/\hat{\Z})$$
and
$$B^+_{ddR}\cong \widehat{\DR}^{an}(\bar{\Z}\Lotimes_\Z\A_f/\A_f),$$
given by the extension of scalar of derived de Rham cohomology of $\bar{\Z}/\Z$
to the ring $\A_f$ of finite adeles, seen as an analytic ring over $(\Z,|\cdot|_0)$.
In our global analytic viewpoint, one may even define naturally, using the base
$\Zbf=(\Z,|\cdot|_\infty)$, a new period ring
$$B^+_{g,ddR}:=\widehat{\DR}^{an}(\bar{\Zbf}\Lotimes_\Zbf\A/\A)$$
that also takes care of the archimedean component
$$B_{\infty,ddR}^+:=\widehat{\DR}^{an}(\bar{\Zbf}\Lotimes_\Zbf\R/\R).$$
In the archimedean situation of Hodge theory, one usually only uses the Galois group
of $\C$ over $\R$, but knowing that a variety is defined over $\Z$ may be an important information to
be used in archimedean Hodge theory.
One may even study the groupoid derived stack
$$\R\Hom_{\A^1_\Z}(D_*,\A^1_{\bar{\Zbf}})\rightrightarrows \A^1_{\Zbf}$$
(where $D_*$ is the cosimplicial scheme that is given in degree $n$ by the union of
the $n+1$ coordinate axis in $\A^{n+1}$ and the face and degeneracies on $[n\mapsto \A^{n+1}]$
are given by addition of coordinates and insertion of zeroes)
that encodes (when derived pullbacked to $\A$ and completed along the unit section)
the derived Hodge filtration on $B^+_{g,ddR}$.
Its pullback at the archimedean place gives back the archimedean Hodge filtration of $\bar{\Zbf}$,
and its pullback on $\hat{\Z}$ gives back the Hodge filtration on the global period ring.

Now, if we want to adapt Beilinson's strategy from \cite{Beilinson-derived-de-Rham} in
this global case, we can proceed in the following way: define the sheaf $\Bc^+_{dR}$
of filtered dg-algebras on the $h$-topology on the category $\Var_{\bar{\Q}}$ of quasi-projective
varieties by sheafifying for the $h$-topology the presheaf that sends a semistable pair
$(U,\bar{U})$ over $\bar{\Z}$ (with $\bar{U}$ projective over $\bar{\Z}$) to the
Hodge completed analytic derived de Rham cohomology
$$B^+_{dR}(U,\bar{U}):=\widehat{\DR}^{an}((\bar{U},\Mc_U)\Lotimes_\Zbf\A/\A)$$
of the corresponding strict dagger logarithmic space over $\Zbf$
(defined in Subsection \ref{logarithmic-Arakelov}), extended to $\A$.
One then defines the global Arithmetic de Rham complex of $X$ as
$$\R\Gamma^+_{dR}(X):=\R\Gamma(X_h,\Bc^+_{dR}).$$
There is a natural diagram
$$
H^*_{dR}(X)\overset{\alpha}{\longrightarrow}
H^*(\R\Gamma^+_{dR}(X))\overset{\beta}{\longleftarrow}
H^*_{proet}(X,\A)\otimes_\A B^+_{g,ddR}.
$$
Now the global analog of the Poincar\'e Lemma would be that the natural morphism
of sheaves on the $h$-topology
$$
B^+_{g,ddR}\overset{\sim}{\longrightarrow} \Bc^+_{dR}
$$
is a filtered quasi-isomorphism.
This would imply that $\beta$ is an isomorphism, and the
base extension to $B_{g,ddR}$ of the corresponding morphism
$$
H^*_{dR}(X)\to H^*_{proet}(X,\A)\otimes_\A B^+_{g,ddR}
$$
would then be the global period isomorphism.

We may also try to follow Scholze's approach from \cite{Scholze-p-adic-Hodge-theory} to propose
a strategy to prove a global version of his $p$-adic comparison theorem.
Let $X/\Z$ be a flat scheme over an open subset of $\Spec(\Z)$,
and whose generic fiber $X_\Q/\Q$ is proper and smooth.
One may define a period sheaf $\B^+_{dR}$ on the pro-\'etale site of $X_\Q$ by
$$\B^+_{dR}(U):=\widehat{\DR}^{an}(U_{\A_f}/\A_f).$$
This induces a period sheaf $\B^+_{dR}$ on the pro-\'etale site of $X_{\bar{\Q}}$ by
$$\B^+_{dR}(U):=\widehat{\DR}^{an}(U_{\A_f}/\A_f).$$
One should have an isomorphism
$$
H^*_{proet}(X_{\bar{\Q}},\A_f)\otimes_{\A_f}B^+_{ddR}\overset{\sim}{\longrightarrow}
H^*_{proet}(X_{\bar{\Q}},\B^+_{dR}),
$$
and an isomorphism
$$
H^*_{dR}(X_\Q/\Q)\otimes_\Q B_{ddR}\overset{\sim}{\longrightarrow}
H^*_{proet}(X_{\bar{\Q}},\B^+_{dR})\otimes_{B^+_{ddR}} B_{ddR}
$$
given by a Poincar\'e Lemma similar to the one proved by Scholze in the local setting,
that identifies the global sheaf of constants
$\B^+_{dR}$ to the horizontal sections of the natural connection on $\Oc\B^+_{dR}$.
All this would imply that there exists a natural isomorphism
$$
H^*_{proet}(X_{\bar{\Q}},\A_f)\otimes_{\A_f}B_{ddR}\overset{\sim}{\longrightarrow}
H^*_{dR}(X_\Q,\Q)\otimes_\Q B_{ddR}
$$
compatible with the filtration and the Galois action on Both sides.
Adding the archimedean information to the above reasoning is a quite tempting
task, if one uses the global period sheaf
$$\B^+_{g,ddR}(U):=\widehat{\DR}^{an}(U_\A/\A)$$
and the global period ring
$$B^+_{g,ddR}:=\widehat{\DR}^{an}(\bar{\Zbf}\otimes_{\Zbf}\A/\A).$$
For this naive idea to work, one needs to think of $\Q$ not as a Banach ring
but as a non-strict analytic ring $\Qbf$ over $\Zbf:=(\Z,|\cdot|_\infty)$ given by germs
of functions around the trivial norm $|\cdot|_0\in \Mc(\Zbf)$. One then defines
$\bar{\Qbf}:=\bar{\Zbf}\otimes_\Zbf \Qbf$. This way, it is
meaningful to study the pro-\'etale cohomology of $X_{\bar{\Qbf}}$ with coefficients
in the full ring of ad\`eles, and one may still have a comparison isomorphism
$$
H^*_{proet}(X_{\bar{\Qbf}},\A)\otimes_\A B_{g,ddR}\overset{\sim}{\longrightarrow}
H^*_{dR}(X/\Q)\otimes_\Q B_{g,ddR}.
$$

\begin{remark}
Since Betti cohomology may be nicely computed as analytic motivic cohomology,
and the classical comparison isomorphism is between Betti cohomology and
de Rham cohomology, one may try to generalize this
isomorphism in the motivic direction, by trying to relate (\'etale) motivic cohomology
to a kind of global derived analog of Deligne cohomology (i.e., a motivically graded version
of global multiplicative $K$-theory, that combines global analytic $K$-theory with
the Hodge filtration). This question will be studied later.
\end{remark}

\subsection{Derived dagger algebras}
Let $R$ be a uniform ind-Banach ring. We denote $\Sc$ the $\infty$-category of spaces,
obtained by the $\infty$-localization of the category $\SSets$ of simplicial sets by
weak equivalences.
\begin{definition}
Let $t\in \{an,\dagger\}$ be a type of analytic space.
A \emph{derived $t$-algebra (resp. very strict derived $t$-algebra)} over $R$ is a functor
$$
\begin{array}{c}
A:(\RatAlg^t_R)^{op}\to \Sc\\
\textrm{(resp. }A:(\RatAlg^{t,s}_R)^{op}\to \Sc\textrm{))}
\end{array}
$$
that commutes to finite products and sends pullbacks along
rational domain immersions to pullbacks.
A derived (resp. very strict derived) $t$-algebra over $R$ is called
an \emph{affinoid $t$-algebra} (resp. a very strict affinoid $t$-algebra)
if it is finitely presented, i.e., the finite colimit of a diagram of rational domain $t$-algebras
(resp. strict rational domain $t$-algebras).
We will denote $\DAlg_{R}^t$ (resp. $\DAlg^{t,vs}_{R}$, resp. $\DAff_{R}^t$,
resp. $\DAff_{R}^{t,vs}$) the $\infty$-category of derived $t$-algebras
(resp. very strict derived $t$-algebras, resp. derived affinoid $t$-algebras,
resp. derived very strict affinoid $t$-algebras). The $\infty$-category
$\DAff_R^{t,s}$ of strict affinoid $t$-algebras is defined as the smallest
subcategory of $\DAff_R^t$ that contains (derived) coequalizers
$$
\xymatrix{
C\ar@<0.7ex>[r]^f\ar@<-0.4ex>[r]_g & B\ar[r]	& A}
$$
of two morphisms in $\RatAlg^t_R$, with $B$ strict, and that is stable
by pushouts and retractions. We denote $\DAlg_R^{t,s}:=\ind\DAff_R^{t,s}$ the associated
$\infty$-category of algebras.
\end{definition}

\begin{proposition}
Let $t\in \{an,\dagger\}$ be a non-strict type of analytic spaces.
The $\infty$-category opposite to $\DAff_{R}^t$ (resp. $\DAff_R^{t,vs}$, resp $\DAff_R^{t,s}$),
equipped with the Grothendieck topology generated by standard rational domain coverings
is a geometry in the sense of Lurie \cite{Lurie-DAG-V}, Definition 1.2.5.
For $u\in \{t,\{t,vs\},\{t,s\}\}$, one has
$$\DAlg^u_R=\ind\DAff_R^u.$$
The geometry given by $\DAff_{R}^t$ (resp. $\DAff_R^{t,vs}$) is the geometric
envelope of the pre-geometry given by $\RatAlg^t_R$ (resp. $\RatAlg_R^{t,s}$).
\end{proposition}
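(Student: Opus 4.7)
The plan is to adapt the proof of the corresponding non-derived proposition (about $\Aff^t_R$ being a geometry and the geometric envelope of $\RatAlg^t_R$) to the $\infty$-categorical setting, relying systematically on Lurie's framework from \cite{Lurie-DAG-V} and the colimit completion machinery of \cite{Lurie-higher-topos-theory}, Proposition 5.3.6.2. The key observation is that, by construction, derived $t$-algebras are defined as functors on $(\RatAlg^t_R)^{op}$ that preserve finite products and pullbacks along rational domain immersions, which is exactly the sheaf condition against the existing finite limits in the pre-geometry $\RatAlg^t_R$.

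First, I would establish the identification $\DAlg^u_R\simeq \ind\DAff_R^u$ for each $u\in\{t,\{t,vs\},\{t,s\}\}$. The nontrivial content is that every derived $t$-algebra is a filtered homotopy colimit of finitely presented ones: presenting a general $A\in \DAlg^t_R$ as a (possibly large) colimit of representable functors and exploiting the fact that $\RatAlg^t_R$ has finite products and pullbacks along rational embeddings (already used in the non-derived analog, Proposition \ref{rational-pre-geometries}), one rewrites the colimit as a filtered colimit of finite colimits, i.e., a filtered colimit of affinoid objects. The same argument applies in the very strict and strict cases, using that strict affinoid algebras are by definition closed under the relevant pushouts and retractions.

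Second, to show that $(\DAff^t_R)^{op}$ (resp. its strict variants) is a geometry, I would verify Lurie's axioms from \cite{Lurie-DAG-V}, Definition 1.2.5: existence of finite limits (coming from the fact that $\DAff_R^t$ is closed under finite colimits by construction), stability of admissible morphisms (rational domain embeddings) under pushouts, composition, and two-out-of-three, and generation of the Grothendieck topology by standard rational coverings. The stability statements reduce to the corresponding statements in the pre-geometry $\RatAlg^t_R$ established in Proposition \ref{rational-pre-geometries}; the retraction stability for the strict sub-geometry follows by the same retract argument used there, now interpreted in the $\infty$-category. For the third claim, that $\DAff^t_R$ (resp. $\DAff^{t,vs}_R$) realizes the geometric envelope of $\RatAlg^t_R$ (resp. $\RatAlg^{t,s}_R$) in the sense of \cite{Lurie-DAG-V}, Definition 3.4.9, I would appeal directly to the explicit construction of the envelope via \cite{Lurie-higher-topos-theory}, Proposition 5.3.6.2: freely adjoin finite limits to the pre-geometry while preserving the finite limits already present (products and pullbacks along rational embeddings). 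Our definition of $\DAff^t_R$ as finitely presented derived $t$-algebras is tailored to be exactly this colimit completion in $\Sc$-valued presheaves.

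The main obstacle will be the verification that the strict sub-geometry $\DAff^{t,s}_R$ is genuinely a sub-geometry of $\DAff^t_R$: unlike the very strict case, which arises directly as the envelope of $\RatAlg^{t,s}_R$, the strict case is defined by stability under pushouts and retractions starting from coequalizers with strict middle term, and one must check that admissible morphisms among strict derived affinoid algebras are precisely the strict rational domain embeddings, with the correct stability under pullback inside $\DAff^{t,s}_R$. As in the non-derived case, this should follow once one observes that a strict rational algebra over a strict affinoid algebra remains strict, so that the strict rational embeddings form a class of admissible morphisms in Lurie's sense; the only subtle point is ensuring this closure is compatible with the homotopy coherent structure, which reduces to a coherence check on the finitely many generators of the relations.
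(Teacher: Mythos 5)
Your proposal is correct and follows essentially the same route as the paper's proof: the geometric-envelope claim via the colimit-completion construction of \cite{Lurie-higher-topos-theory}, Proposition 5.3.6.2, the ind-object identification via generation under finite and filtered colimits by rational domain algebras, and the verification that the strict variant is a sub-geometry by reducing the admissibility structure to Proposition \ref{rational-pre-geometries}. The extra coherence caveat you raise for the strict case is a reasonable elaboration but does not change the argument.
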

\begin{proof}
The construction of $\DAff_R^t$ from $\RatAlg^t_R$ and of $\DAff^{t,vs}_R$ from
$\RatAlg^{t,s}_R$ shows that they are given by geometric envelopes,
as described by Lurie in \cite{Lurie-DAG-V}, Lemma 3.4.3 (see also \cite{Lurie-higher-topos-theory},
5.3.6.2).
The statement about ind-objects follows from the fact that the category of small
derived $u$-algebras is generated under the combination of finite colimits and filtered colimits
by rational domain algebras. It remains to check that $\DAff^{t,s}_R$ is indeed a geometry.
By definition, it is stable by pushouts and retractions. Admissible morphisms are given
by strict rational domain algebras, and they indeed form an admissibility structure,
as shown in Proposition \ref{rational-pre-geometries}.
\end{proof}

\begin{definition}
A \emph{derived $t$-analytic scheme} is a scheme for the geometry $\DAff^t_R$ in the sense
of Lurie \cite{Lurie-DAG-V}, Definition 2.3.9. More precisely, it is a $\DAff^t_R$-structured
$\infty$-topos $(\Xc,\Oc_\Xc)$ that is covered by affine $\DAff^t_R$-schemes
(representable ones).
By \cite{Lurie-DAG-V}, Theorem 2.4.1, a scheme corresponds to an $\infty$-stack
$X\in \Shv(\DAlg^t_R,\tau_{\Rat},\Sc)$ that is locally isomorphic to a representable
stack $\Mb(A):=\Map_{\DAlg^t_R}(A,\_)$.
\end{definition}

By replacing the analytic topology by the etale topology on $\DAff^t_R$, one may also
define Deligne-Mumfor derived $t$-analytic stacks.

\subsection{The dagger cotangent complex and derived de Rham cohomology}
\label{derived-de-Rham}
Let $t\in \{\dagger,\{\dagger,s\}\}$ be an overconvergent type of analytic spaces.

One uses the tangent $\infty$-category approach (stabilization
of the overcategory) of
\cite{Lurie-DAG-IV} and \cite{Lurie-higher-algebra}, 7.3,
to define quasi-coherent modules on derived analytic algebras
and derived analytic spaces. This also gives a definition of
the cotangent complex and of the de Rham space $X_{dR}$
associated to a derived dagger analytic space.

\begin{definition}
If $A\in \DAlg^t_R$, we denote $\DAlg^t_{A\backslash}$ the pointed $\infty$-category
with finite colimits whose objects are morphisms $A\to B$. The $\infty$-category
$\Mod(A)$ of modules over $A$ is defined as the tangent $\infty$-category of
$\DAlg^t_R$ at $A$, given by the stabilization
$$\Mod(A)=T_A\DAlg_t:=\Stab(\DAlg^t_{A\backslash}).$$
\end{definition}

\begin{definition}
The right adjoint $\Lb$ to the natural forgetful functor
$$\Mod(A)\to \DAlg^t_{A\backslash}$$
is called the \emph{dagger cotangent complex}, and we denote $\Lb(B)$ by $\Lb_{B/A}$.
\end{definition}

\begin{proposition}
The $\infty$-category $\Mod(A)$ is equipped with a natural symmetric monoidal structure
$\otimes$ with unit object $\1_A$ that makes it a symmetric monoidal $\infty$-category
$(\Mod(A),\otimes,\1_A)$.
\end{proposition}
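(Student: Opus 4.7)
The plan is to apply Lurie's construction of symmetric monoidal structures on tangent $\infty$-categories, as developed in \cite{Lurie-higher-algebra}, Section 7.3. The essential input is that $\DAlg^t_R$ is a presentable $\infty$-category arising as models of a Lawvere-style theory (the $0$-truncated geometric envelope of $\RatAlg^t_R$, extended to the derived setting), and such categories admit a well-behaved notion of ``commutative $A$-algebra'' for any object $A$, given by the slice $\infty$-category $\DAlg^t_{A\backslash}$.

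First, I would equip $\DAlg^t_{A\backslash}$ with its natural symmetric monoidal structure given by pushout over $A$: for $A \to B$ and $A \to C$ in $\DAlg^t_R$, the tensor product is the derived pushout $B \sqcup_A C$, which exists since $\DAlg^t_R$ has small colimits (the derived analog of Proposition \ref{indization-commute-colimits}) and which makes $\DAlg^t_{A\backslash}$ into a presentable symmetric monoidal $\infty$-category whose tensor product commutes with colimits separately in each variable. The unit of this structure is the identity map $A \to A$, viewed as the initial object of the slice.

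Second, I would invoke the general result that the stabilization of a pointed presentable symmetric monoidal $\infty$-category (with tensor product preserving colimits in each variable) inherits a canonical symmetric monoidal structure, cf.\ \cite{Lurie-higher-algebra}, Theorem 7.3.4.13 and the surrounding discussion of the tangent bundle of an $\infty$-operad. Applied to the pointed symmetric monoidal $\infty$-category $\DAlg^t_{A\backslash}$ (pointed by its initial-terminal object $A$), this yields a symmetric monoidal structure on
$$\Mod(A) = \Stab(\DAlg^t_{A\backslash})$$
whose unit $\1_A$ is the image of $A$ under the suspension spectrum functor $\Sigma^\infty_+ : \DAlg^t_{A\backslash} \to \Stab(\DAlg^t_{A\backslash})$, i.e., the object corresponding under square-zero extension to $A$ itself regarded as a free rank-one module over itself.

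The main obstacle will be verifying the technical hypotheses needed to pass from the slice category to its stabilization while retaining the symmetric monoidal structure: one must check that pushout along $A$ preserves finite colimits separately in each variable (which reduces to the compatibility of pushouts of derived dagger algebras with filtered colimits and with the tangent/excisive approximation) and that the resulting tensor product on $\Mod(A)$ is compatible with the cotangent complex adjunction $\Lb \dashv (\text{forget})$. An alternative route, which may be more transparent in our analytic setting, is to construct the tensor product by hand via the free-forgetful adjunction: one defines $M \otimes_A N$ as the stabilization of $A \oplus M \otimes_{A \oplus A} A \oplus N$ (where the relevant pushouts are taken in $\DAlg^t_{A\backslash}$) and checks associativity and symmetry directly using the universal properties of square-zero extensions. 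Either approach requires some care about the interaction between the overconvergent structure and the abstract operadic machinery, but no genuinely new analytic input beyond what has already been established for $\DAlg^t_R$.
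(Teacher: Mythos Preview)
Your approach is correct in spirit and is the standard abstract route, but it differs from the paper's argument in an interesting way.

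You invoke Lurie's machinery from \cite{Lurie-higher-algebra}, Section 7.3, to transport the coproduct symmetric monoidal structure on $\DAlg^t_{A\backslash}$ through stabilization. The paper instead works by hand: it defines the tensor product of two modules $M$, $N$ (with corresponding spectral $A$-algebras $B_M$, $B_N$) directly via the cofiber
$$B_{M\otimes N}:=\cof(B_M\oplus B_N\to B_M\otimes_A B_N),$$
which is exactly the smash-product formula one would extract from the abstract theory, and then constructs the unit $\1_A$ explicitly. For a rational domain algebra $A$ the paper writes $\1_A$ as a concrete affinoid quotient $A\{\Alg(A)\}^t/(\cdots)$, where the set $\Alg(A)$ of generators is graded by the uniform norm on $\Mc(A)$; for general $A$ one takes a colimit over a presentation by rational domain algebras.

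What each buys: your argument is cleaner and makes the coherence of the symmetric monoidal structure a formal consequence of established results, at the cost of leaving the unit object somewhat implicit (you identify it as $\Sigma^\infty_+ A$). The paper's argument gives a concrete analytic description of $\1_A$ that visibly lives in $\Aff^t_R$ and uses the Berkovich spectrum to produce the correct grading---information that matters for the later explicit constructions (perfect complexes, de Rham cohomology)---but it is correspondingly sketchier on the higher coherence of $\otimes$. Your ``alternative route'' in the last paragraph is in fact very close to what the paper actually does.
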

\begin{proof}
The tensor product of two modules $M$ and $N$ with corresponding spectral $A$-algebras
$B_M$ and $B_N$ is defined as the cofiber
$$B_{M\otimes N}:=\cof(B_M\oplus B_N\to B_M\otimes_A B_N).$$
If $A$ is a $t$-rational $R$-algebra,
we will denote $\1_A$ the module over $A$ given by the $t$-affinoid $R$-algebra
$$\1_A:=A\{\Alg(A)\}^t/((\{a,\;a\in \Alg(A)\})^2,(a([b]+[c])-[a(b+c)],a\in \Alg(A),\;b\in \Alg(A))),$$
where the set $\Alg(A)$ is equipped with the grading given by the uniform norm
on $\Mc(A)$, which is well defined since $\Mc(A)$ is compact.
If $A$ is a derived $t$-analytic $R$-algebra, we may write it as a colimit
$$A=\colim_i A_i$$
of $t$-rational $R$-algebras $A_i$, and we define $\1_A$ as the colimit of the
corresponding modules $\1_{A_i}$. The above binary tensor product operation
extends naturally to a symmetric monoidal $\infty$-category structure with
unit object $\1_{A}$ (model of the Lawvere theory of commutative monoids in
the $\infty$-category ${}^\infty\Cat^{pr}$ of $\infty$-categories).
\end{proof}

\begin{definition}
Let $A$ be a derived $t$-analytic algebra. The symmetric monoidal $\infty$-category
$(\Perf(A),\otimes,\1_\A)$ of \emph{perfect complexes} over $A$ is the symmetric
monoidal stable sub-$\infty$-category of $\Mod(A)$ generated by $\1_A$. 
\end{definition}

One must be careful in extending the definition of the cotangent complex of a morphism
of dagger algebras to the geometric situation of a morphism of derived dagger analytic
spaces $f:X\to Y$.
Actually, even for the spectrum $f:X=\Mb(A)\to \Mb(R)=Y$ of an $R$-affinoid $t$-analytic
algebra, one needs to define the category of quasi-coherent modules on $X$ in a local way
for the $G$-topology, as was pointed out to the author, on a  $p$-adic example due to Gabber
by Brian Conrad (see \cite{Conrad-relative-ampleness-rigid-analytic}, Remark 2.1.5
and Example 2.1.6).
\begin{definition}
Let $X$ be a $t$-analytic space over $R$. The category $\QCoh(X)$ of quasi-coherent
modules over $X$ is defined as the (opposite) tangent category
$$\QCoh(X)^{op}:=T_X(\An^t)^{op}$$
to the category of analytic spaces at $X$, defined as the category of abelian
co-group objects in the category of morphisms $f:Y\to X$.
Similarly, if $X$ is a derived $t$-analytic space over $R$, the derived category
$\DQCoh(X)$ of quasi-coherent modules over $X$ is defined as the (opposite) $\infty$-tangent
category
$$\DQCoh(X)^{op}:=T_X(\DAn^t)^{op}$$
to the category of derived analytic spaces at $X$, defined as the stabilization
of the category of morphisms $f:Y\to X$.
The cotangent complex functor $\Lb$ is given by the adjoint of the forgetful functor
$$\DQCoh(X)\to \DAn^t.$$
We denote the module $\Lb(Y)$ by $\Lb_{Y/X}$. 
\end{definition}

We may still define the symmetric monoidal $\infty$-category of perfect complexes
in this geometric situation.
\begin{proposition}
The $\infty$-category $\DQCoh(X)$ is equipped with a natural symmetric monoidal
structure $\otimes$ with unit object denoted $\1_X$. The symmetric monoidal
stable $\infty$-category generated by $\1_X$ is denoted $(\Perf(X),\otimes,\1_X)$.
Every object of $\Perf(X)$ is strongly dualizable for the monoidal structure.
\end{proposition}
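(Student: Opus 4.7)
The plan is to glue the local symmetric monoidal structure on modules over a derived affinoid $t$-algebra, already constructed in the previous proposition, to obtain a symmetric monoidal structure on $\DQCoh(X)$. First I would note that a derived $t$-analytic space $X$ admits a presentation as a colimit $X = \colim_i \Mb(A_i)$ along rational domain embeddings, and that for each such $A_i$ the tangent $\infty$-category $T_{A_i}\DAlg^t_R \simeq \Mod(A_i)$ carries the symmetric monoidal structure $(\otimes, \1_{A_i})$ with unit module $\1_{A_i}$ constructed via cofibers of coproducts of pointed algebras. Since for a rational domain embedding $\Mb(B)\hookrightarrow \Mb(A)$ the base change functor $\Mod(A)\to \Mod(B)$ (given by pushout in $\DAlg^t_R$) is a symmetric monoidal left adjoint, the presheaf $\Mb(A)\mapsto \Mod(A)$ of symmetric monoidal $\infty$-categories on $X_G$ descends to the desired symmetric monoidal structure on $\DQCoh(X)$ by the limit description of quasi-coherent sheaves, with unit $\1_X$ obtained by gluing the local units $\1_{A_i}$.

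Next I would define $(\Perf(X),\otimes,\1_X)$ as the smallest stable full sub-$\infty$-category of $\DQCoh(X)$ containing $\1_X$ and closed under finite colimits and retracts; it inherits the symmetric monoidal structure from $\DQCoh(X)$ because the tensor product of two objects obtained from $\1_X$ by finitely many (co)fiber sequences and retracts is again such an object (the tensor product being bi-exact and preserving retracts). This yields the symmetric monoidal stable $\infty$-category generated by $\1_X$.

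For dualizability, I would argue by induction on the cellular presentation. The unit $\1_X$ is strongly dualizable (with itself as dual), and the full subcategory of strongly dualizable objects in any symmetric monoidal stable $\infty$-category is stable under finite colimits, shifts, and retracts: indeed, the dual of a cofiber is (up to shift) the fiber of the duals, the dual of a direct sum is the direct sum of the duals, and a retract of a dualizable object is dualizable. Since $\Perf(X)$ is by construction the thick subcategory generated by $\1_X$, every object of $\Perf(X)$ is strongly dualizable.

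The main obstacle I anticipate is checking that the local-to-global gluing of the symmetric monoidal structures is well-behaved: one must verify that the diagram $\Mb(A)\mapsto (\Mod(A),\otimes,\1_A)$ on $(X_G,\tau_{\Rat})$ is actually a sheaf of symmetric monoidal $\infty$-categories. This requires Tate-type acyclicity for modules (flat descent along standard rational coverings), which in turn reduces, via the presentation of a derived algebra as a colimit of rational domain algebras, to the ordinary Tate acyclicity theorem proved earlier in the text. The subtlety, noted by Conrad in the rigid setting, is that one cannot use the naive category of $\ib(A)$-modules but must insist on the local definition via the tangent $\infty$-category, which is exactly what our formulation enforces.
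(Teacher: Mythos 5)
Your proof reaches the right conclusions but takes a genuinely different route to the monoidal structure than the paper. The paper's argument is a one-liner: it transports the affine construction verbatim to the tangent $\infty$-category $T_X(\DAn^t)$ --- the tensor product of two objects is the cofiber of the map from their direct sum to their fibre product over $X$, and the unit is produced by the same colimit recipe as $\1_A$ --- and then observes that dualizability of every object of $\Perf(X)$ follows because $\1_X$ is the unit of a stable monoidal structure; this is exactly your thick-subcategory argument (dualizable objects form a thick subcategory containing the unit), which you spell out correctly and in more detail than the paper does. For the first assertion, however, you build the monoidal structure by descent, gluing $(\Mod(A_i),\otimes,\1_{A_i})$ along rational domain embeddings. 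That route buys a genuine local-to-global principle and makes the Conrad--Gabber subtlety explicit, but it requires two inputs rather than the one you flag: besides the sheaf condition for $A\mapsto(\Mod(A),\otimes)$ on the rational topology (which you correctly identify as the main obstacle and reduce to Tate acyclicity), you also need to know that the resulting limit $\lim_i\Mod(A_i)$ agrees with the paper's actual definition of $\DQCoh(X)$ as the stabilization $T_X(\DAn^t)^{op}$ of the overcategory at $X$. The ``limit description of quasi-coherent sheaves'' you invoke is not established anywhere in the paper and is a nontrivial comparison (it is the kind of statement proved by Porta in the complex-analytic setting). If you grant that comparison, your argument is complete; without it, the descent construction produces a symmetric monoidal category, but not visibly a symmetric monoidal structure on the object the proposition is about.
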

\begin{proof}
The same constructions work as in the situation of analytic algebras.
The fact that every perfect complex is strongly dualizable follows from
the fact that $\1_X$ is the unit object for the stable monoidal structure.
\end{proof}

\begin{example}
If $(R,|\cdot|_0)$ is a trivially seminormed integral ring and if we work with strict
analytic spaces, then we get back the usual cotangent complex of algebraic
geometry defined originally by Illusie in his thesis \cite{Illusie-cotangent},
and the usual notion of perfect complexes on an $R$-scheme.
\end{example}

One may also define derived de Rham cohomology of dagger analytic spaces.
\begin{definition}
Let $f:X\to Y$ be a morphism of derived analytic spaces and $\Lb_{X/Y}$ be the
corresponding cotangent complex. The derived de Rham complex (resp.
completed derived de Rham complex) is defined by
$$
\begin{array}{c}
\Omega^*_{X/Y}			 :=	 \left|\wedge^*\,\Lb_{X/Y}\right|\\
\left(\textrm{resp. }\widehat{\Omega}^*_{X/Y}	 :=	 \left|\wedge^*\,\Lb_{X/Y}\right|_*\right),
\end{array}
$$
where the exterior product is meant as the derived exterior product in the derived category
of $\Oc_X$-modules and the sign $|\cdot|$ (resp. $|\cdot|_*$) means the totalization
(resp. the product totalization) of the bicomplex.
The derived de Rham (resp. Hodge completed derived de Rham) cohomology
of $X$ over $Y$ is given by
$$
\begin{array}{c}
\DR^*(X/Y):=\R\Gamma(X,\Omega^*_{X/Y})\\
\left(\textrm{resp. }\widehat{\DR}^*(X/Y):=\R\Gamma(X,\widehat{\Omega}^*_{X/Y})\right).
\end{array}
$$
\end{definition}

The derived and completed derived de Rham complexes are equipped with natural commutative
algebra structures.

\begin{remark}
In characteristic $0$, it is explained by Bhatt
in \cite{Bhatt-derived-de-Rham}, Remark 2.6, that derived de Rham cohomology is trivial
in the algebraic setting, and that one really needs to pass to the Hodge completed setting
to get back usual de Rham cohomology in the case of schemes
(this is another important result due to Bhatt \cite{Bhatt-completed-derived-de-Rham}).
However, the non-completed version plays a central role in the definition of
refined period rings, such as $A_{cris}$, so that one really needs to use it.
\end{remark}

\section{Chern characters and global regulators}
\label{Chern-character}
We will now use the formalism of Toen and Vezzosi \cite{Toen-Vezzosi-Chern}
for the cyclic Chern character, and adapt it to higher $K$-theory by getting
inspiration from Blanc's thesis \cite{Blanc-K-theorie-topologique}.
The fact that the Chern character in cyclic homology is \emph{integral}
(explained to the author by Gregory Ginot) is our main motivation for
working on its adaptation to our global analytic setting, with the aim
of defining various local and global ``regulator type'' maps.

\subsection{Analytic Waldhausen $K$-theory and cyclic homology}
For the following constructions to work with integral coefficients,
we need a notion of analytic $K$-theory that is not necessarily homotopy
invariant, so that neither the usual Karoubi-Villamayor approach
\cite{Karoubi-Villamayor}, neither the dagger $\Db^1$-homotopical
approach described in Section \ref{global-analytic-motives} will give us
what we need. We will thus use the Waldhausen approach explained
on the nlab contributive website (following \cite{Toen-Vezzosi-K-theory};
see also \cite{Lurie-higher-algebra}, Remark 11.4).

First recall that from a stable $\infty$-category, one may define
the associated Waldhausen $K$-theory by the following definition.

\begin{definition}
Let $\Cc$ be an $\infty$-category. The \emph{core} of $\Cc$ is the maximal
sub-$\infty$-groupoid $\Core(\Cc)$ of $\Cc$. The $n$-gap of $\Cc$ is the
full sub-$\infty$-category $\Gap(\Cc^{\Delta^n})$ of $\Func(\Arr(\Delta^n),\Cc)$
on those objects $F$ for which
\begin{itemize}
\item the diagonal $F(n,n)$ is inhabited by zero objects, for all $n$;
\item all diagrams of the form
$$
\xymatrix{
F(i,j)\ar[d]\ar[r]	& F(i,k)\ar[d]\\
F(j,j)\ar[r]		& F(j,k)}
$$
is an $\infty$-pushout.
\end{itemize}
The (connective) $K$-theory spectrum of an $\infty$-category $\Cc$ with pushouts
is defined by
$$\Kbf(\Cc):=\colim \Core(\Gap(\Cc^{\Delta^n})).$$
The universal completion of the functor
$$\Kbf:{}^\infty\Cat^{st}\to \Sp$$
from stable $\infty$-categories to spectra that sends homotopy cofibers of stable $\infty$-category
to homotopy cofibers of spectra is the corresponding unconnective $K$-functor
$$\Kbf^{nc}:{}^\infty\Cat^{st}\to \Sp.$$
\end{definition}

If we have a geometric derived analytic stack $X$, we will be interested by
the associated monoidal $\infty$-category $(\Perf(X),\otimes)$ of perfect complexes,
and the corresponding $K$-theory spectrum
$$\Kbf(X):=\Kbf(\Perf(X)).$$

\begin{remark}
\label{devissage-K-theory}
In our global analytic setting, it may be necessary to give a refined definition of $K$-theory,
e.g., using simplicial methods \`a la Karoubi-Villamayor \cite{Karoubi-Villamayor}, or
motivic analytic methods like in our paper, to get the following property:
if $X$ is a nice strict analytic space over $\Zbf=(\Z,|\cdot|_\infty)$
(e.g. the one associated to a projective smooth scheme), and we denote
$\Zbf_0=(\Z,|\cdot|_0)$, then the double inclusion
$$U=\{|2|\leq |1|\}\subset X\supset \{|2|>1\}=Z$$ induces
an exact triangle
$$\Kbf(X_\Rbf)\to \Kbf(X_\Zbf)\to \Kbf(X_{\Zbf_0}),$$
that relates global analytic $K$-theory
of $X$ to algebraic $K$-theory of the underlying scheme and to
topological $K$-theory of the associated analytic space over $\Rbf$.
It is also possible that working only with perfect modules will not be enough,
since the above decomposition is sub-analytic.
\end{remark}

Let ${}^\infty\Mon$ be the $\infty$-category of $\infty$-monoids, given
by models of the Lawvere theory $(\Tc_{Mon},\times)$ of commutative monoids (category
with finite products opposite to that of free finitely generated commutative monoids)
with values in the $\infty$-category ${}^\infty\Grpd$ of $\infty$-groupoids
(i.e., simplicial sets up to weak equivalences).
Let $t\in \{an,\{an,s\},\dagger,\{\dagger,s\}\}$ be a type of analytic spaces.
\begin{definition}
The \emph{Hochshild homology pre-stack}
$$\HH_{pr}:\DAn^t_R\to {}^\infty\Mon$$
is defined on the $\infty$-category of derived $t$-analytic spaces over $R$
by sending an analytic space $X$ to the $\infty$-monoid $\End_{\Perf(X^{S^1})}(\1)$
of endomorphisms of the unit object in the $\infty$-symmetric monoidal category of
perfect complexes on the derived loop space $X^{S^1}$ of $X$.
The associated stack is called the \emph{Hochshild homology stack} and denoted
$$\HH:\DAn^t_R\to {}^\infty\Mon.$$
\end{definition}

The Hochshild homology pre-stack
$$\HH_{pr}:\DAn^t_R\to {}^\infty\Mon$$
has actually a natural lifting
$$\widetilde{\HH}_{pr}:\DAn^t_R\to \R\uHom(BS^1,{}\infty\Mon)=:S^1-{}^\infty\Mon$$
to the $\infty$-category of $S^1$-equivariant $\infty$-monoids.
The associated stack also gives
$$\widetilde{\HH}:\DAn^t_R\to S^1-{}^\infty\Mon.$$
\begin{definition}
The functors obtained by composing the above lifting of the Hochshild homology
functors with the functor of homotopy fixed point
$$(\cdot)^{hS^1}:=\lim_{BS^1}:S^1-{}^\infty\Mon\to {}^\infty\Mon,$$
defines two $\infty$-functors
$$
\begin{array}{c}
\HC_{pr}^{neg}:\DAn^t_R\to {}^\infty\Mon\\
\HC^{neg}:\DAn^t_R\to {}^\infty\Mon
\end{array}
$$
called respectively the \emph{negative cyclic pre-stack and negative cyclic stack}.
Similarly, composing with the $\infty$-functor of homotopy coinvariants
$$(\cdot)_{hS^1}:=\colim_{BS^1}:S^1-{}^\infty\Mon\to {}^\infty\Mon,$$
defines two new $\infty$-functors
$$
\begin{array}{c}
\HC_{pr}:\DAn^t_R\to {}^\infty\Mon\\
\HC:\DAn^t_R\to {}^\infty\Mon
\end{array}
$$
called the \emph{cyclic homology pre-stack and the cyclic homology stack}.
\end{definition}

There is actually an equivalence (see \cite{Blanc-K-theorie-topologique}, Section 2.1)
of $\infty$-categories
$${}^\infty\Mon\overset{\sim}{\to} \Sp^{con}$$
between the $\infty$-category of $\infty$-monoids and the $\infty$-category $\Sp^{con}$ of
connective spectra, that we may use to define the cyclic homology spectra of an analytic space,
\begin{definition}
Let $X$ be a $t$-analytic space. The cyclic homology spectra of $X$ are defined as
the spectra $\HCbf^{neg}(X)$ and $\HCbf(X)$ associated to the $\infty$-monoids
$\HC^{neg}(X)$ and $\HC(X)$.
\end{definition}


\subsection{The cyclic Chern character}
We want to adapt Toen-Vezzosi's construction of the Chern character
from \cite{Toen-Vezzosi-Chern} to get morphisms of spectra
$$\ch:\Kbf(X)\to \HCbf^{neg}(X)$$
and
$$\ch:\Kbf(X)\to \HCbf(X).$$
that induce the usual Chern character (for the negative one)
if we work over a regular Banach ring of characteristic $0$.

This may be easily extended to logarithmic analytic spaces if one uses the
tangent category definition for their category of perfect complexes, as
we did for classical analytic spaces.

To show that Toen and Vezzosi's work extends to a Chern character
$$\ch:\Kbf(X)\to \HCbf^{neg}(X)$$
we need to show that
the trace morphism may be extended to every $\Gap(\Cc^{\Delta^n})$
in a compatible way with the simplicial maps. This should follow from the
derived additivity of traces alluded to in the introduction of loc. cit., Subsection 2.4:
one has to show that the cyclic trace $\Tr^{S^1}$ restricted to a stable symmetric
monoidal $\infty$-category is compatible with exact triangles.


We leave the details of these constructions for a later publication.
We will finish by discussing in a somewhat imprecise
way a possible application of our formalism to the definition of an integral version
of rational Arakelov motivic cohomology \cite{Arakelov-motivic-cohomology-I}.
\begin{remark}
\label{Arakelov-motivic-cyclic}
Recall that we denoted
$\Zbf_0:=(\Z,|\cdot|_0)$, $\Zbf:=(\Z,|\cdot|_\infty)$ and $\Rbf=(\R,|\cdot|_\infty)$.
In this remark, we will work with the analytic etale topology, and in particular, etale $K$-theory,
and we will only consider projective schemes (logarithmic methods are necessary for more general
ones). We suppose given a good notion of analytic $K$-theory, such as the one proposed
in Remark \ref{devissage-K-theory}.
We may now continue the discussion of Remark \ref{Arakelov-motivic} on a possible
global analytic version of Arakelov motivic cohomology. The existence of an integral
Chern character with values in cyclic homology allows us to define an analog of Deligne
cohomology for a proper smooth analytic space $X$ over any base Banach
ring $R=(R,|\cdot|_R)$, defined as the homotopy fiber
$$
\Kbf_\Dc(X)\longrightarrow
\Kbf(X)\overset{\ch}{\longrightarrow} \HCbf(X).
$$
One has natural $\lambda$-operations on $\Kbf_\Dc(X)$ and a natural $\Kbf(X)$-module
structure
$$
\Kbf(X)\times \Kbf_\Dc(X)\to \Kbf_\Dc(X)
$$
that comes from the fact that $\Kbf(X)\to \HCbf(X)$ is a ring morphism.
We may try to use constructions of this kind to define an \emph{integral Arakelov motivic $K$-theory}
$\widehat{\Kbf}(X_\Zbf)$ fulfilling the following conditions:
\begin{enumerate}
\item there is a module structure
$$\Kbf(X_{\Zbf_0})\times \widehat{\Kbf}(X_\Zbf)\to \widehat{\Kbf}(X_\Zbf)$$
under algebraic $K$-theory
\item that induces by composition with the natural projection
$\widehat{\Kbf}(X_\Zbf)\to \widehat{\Kbf}(\Zbf)$ an integrally defined
$\Lambda$-filtered pairing
$$\Kbf(X_{\Zbf_0})\times \widehat{\Kbf}(X_\Zbf)\to \widehat{\Kbf}(\Zbf).$$
\item The determinant of the associated $\Lambda$-graded group
$\gr_\Lambda \widehat{\Kbf}(X_\Zbf)$ tensored with $\Q$ would be isomorphic to
$$
\det_\Q(H^*_{mot}(X,\Q))\otimes \det_\Q(H^*(X_\Rbf,\Q)) \otimes {\det_\Q}^{-1}(H^*_{dR,fil}(X_\Q/\Q)).
$$
\end{enumerate}
The main interest of our new methods is that they would allow us to avoid the introduction
of denominators in the definition of the fundamental pairing used by Scholbach in his approach
\cite{Scholbach-special-L-values} to special $L$-values, so that one may hope
the above pairing or the associated graded integral pairing
$$
\gr_\Lambda \Kbf(X_{\Zbf_0})\times \gr_\Lambda\widehat{\Kbf}(X_\Zbf)\to
\gr_\Lambda\widehat{\Kbf}(X_\Zbf)
$$
to be related to the special values of the $L$-function of $X$ up to $\Z^\times=\{\pm 1\}$.
This picture may be too optimistic, but the idea of using a derived global analytic and
cyclic version of the Chern character to avoid the introduction of denominators in the theory of
special $L$-values certainly deserves further attention.
\end{remark}

\bibliographystyle{alpha}
\bibliography{$HOME/Documents/travail/fred}

\end{document}